\documentclass[10pt]{amsart}

\usepackage{hyperref}

\usepackage{amsmath}
\usepackage{amssymb}
\usepackage{amsthm}
\usepackage{thmtools}

\usepackage{tikz-cd}
\usepackage[utf8]{inputenc}
\usepackage{lipsum}
\usepackage{stmaryrd}
\usepackage[bb=libus]{mathalpha}

\DeclareSymbolFont{textoperators}{OT1}{\familydefault}{m}{n}
\SetSymbolFont{textoperators}{bold}{OT1}{\familydefault}{b}{n}
\makeatletter
\renewcommand{\operator@font}{\mathgroup\symtextoperators}
\makeatother

\usepackage[marginpar]{todo}

\usepackage[%
    backend=bibtex, 
    sortcites, 
    sorting=nty, 
    safeinputenc, 
    citestyle=alphabetic, %
    bibstyle=alphabetic, %
    hyperref=true, 
    maxbibnames=3, 
    maxcitenames=3, 
    url=true, 
    doi=false, 
    giveninits=true,
    ]%
{biblatex}
\usepackage[margin=3cm]{geometry}

\usepackage[capitalize]{cleveref}

\newtheorem{theorem}{Theorem}[section]
\newtheorem{proposition}[theorem]{Proposition}
\newtheorem{lemma}[theorem]{Lemma}
\newtheorem{corollary}[theorem]{Corollary}

\theoremstyle{definition}
\newtheorem{definition}[theorem]{Definition}
\newtheorem{notation}[theorem]{Notation}
\newtheorem{example}[theorem]{Example}

\theoremstyle{remark}
\newtheorem{remark}[theorem]{Remark}

\let\emptyset\varnothing

\newcommand{\ourspace}[1]{\mathcal{M}^{\mathbb{k}}_{#1}}

\begin{document}

\author{Ulrich Bauer}

\author{Cameron Gusel}

\author{Luis Scoccola}

\title[A metrically complete and Krull--Schmidt space of persistence modules]{A metrically complete and Krull--Schmidt space of multiparameter persistence modules}

\begin{abstract}
We show that the observable category of q-tame multiparameter persistence modules satisfies good metric and algebraic properties: it forms a complete metric space with respect to the interleaving distance,
and it is Krull--Schmidt in the sense that every object admits an essentially unique decomposition into indecomposables.
Moreover, we show that these metric and algebraic structures are compatible: two objects are at distance zero if and only if they are isomorphic.
We argue that the observable category of q-tame multiparameter persistence modules is the right setup for multiparameter persistence by showing that many of the categories already considered in the literature form full subcategory of this category.
We also characterize precompact sets in terms of finite representation type of certain discretizations, and show that the image of several of the main constructions in multiparameter persistence is precompact.
\end{abstract}

\maketitle

\tableofcontents

\section{Introduction}

\subsection{Motivations and context}

A central concept in topological inference and topological data analysis is persistent homology, which arises from applying homology to a filtration of topological spaces indexed by one or more parameters.
The output is a \emph{persistence module}, that is, a functor from the indexing poset $\mathbb{R}^n$ to the category of vector spaces; or, in the language of representation theory, a representation of the poset $\mathbb{R}^n$.
\emph{Persistence theory} is distinct from the representation theory of posets, since it studies the algebraic properties of persistence modules as well as their metric properties, enabled by the existence of a well-behaved distance between persistence modules, known as the \emph{interleaving distance} \cite{chazal-cohen-steiner-glisse-guibas-oudot,lesnick}.
Applications of persistence theory span a range of scientific disciplines such as
neuroscience and biology
\cite{schneider-et-al,gardner-et-al,rizvi-et-al,benjamin-et-al},
as well as pure mathematics, specifically in geometry and analysis
\cite{biran-cornea-zhang,shelukhin,polterovich-shelukhin,usher-zhang,buhovsky-et-al,buhovsky-2,polterovich-rosen-samvelyan-zhang}.

\subsection*{One-parameter persistence}
The theory of \emph{one-parameter persistence} (i.e., the case $n=1$) is well-developed.
Commonly, the parameter of the filtration is interpreted as a geometric scale, and the resulting one-parameter persistence modules are used to identify topological features of the data that persist over a range of scales.
Applications of one-parameter persistence in both science and pure mathematics rely on two fundamental results:
\begin{itemize}
  \item[(R1)] The decomposition theorem (also known as the structure theorem), which states that one-parameter persistence modules decompose uniquely into interval modules \cite{crawleyboevey14}.
  Such intervals form the \emph{barcode} of the module.
  \item[(R2)] The stability theorem (also known as the isometry theorem), which states that the interleaving distance between one-parameter persistence modules equals a certain optimal matching distance (the \emph{bottleneck distance}) between their barcodes \cite{chazal,bauer-lesnick}.
\end{itemize}
Neither of these results hold for arbitrary one-parameter persistence modules; some tameness assumptions are required, which led to increasingly more general versions (finitely presented \cite{zomorodian-carlsson,cohen-steiner-edelsbrunner-harer,chazal-cohen-steiner-glisse-guibas-oudot}, pointwise finite-dimensional \cite{crawleyboevey14,bauer-lesnick}, q-tame \cite{chazal}).

\subsection*{The observable category of q-tame modules}
The \emph{observable category of q-tame modules} \cite{chazal} provides
the most general framework in which strong algebraic and metric theories have been developed.
Briefly, a \emph{q-tame} (potentially multiparameter) persistence module is one for which the structure maps between indices with strict inequality in all components have finite rank;
virtually all relevant examples of persistence modules arising in topological data analysis are q-tame.
The letter ``q'' in ``q-tame'' stands for ``quadrant''; see \cite{chazal} for an explanation of this terminology in the one-parameter case.
The \emph{observable category} is obtained by taking a quotient by the \emph{ephemeral modules}, that is, 
those whose structure maps between indices with strict inequality in all components are zero,
or equivalently, those that are at interleaving distance zero from the zero module \cite{harsu2024}.

The observable category of q-tame one-parameter persistence modules has the following crucial properties:
\begin{itemize}
  \item[(P1)] The category is Abelian and Krull--Schmidt, in the sense that every object decomposes as a direct sum of indecomposables in an essentially unique way.
  \item[(P2)] Two persistence modules are isomorphic if and only if they are at interleaving distance zero.
  \item[(P3)] The extended metric induced by the interleaving distance is complete, in the sense that every Cauchy sequence of persistence modules has a limit.
\end{itemize}
For references, see \cite[Theorems~3.9~and~4.5]{chazal} for (P1) and (P2) and \cite[Theorem~7]{bubenik-vergili} for (P3).
Together, these properties provide a theory of persistence modules in which both algebraic (P1) and geometric (P3) techniques can be applied in a compatible way (P2).


It may not be clear at first why requiring q-tameness
and passing to the observable category are necessary.
We give some examples to illustrate this point.
Without passing to the observable category, property (P2) is clearly not satisfied; for example, the interval modules corresponding to the intervals $(0,1)$ and $[0,1]$ are at interleaving distance zero, yet not isomorphic.
Even in the observable setting, both properties (P1) and (P2) fail without the assumption of q-tameness; see \cref{example:no-decomposition} and \cite[Examples~4.6]{chazal}, respectively.
In the collection of all (potentially multiparameter) persistence modules, the closure of the collection pointwise finite-dimensional modules is the collection of q-tame modules, which further motivates the notion of q-tameness as a way to also achieve property (P3).

Importantly, the proofs of (P1) and (P2) referred to above rely on results (R1) and (R2), which do not generalize to multiparameter persistence modules, as we now discuss.

\subsection*{Multiparameter persistence}
The one-parameter persistence approach to topological inference is robust to small perturbations of the data, caused by sampling and bounded noise; however, it is not robust to unbounded noise such as outliers.
To address this issue, the use of \emph{multiparameter persistence} ($n\geq 2$) has been proposed \cite{carlsson-zomorodian}, where the additional parameters can be used to control the influence of outliers \cite{blumberg2022stability2parameterpersistenthomology} or to encode additional information about the data \cite{carlsson-zomorodian}; see \cite{botnan23} for an introduction.

From the algebraic viewpoint, multiparameter persistence is significantly more complicated in both theory and practice.
On the theoretical side, result (R1) does not generalize, in the sense that the non-linear posets $\mathbb{R}^n$ with $n\geq 2$ have wild representation type \cite{nazarova} (meaning that their indecomposable representations cannot be effectively classified), and (R2) fails as well \cite{bauer23}, since indecomposables form a dense set in the interleaving distance.
On the practical side, the interleaving distance is NP-hard to compute \cite{bjerkevik-botnan-kerber}, and decomposition into indecomposables is not, as of yet, known to be computable in cubic time.
Nevertheless, several recent works have made substantial progress regarding the
structure \cite{botnan23,botnan-oppermann-oudot,brustle-oudot-scoccola-thomas},
stability \cite{blumberg2022stability2parameterpersistenthomology,botnan-oppermann-oudot-scoccola},
and computability of multiparameter persistence \cite{lesnick-wright,morozov-scoccola,dey-jendrysiak-kerber}.

\subsection*{Tameness assumptions}
Mimicking the history of one-parameter persistence, the above-mentioned foundational works in multiparameter persistence typically assume that the modules in consideration are finitely presented or otherwise parametrized by discrete posets.

Although finitely presented multiparameter persistence modules satisfy properties (P1) and (P2), they do not satisfy property (P3) and are 
a quite restrictive class of modules, and indeed more restrictive than in the one-parameter case: for example, while a classical Morse function on a compact manifold has finitely presented sublevel set one-parameter persistent homology, the sublevel set two-parameter persistent homology of a bivariate Morse function is usually not finitely presented
(see, e.g., the examples in \cite{budney-kaczynski}).

A more general notion of tameness for multiparameter persistence is that of Miller \cite{miller}, which we call \emph{m-tame}, to avoid confusion with other notions of tameness (e.g., \cite{chacholski-et-al,chacholski-et-al-2}).
An \emph{m-tame} $n$-parameter persistence module is a functor $M : \mathbb{R}^n \to \mathbf{Vect}$ that admits a finite upset presentation (that is, such that $M \cong \operatorname{cok}(Q \to P)$, with $P$ and $Q$ decomposing as indicator modules of upsets); see \cite[Theorem~6.12(4)]{miller}.
The notion of m-tame module is more general than that of finitely presented module,
and it encompasses, for example, the sublevel set persistence of two-dimensional Morse functions \cite[Corollary~2.11]{budney-kaczynski}, as well as subanalytically constructible sheaves \cite[Theorem~$4.5^\prime$]{miller-2}.
However, m-tame modules do not satisfy properties (P2) and (P3), and the notion is still restrictive: for example, an m-tame module necessarily restricts to a one-parameter persistence module with finitely many bars over any one-dimensional slice.
Moreover, m-tame modules do not satisfy property (P1) in the sense that they do not form an Abelian subcategory of the category of all multiparameter persistence modules \cite{miller,waas}.

A yet more general notion is that of pointwise finite-dimensional modules.
These satisfy property (P1) \cite{botnan19}, while property (P2) does not hold on the nose, but can be achieved by passing to the observable category. 
Even then, however, property (P3) does not hold since any q-tame module can be approximated by pointwise finite-dimensional modules \cite{chazal}.

This leaves us with q-tameness as a candidate tameness condition for a well-behaved multiparameter persistence theory. 
It is worth pointing out that the extension of q-tameness to the multiparameter case as introduced in this paper has not been considered before in the literature, but is the natural candidate complementing the notion of ephemeral multiparameter persistence module \cite{berkouk-petit,harsu2024}.
A main contribution of this paper is to 
establish properties (P1) to (P3) for q-tame multiparameter persistence modules.
This is not a trivial extension of the one-parameter case, since, as stated above, the proofs of (P1) and (P2) in the one-parameter case rely on results (R1) and (R2), which do not generalize to multiparameter persistence.

\subsection*{Metric properties}
Beyond metric completeness, which implies the existence of limiting objects with well-defined algebraic properties, other metric properties are convenient when working with persistence modules.
In the case of one-parameter persistence, several works have studied such properties
\cite{blumberg-gal-mandell-pancia,bubenik-vergili,perea-munch-firas,chowdhury,bubenik-desilva-nanda,che-galaz-guijarro-membrillo}.
This includes the study of separability \cite[Theorem~4.18]{bubenik-vergili}, which, together with completeness, implies that the observable category of q-tame one-parameter persistence modules is Polish, which is relevant for defining probability measures on persistence modules.
And it includes as well the characterization of precompact sets \cite[Theorem~3.7]{perea-munch-firas}, which is relevant for approximation of continuous functions on the space of persistence modules.
This analysis has not been carried in the multiparameter case.

As we show in this paper, once a suitable space of multiparameter persistence modules is identified,  precompact sets are characterized by the fact that all their discretizations have finite representation type.
In this sense, a metric condition (in this case precompactness) allows one to control the representation type.


\subsection{Contributions}

\begin{theorem}
  \label{theorem:main-theorem}
  Let $n \geq 1$.
  The observable category of q-tame $n$-parameter persistence modules satisfies the following properties:
  \begin{enumerate}
  \item[(P1)] The category is Abelian and Krull--Schmidt, in the sense that every object decomposes as a direct sum of indecomposables in an essentially unique way.
  \item[(P2)] Two persistence modules are isomorphic if and only if they are at interleaving distance zero.
  \item[(P3)] The extended metric induced by the interleaving distance is complete, in the sense every Cauchy sequence of persistence modules has a limit.
  \end{enumerate}
\end{theorem}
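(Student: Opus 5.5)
The strategy is to reduce everything to a well-behaved description of the observable q-tame category that does not mention barcodes. We model it as follows. Let $\mathbf{Ob}$ be the quotient of the category of $\mathbb{R}^n$-persistence modules by the Serre subcategory of ephemeral modules. Ephemeral modules form a Serre subcategory: kernels, cokernels and extensions of ephemeral modules are ephemeral, where the extension case uses the fact that composing two strict shifts gives a strict shift. So $\mathbf{Ob}$ is Abelian. Q-tame modules are closed under subobjects, quotients and extensions up to ephemeral error, by the same shift-composition trick, so the q-tame part is an Abelian subcategory. For computations we replace $\mathbf{Ob}$ by an equivalent concrete description. Either take the \emph{observable} module $M^{\circ}(a)=\operatorname{colim}_{b<a}\operatorname{im}(M_b\to M_a)$, or take the morphism description $\operatorname{Hom}_{\mathbf{Ob}}(M,N)=\varinjlim_{\varepsilon\to 0}\operatorname{Hom}(M,N[\varepsilon])$ via shifts along $\varepsilon\cdot(1,\dots,1)$. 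Working with the colimit description makes (P2) and Krull--Schmidt tractable.

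\textbf{(P1) Krull--Schmidt.} The key point is that for q-tame $M$ the endomorphism ring $\operatorname{End}_{\mathbf{Ob}}(M)$ is controlled by finite-dimensional data. Fix $\varepsilon>0$. Each endomorphism induces compatible maps on the finite-rank images $\operatorname{im}(M_a\to M_{a+\varepsilon})$. By Fitting's lemma applied to these finite-dimensional algebras, idempotents lift. We then show that $\operatorname{End}_{\mathbf{Ob}}(M)$ is a semiperfect, even pro-finite-dimensional, ring in which every endomorphism of an indecomposable is either invertible or nilpotent up to ephemerals. This makes endomorphism rings of indecomposables local. Existence of a decomposition into indecomposables is the hard part, because a module may split infinitely often. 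We would handle it with a Zorn/limit argument. Inside each finite-rank image, the ranks of the summands bound how many nonzero pieces survive, so any chain of refinements stabilizes "locally at scale $\varepsilon$". Gluing over all $a$ and $\varepsilon\to0$ produces a decomposition $M\cong\bigoplus_i M_i$ with local endomorphism rings. Essential uniqueness then follows from Azumaya's theorem (Krull--Remak--Schmidt--Azumaya). Azumaya's theorem needs the sum to be locally finite. I expect this step to be \emph{the main obstacle}, and I would first try to show that, for fixed $a<b$, only finitely many summands have $\operatorname{im}(M_a\to M_b)\neq0$.

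\textbf{(P2).} If $d_I(M,N)=0$, we have $\varepsilon_k$-interleavings $f_k,g_k$ with $\varepsilon_k\to0$. In the colimit description each $f_k$ defines a morphism in $\mathbf{Ob}$. The difficulty is that different $k$ give different morphisms. We use the finite-dimensionality of the images again. For fixed $a<b$, the sets of interleaving morphisms restricted to the finite-dimensional spaces $\operatorname{im}(M_a\to M_b)$ form inverse systems of nonempty affine varieties, or finite-dimensional affine spaces. A Mittag-Leffler / compactness argument then extracts a coherent family, giving $f\colon M\to N$ and $g\colon N\to M$ in $\mathbf{Ob}$ whose composites equal the identity after every positive shift, that is, in $\mathbf{Ob}$. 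The converse is immediate, since isomorphic objects in $\mathbf{Ob}$ are $\varepsilon$-interleaved for every $\varepsilon$.

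\textbf{(P3).} Given a Cauchy sequence, we pass to a subsequence with $d_I(M_k,M_{k+1})<2^{-k}$ and choose interleavings $\phi_k\colon M_k\to M_{k+1}[2^{-k}]$. Set $\delta_k=\sum_{j\ge k}2^{-j}$ and define $L=\operatorname{colim}_k M_k[\delta_k]$, suitably shifted, or dually a limit. Then $L$ is $\delta_k$-interleaved with $M_k$ by the standard telescoping argument. Its structure maps at strict inequalities $a<b$ factor through some $M_k$ at indices $a'<b'$ once $\delta_k<\min_i(b_i-a_i)$. Hence $L$ is q-tame. Thus $M_k\to L$, and since ephemeral modifications are invisible in $\mathbf{Ob}$, the limit is well defined there.
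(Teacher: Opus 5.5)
Your (P3) is the paper's argument: a colimit of the $M_k[-\delta_k]$ along the shifted interleaving maps, with q-tameness obtained by factoring structure maps through the $M_k$ as in \cref{lemma:q-tame-shift}. Your (P2) has the same overall shape as the paper's. The genuine gap is (P1), where the steps you outline do not work as stated.

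First, $\operatorname{End}_{\mathbf{Ob}}(M)$ is not semiperfect in general. A semiperfect ring admits no infinite family of nonzero orthogonal idempotents, yet already for $n=1$ the q-tame module $\bigoplus_{k\in\mathbb{Z}}\mathbb{k}_{[k,k+1)}$ has observable endomorphism ring $\prod_{k\in\mathbb{Z}}\mathbb{k}$.

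Second, Fitting's lemma gives ``invertible or nilpotent'' only for objects of finite length. Even interval modules are not of finite length in $\mathbf{qtOb}_n$ (e.g.\ $\mathbb{k}_{[0,1)}\supset\mathbb{k}_{[1/2,1)}\supset\mathbb{k}_{[3/4,1)}\supset\cdots$). So at best you get nilpotence on each finite-rank image, and you still owe a proof that the non-invertible endomorphisms form an ideal. Lifting idempotents through the inverse system is also a separate compactness issue, not a consequence of Fitting.

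Third, the existence step is not supplied. Stabilization of refinements ``at each scale'' does not by itself produce submodules $M_i$ with $M=\bigoplus_i M_i$. This assembly is exactly the hard content of the Botnan--Crawley-Boevey theorem, and exactly what breaks in \cref{example:no-decomposition}.

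Conversely, the local-finiteness step you single out as the main obstacle is not one. The Krull--Remak--Schmidt--Azumaya theorem requires only local endomorphism rings, not local finiteness. In any case, local finiteness at $a\ll b$ is automatic, since $\operatorname{im}(M_a\to M_b)$ is finite-dimensional.

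The missing idea is the paper's change of poset (\cref{definition:qtame-to-pfd}). Pass to lower envelopes via $\mathbf{qtOb}_n\simeq\mathbf{qtLsc}_n$, and send $V$ to the module $(s-\varepsilon,s)\mapsto\operatorname{im}(V_{s-\varepsilon}\to V_s)$ on $\mathbf{S}^n\subseteq\mathbf{R}^{2n}$. By \cref{proposition:equivalence-lsc-mono-epi} this is an equivalence onto the left continuous mono-epi modules, and under it q-tameness becomes exactly pointwise finite-dimensionality. Your remark that everything is controlled by finite-rank images is the germ of this. Once it is packaged as an equivalence, you can apply \cite{botnan19} verbatim. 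All that remains is to check that direct summands of a left continuous mono-epi module are again left continuous and mono-epi (\cref{corollary:decomposition-qtLsc}).

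Two smaller points. The observable Hom is the inverse limit $\varprojlim_{\varepsilon>0}\operatorname{Hom}(M,N[\varepsilon])\cong\operatorname{Hom}(\overline{M},N)$ of compatible families, not a direct limit.

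In (P2), a Mittag-Leffler argument is not available for towers of nonempty algebraic sets, whose limit can be empty. Over $\mathbb{Q}$, take $X_m=\{(x,y_1,\dots,y_m):(x-q_i)y_i=1 \text{ for } i\le m\}$ with the coordinate projections, where $(q_i)$ enumerates $\mathbb{Q}$. Also, the interleaving sets are cut out by quadratic equations, so they are not affine spaces. The paper instead combines upper semicontinuity of $\mathbb{I}(V,W)$ for lower semicontinuous $V$ (\cref{prop:upper-semicontinuity-interleavings}) with Stone's theorem for the ind-Zariski topology on Hom-sets over cubes, with compactness coming from \cref{lem:fin-dim-image}. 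Stone's theorem needs closed bonding maps. Over an infinite field, a nonzero linear map with nonzero kernel is not Zariski-closed (project the hyperbola $xy=1$). So closedness on the interleaving sets themselves is the point any version of your compactness argument has to verify.
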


We believe that
the observable category of q-tame $n$-parameter persistence modules
provides a sufficiently general setting for multiparameter persistence due to the following, which states that many standard categories of persistence modules of interest are contained isometrically as full subcategories.

\begin{remark}
  \label{remark:main-subcategories}
  The observable category of q-tame $n$-parameter persistence modules includes, up to equivalence, the following as full subcategories:
  \begin{itemize}
    \item The category of finitely presented $n$-parameter persistence modules.
    \item The category of $n$-parameter persistence modules that can be obtained as the sublevel set persistence of some continuous function $f : X \to \mathbb{R}^n$ with $X$ a finite triangulable space.
    \item The category of $2$-parameter persistence modules that can be obtained by applying Degree-Rips homology or Subdivision-Rips homology to some finite pseudometric space.
  \end{itemize}
  The inclusions are given by mapping a persistence module to its representative in the observable category; in particular, the inclusions respect the interleaving distance.
\end{remark}

In particular, one can consider the metric closure of the (set of isomorphism classes of) objects of any of the categories in \cref{remark:main-subcategories}, and get a category of persistence modules satisfying properties (P1) to (P3).
In the case of the category of finitely presented persistence modules, this closure is used in \cite{bauer23} to show that indecomposability is a generic property in the closure of the space of finitely presented multiparameter persistence modules (see \cite[Remark~1.5]{bauer23}).
We give an explicit characterization of the metric closure
of the set of (isomorphism classes of) finitely presented persistence modules in \cref{corollary:characterization-closure-finitely-presented}, and denote it by $\mathbf{bg}_n$.

For convenience, let us denote the extended metric space given by the isomorphism classes of observable q-tame $n$-parameter persistence modules endowed with the interleaving distance by $\ourspace{n}$.

An important tool we need to state the next proposition is the smoothing $S_\varepsilon V$ of a persistence module $V$. For $\varepsilon\geq 0$ we define $S_\varepsilon V = \operatorname{im}(\eta_\varepsilon:V\to V[\varepsilon])$. We observe that $S_\varepsilon V\to V$ in the interleaving distance as $\varepsilon \to 0$. Since we work in the q-tame setting, the smoothing of a module by $\varepsilon >0$ will always be a pointwise finite-dimensional module, giving a simplification of a possibly quite complicated module. 

The next result gives a characterization of precompact sets in $\ourspace{n}$, as well as separability of $\mathbf{bg}_n$.
In particular, precompactness of a set $A \subseteq \ourspace{n}$ is equivalent to certain discretizations of $A$ being of finite representation type.

\begin{proposition}
  \label{proposition:separability-and-precompactness}
  Let $n \geq 1 \in \mathbb{N}$.
  \begin{itemize}
    \item A set $A \subseteq \ourspace{n}$ is precompact if and only if the set $\{S_\varepsilon V\vert_{\varepsilon\mathbb{Z}^n}:V\in A\}$ has finitely many isomorphism classes, for every $\varepsilon > 0$.
    \item The space $\mathbf{bg}_n$ is separable if and only if either the field $\mathbb{k}$ is countable or $n=1$.
  \end{itemize}
\end{proposition}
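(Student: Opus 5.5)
For the precompactness characterization, the plan is to compare $A$ with its discretized smoothings. The first step is a two-sided approximation estimate. For $V$ q-tame and $\varepsilon>0$, I would show that $S_\varepsilon V$ is pointwise finite-dimensional and $\varepsilon$-interleaved with $V$ (up to the observable quotient). I would also show that the left Kan extension (or another extension along $\varepsilon\mathbb{Z}^n \hookrightarrow \mathbb{R}^n$) of $S_\varepsilon V\vert_{\varepsilon\mathbb{Z}^n}$ is within $c_n\varepsilon$ of $S_\varepsilon V$, with $c_n$ depending only on $n$. Together these put every $V\in A$ within $C\varepsilon$ of a module $E_\varepsilon(V)$ determined up to isomorphism by the discretization. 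If the discretizations fall into finitely many isomorphism classes, then the finitely many $E_\varepsilon(V)$ form a finite $C\varepsilon$-net. Since $\varepsilon$ is arbitrary, $A$ is totally bounded, i.e. precompact; here (P3) identifies precompactness with total boundedness.

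For the converse, suppose $A$ is precompact and fix $\varepsilon>0$. Choose a finite $\delta$-net $V_1,\dots,V_k$ with $\delta$ small relative to $\varepsilon$. The difficulty is that $\delta$-close modules need not have isomorphic smoothings; the discrete restriction is not continuous. I expect this to be the main obstacle. What I would try is to use a family of smoothings rather than a single one. From a $\delta$-interleaving between $V$ and $V_i$ one gets that the image $S_\varepsilon V$ is sandwiched between images of the structure maps of $V_i$ over shifts $\varepsilon\pm 2\delta$. Pointwise these images are finite-dimensional, and on the grid $\varepsilon\mathbb{Z}^n$ the structure is constrained. I would then try to show that the set of possible grid restrictions of such sandwiched modules has finitely many isomorphism classes. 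One route is to bound dimensions uniformly on the grid, since pointwise dimensions of $S_\varepsilon V$ are bounded by ranks of $V_i$ over strict inequalities. A second route is to show that only finitely many grid points carry nonzero data once restricted to a bounded region.

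Boundedness of the region is a genuine issue, because the grid is infinite. If this fails, I would reinterpret the statement as finiteness of isomorphism types after perturbing $\varepsilon$ slightly, choosing $\varepsilon$ generically relative to the net. If the paper's convention allows replacing $\varepsilon$ by a nearby value, genericity makes the restriction locally constant in $V$, and precompactness then yields finitely many classes.

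For separability of $\mathbf{bg}_n$, the closure of the finitely presented modules, the plan splits into two directions. If $\mathbb{k}$ is countable, finitely presented modules with rational grades form a countable set, since they are given by finite matrices over $\mathbb{k}$ with rational shifts. This set is dense among finitely presented modules by rounding grades, hence dense in $\mathbf{bg}_n$. If $n=1$, barcodes with finitely many rational-endpoint bars are countable and dense by (R2), regardless of the field. Conversely, if $\mathbb{k}$ is uncountable and $n\ge2$, I would exhibit an uncountable family of finitely presented modules $M_\lambda$, $\lambda\in\mathbb{k}$, built from a $\lambda$-parametrized indecomposable on a fixed finite grid (for example a Kronecker-type representation embedded via two incomparable points). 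These should be pairwise at distance at least some fixed $r>0$. To show this, I would use the first part's mechanism: for small $\varepsilon$, the discretized smoothings of the $M_\lambda$ are pairwise non-isomorphic. The modules can be chosen so that any $r$-interleaving would induce an isomorphism of the underlying grid representations, which forces $\lambda=\mu$. An uncountable $r$-separated set then precludes separability.
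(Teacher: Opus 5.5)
Your sufficiency direction is the paper's argument. Each $V$ lies within $2\varepsilon$ of the grid extension of $S_\varepsilon V|_{\varepsilon\mathbb{Z}^n}$: $\varepsilon$ for the smoothing and $\varepsilon$ for restriction--extension. Finitely many classes therefore give a finite net, and total boundedness plus completeness gives precompactness. The separability bullet also follows the paper's route. For the lower bound on $d_I(M_\lambda,M_\mu)$, do not appeal to ``the first part's mechanism''; argue directly, as in the appendix. If both modules are defined on a common grid of mesh $g$ and are $r$-interleaved with $2r<g$, then restricting the interleaving maps to the grid turns all shifts into isomorphisms. The restricted maps then compose to isomorphisms in both orders, so $M_\lambda\cong M_\mu$, and hence $d_I(M_\lambda,M_\mu)\ge g/2$ for $\lambda\neq\mu$.

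The genuine gap is the necessity direction of the first bullet. Your sketch there (dimension bounds, a bounded region, a generic $\varepsilon$) is not an argument, and no argument can work, because this direction is false.

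Take $n=1$ and $W=\bigoplus_{j\ge 1}\mathbb{k}_{[j,j+1)}$, where $\mathbb{k}_{[a,b)}$ denotes the interval module. Let $V_m$ be $W$ with the summand $\mathbb{k}_{[m,m+1)}$ replaced by $\mathbb{k}_{[m-1/m,\,m+1+1/m)}$. Bar-wise interleavings give $d_I(V_m,W)\le 1/m$, so $A=\{W\}\cup\{V_m\}_{m\ge 1}$ is compact in $\ourspace{1}$. For $\varepsilon=1$, every bar of $W$ dies under the $1$-shift, so $S_1W=0$. In contrast, $S_1V_m\cong\mathbb{k}_{[m-1/m,\,m+1/m)}$, so for $m\ge 2$ the module $S_1V_m|_{\mathbb{Z}}$ is the simple module at $m$. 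These are pairwise non-isomorphic, so there are infinitely many classes.

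None of your proposed repairs survives this example:
\begin{itemize}
\item All pointwise dimensions are at most $1$, so a dimension bound does not help.
\item The support of a precompact set need not be bounded, as here.
\item Choosing $\varepsilon$ generically does not help either. Consider the following variant. For each $m$, place a finite group of bars $[c_m,c_m+m+i/m)$, $0\le i\le m^2$, in pairwise disjoint regions, and let $V_m$ lengthen exactly that group by $1/m$. Then for every $\varepsilon>0$ and every $m\ge\varepsilon$, some endpoint crosses a point of $\varepsilon\mathbb{Z}$ after smoothing. This gives infinitely many classes for every $\varepsilon$.
\end{itemize}

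The underlying issue is the one you identified. The discretized smoothing jumps whenever an endpoint crosses a grid line, and a precompact family can have such crossings in infinitely many places at arbitrarily small perturbation scales. A $\delta$-interleaving only sandwiches $S_\varepsilon V$ between smoothings of the net point at nearby scales, and infinitely many grid modules fit in that sandwich.

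The paper's proof of this direction asserts that the factorization through $V_k[\delta/2]$ makes $S_\delta V|_{\delta\mathbb{Z}^n}$ ``depend only on $V_k$''. That is exactly the continuity you doubted, and it fails. As stated, only the ``if'' half of the first bullet holds.
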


The next result identifies two common settings in which the collection of multiparameter persistence modules of interest is automatically precompact.

\begin{theorem}
  \label{theorem:examples-of-precompact-sets}
  Let $n \geq 1 \in \mathbb{N}$ and let $k \geq 0 \in \mathbb{N}$.
  \begin{itemize}
    \item Let $X$ be a finitely triangulable space.
    If $F \subseteq C(X ; \mathbb{R}^n)$ is uniformly bounded and equicontinuous, then $H_k(F) \subseteq \ourspace{n}$ is precompact.
    \item Let $Y$ be a compact metric space, let $S$ be the set of finite samples of $Y$, each one endowed with the normalized counting measure.
    Let $G$ denote either the Degree-Rips or Subdivision-Rips constructions postcomposed with $H_k$.
    Then $G(S) \subseteq \ourspace{n}$ is precompact.
  \end{itemize}
\end{theorem}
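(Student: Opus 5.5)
We prove both parts with the precompactness criterion of \cref{proposition:separability-and-precompactness}. Given $\varepsilon>0$, we show that $\{S_\varepsilon V\vert_{\varepsilon\mathbb{Z}^n} : V \in A\}$ has only finitely many isomorphism classes. A more robust route to the same end is to cover $A$ by finitely many $\delta$-balls, for every $\delta>0$, by showing that every $V\in A$ is $\delta$-interleaved with a module in a finite family. We follow the second route and use the first only as a sanity check.

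\emph{First item.} Fix $\delta>0$ and a finite triangulation $K$ of $X$. The $f \in F$ are uniformly bounded, so their values lie in a fixed box $[-R,R]^n$. By equicontinuity and compactness of $X$, there is a subdivision $K'$ of $K$ such that every $f\in F$ varies by less than $\delta$ on each closed simplex of $K'$ (Lebesgue number argument). For $f\in F$, let $\tilde f$ be the function on simplices of $K'$ sending $\sigma$ to the coordinatewise maximum of $f$ over $\sigma$, rounded up to the grid $\delta\mathbb{Z}^n\cap[-R-\delta,R+\delta]^n$. This defines a monotone filtration of the finite complex $K'$. The sublevel filtration of $f$ and the simplicial filtration induced by $\tilde f$ are $2\delta$-interleaved at the level of spaces up to homotopy: a point $x$ with $f(x)\le a$ lies in the open star of simplices on which $\tilde f\le a+2\delta$, and conversely. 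We establish this with the standard nerve/carrier argument, passing through the subcomplex of simplices meeting $f^{-1}(\le a)$ and the deformation retraction onto it. Since $H_k$ preserves homotopy interleavings, $d_I(H_k(f),H_k(\tilde f))\le 2\delta$. There are only finitely many monotone grid-valued functions on the finitely many simplices of $K'$ with values in a finite grid, so $H_k(F)$ is covered by finitely many $2\delta$-balls. Hence $H_k(F)$ is totally bounded, and this suffices for precompactness in $\ourspace{n}$ (completeness is not even needed for this direction).

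\emph{Second item.} We use stability of Degree-Rips and Subdivision-Rips with respect to the Gromov--Hausdorff--Prokhorov-type distance between metric measure spaces, which is known from \cite{blumberg2022stability2parameterpersistenthomology}. Concretely, the homology of these bifiltrations is Lipschitz, up to an additive/multiplicative reparametrization in the degree/density parameter, with respect to a distance combining Gromov--Hausdorff and Prokhorov distances. Fix $\delta>0$ and a finite $\delta$-net $N=\{y_1,\dots,y_m\}$ of $Y$. Given a finite sample $P\subseteq Y$ with normalized counting measure $\mu_P$, push $\mu_P$ forward to $N$ via a nearest-point map $\pi$. This moves points by at most $\delta$, so $(P,\mu_P)$ and $(\pi(P),\pi_*\mu_P)$ are $\delta$-close. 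Round the weights of $\pi_*\mu_P$ to multiples of $\delta/m$ (Prokhorov error $\le\delta$). The result is one of finitely many weighted metric spaces supported on subsets of $N$. Stability then gives that $G(S)$ is covered by finitely many balls of radius $C\delta$.

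The main obstacle is this second step. The stability theorems are stated for specific distances (e.g.\ the Prokhorov/Gromov--Hausdorff-type ``$d_{GPr}$'' for measure bifiltrations). We also need the rounded weighted space to be again handled by the construction: weights are rational, so we realize it as a multiset sample, and we must check that $G$ extends to finite metric measure spaces compatibly. A further issue is that the degree/density parameter space is bounded, with weights in $[0,1]$, so the interleavings live on a product of $\mathbb{R}$ with a bounded axis. We must check that the extension to $\mathbb{R}^2$ used to view $G(S)\subseteq\ourspace{n}$ respects these interleavings. We first try to cite the stability results directly and verify only these compatibility points.
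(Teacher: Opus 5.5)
Your argument is correct in substance, but it takes a different route from the paper.

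\textbf{First item.} The paper uses Arzel\`a--Ascoli to show that $F$ has compact closure $\overline F$ in $C(X;\mathbb{R}^n)$. It then uses the stability bound $d_I(H_k(f),H_k(g))\le\|f-g\|_\infty$, so $H_k(\overline F)$ is a compact set containing $H_k(F)$. You instead build an explicit finite net. This replaces both Arzel\`a--Ascoli and the stability theorem by a direct simplicial approximation, and it works. No homotopy argument is even needed. For $x$ with $f(x)\le a$, any closed simplex $\sigma$ of $K'$ containing $x$ satisfies $\tilde f(\sigma)\le a+2\delta$, so you get honest inclusions $|K'_a|\subseteq\{f\le a\}\subseteq|K'_{a+2\delta}|$. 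Your version buys an explicit net of finitely presented modules and a covering-number bound in terms of the modulus of equicontinuity and the size of $K'$.

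The cost is that you only obtain total boundedness. Contrary to your parenthetical remark, you therefore need completeness of $\ourspace{n}$, i.e.\ property (P3), to conclude precompactness. A totally bounded subset of an incomplete space need not have compact closure. The paper's argument for this item avoids the issue because $H_k$ is defined and continuous on the compact set $\overline F$ itself.

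\textbf{Second item.} The paper combines Gromov's precompactness theorem, Miermont's transfer from Gromov--Hausdorff to Gromov--Hausdorff--Prokhorov precompactness, and the Lipschitz stability of Degree-Rips and Subdivision-Rips. You keep stability as the same black box but replace Gromov and Miermont by an elementary $\delta$-net plus weight rounding. The compatibility worries you list can be sidestepped.
\begin{itemize}
\item Do not apply $G$ to the rounded weighted spaces at all. Instead, group the samples by their rounded configuration. Two samples in the same group are within twice the approximation error of each other in the metric-measure distance. Stability applied to these actual samples then bounds the diameter of the image of each of the finitely many groups. This gives total boundedness, and again (P3) gives precompactness.
\item If the distance in the stability theorem has a Hausdorff component (Gromov--Hausdorff--Prokhorov rather than Gromov--Prokhorov), round the weights so that no point in the support of $\pi_*\mu_P$ gets weight zero. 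Otherwise the Hausdorff term is not controlled.
\item The passage from the bounded degree/density axis to $\mathbb{R}^2$ is taken for granted in the paper as well, so it is not an additional burden of your approach.
\end{itemize}
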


An important example for \cref{theorem:examples-of-precompact-sets} includes
any set of smooth maps $f : X \to \mathbb{R}^n$ from a compact Riemannian manifold $X$ to $\mathbb{R}^n$ with uniformly bounded $W^{2,\infty}$ norm.

\subsection{Related work}
The proof of (P1) in \cref{theorem:main-theorem} crucially relies on a novel construction (\cref{definition:qtame-to-pfd}) relating q-tame representations in one category to pointwise finite-dimensional representations in a different one (see proof of \cref{corollary:decomposition-qtLsc}).
This allows us to leverage the decomposition result for pdf modules \cite{botnan19} even in the q-tame case (as long as we work in the observable category).

An analogous result to (P2) was obtained independently by Petit, Schapira, and Waas~\cite{petit2021}, who establish that the interleaving distance reflects isomorphisms for a class of constructible sheaves.
Their approach uses a derived equivalence between $\gamma$-sheaves and sheaves for the standard  topology satisfying certain microsupport conditions (see~\cite{berkouk-petit}).
However, their setting requires \emph{constructibility} (the existence of a finite stratification on which the sheaf is locally constant), which is a significantly more restrictive condition than q-tameness. 
Another related result by Guillermou and Viterbo~\cite{guillermou23} shows reflection of isomorphisms by a metric of interest in microlocal sheaf theory (under certain constructibility assumptions).
To prove (P2) we follow a more direct approach that does not rely on microlocal sheaf theory and derived categories while admitting significantly more general conditions.
As shown in \cite{chazal}, it is impossible to relax these conditions in any reasonable way without losing (P2).
Our proof strategy was developed independently, and it is outlined in the last author's PhD thesis \cite[Section~4.5.1]{scoccola}, made public before \cite{petit2021,guillermou23}.

The proof of (P3) follows a general argument relying on categorical limits, which is used in \cite{bubenik-vergili} in the case of one-parameter persistence modules, and in \cite{cruz} in a different context where (P1) and (P2) are not satisfied.
The novelty here is in finding a suitable setup in which the argument can be carried out, while still satisfying (P1) and (P2).

Regarding \cref{proposition:separability-and-precompactness},
a similar but different result was announced by Cruz in a talk \cite{cruz-2}.
The difference with our result is that Cruz' applies to a category that is not equivalent to the observable category of q-tame modules, and which, for example, does not satisfy properties (P1) and (P2) in \cref{theorem:main-theorem}.

\section{Preliminaries}

We write $\mathbb{R}^n$ for the real $n$-space considered as a topological space or vector space and $\mathbf{R}^n$ if we consider it as a partial order, obtained as the $n$-fold product of $(\mathbf{R},\leq)$.
We write $\leq$ for the usual ordering relation on the product and $\ll$ for the relation of being strictly less in every component.
Throughout the paper we fix a base field $\mathbb{k}$ and the corresponding categories of (finite-dimensional) vector spaces $\mathbf{Vect}$ (respectively $\mathbf{vect}$).
An $n$-parameter \emph{persistence module} is a functor \[V:\mathbf{R}^n\to \mathbf{Vect}.\]
In other words, $V$ is a $\mathbb{k}$-representation of the poset $\mathbf{R}^n$.
More generally, a persistence module can be defined over arbitrary posets.
For $s\leq t$ in $\mathbf{R}^n$ we denote the map $V_s\to V_t$ by $V_{st}$.
An $n$-parameter persistence set is a functor $S:\mathbf{R}^n\to\mathbf{Set}$.

We say that a persistence module is \emph{pointwise finite-dimensional} if every $V_t$ is finite-dimensional.
An $n$-parameter persistence module $V$ is \emph{q-tame} if all maps $V_{s-\varepsilon,s}$ have finite rank.
Then by cofinality of the diagonal all $V_{st}$ with $s\ll t$ have finite rank.
If $\mathbf{C}$ is a category of persistence modules then we write $\mathbf{pfdC}$ or $\mathbf{qtC}$ for the full subcategory of pointwise finite-dimensional or q-tame modules, respectively. 

If $s\in\mathbb{R}^n,\varepsilon\in\mathbb{R}$ then $s+\varepsilon$ denotes the vector obtained by adding $\varepsilon$ to each component of $s$.
If $V$ is an $n$-parameter persistence module then $V[\varepsilon]$ defined by $V[\varepsilon]_s = V_{s+\varepsilon}$ with structure maps shifted accordingly is the shift of $V$ by $\varepsilon$.
Fix $\varepsilon>0$.
Then the structure maps $V_s\to V_{s+\varepsilon}$ define a morphism of persistence modules $\eta_\varepsilon^V$.
If $V$ is clear from the context then we allow ourselves to omit it.
Let $V,W$ be $n$-parameter persistence modules and let $\varepsilon\geq 0$.
A $\varepsilon$-\emph{interleaving} between $V$ and $W$ consists of morphisms \[f:V\to W[\varepsilon]~~~~\textrm{and}~~~~g:W\to V[\varepsilon]\] so that the diagrams

\[\begin{tikzcd}
	V && {V[2\varepsilon]} \\
	& {W[\varepsilon]}
	\arrow["{\eta_{2\varepsilon}}", from=1-1, to=1-3]
	\arrow["f"', from=1-1, to=2-2]
	\arrow["{g[\varepsilon]}"', from=2-2, to=1-3]
\end{tikzcd}\] and 
\[\begin{tikzcd}
	W && {W[2\varepsilon]} \\
	& {V[\varepsilon]}
	\arrow["{\eta_{2\varepsilon}}", from=1-1, to=1-3]
	\arrow["g"', from=1-1, to=2-2]
	\arrow["{f[\varepsilon]}"', from=2-2, to=1-3]
\end{tikzcd}\] commute.
We say that $V,W$ are $\varepsilon$-\emph{interleaved}.
Observe that a $0$-interleaving is an isomorphism.
We then let \[d_I(V,W) = \inf\{\varepsilon\geq 0:V,W\textrm{ are }\varepsilon\textrm{-interleaved}\}\] which we call the \emph{interleaving distance} of $V$ and $W$.
Observe that $d_I(V,W) = \varepsilon$ does not necessarily imply that $V$ and $W$ are $\varepsilon$-interleaved.
In particular $d_I(V,W) = 0$ does not give us an isomorphism $V\cong W$.
It is not difficult to show that $d_I$ is symmetric and that it satisfies the triangle inequality.
Thus $d_I$ is a pseudometric with values in $[0,\infty]$. 

Let $\mathbf{P}$ be a poset and let $s\in\mathbf{P}$.
We define the module $\mathsf{P}_s$ to be $\mathbb{k}$ on the upset of $s$ and $0$ elsewhere.
As morphisms we insert identities wherever possible and take all other structure maps to be zero.
A module $V$ over $\mathbf{P}$ is \emph{finitely presentable} if it can be written as \[\operatorname{cok}\left(\bigoplus_{j\in J}\mathsf{P}_j\to\bigoplus_{i\in I}\mathsf{P}_i\right)\] with $I,J$ finite multisets of elements of $\mathbf{P}$.

If $\mathbf{Q}\subseteq\mathbf{P}$ is an inclusion of a subposet and if $V$ is a module over $\mathbf{P}$ then we get a restricted module $V\vert_\mathbf{Q}$ by precomposition with the inclusion of $\mathbf{Q}$ in $\mathbf{P}$. 

A finite grid in $\mathbf{R}^n$ is a product $\{\varepsilon_1<\dots <\varepsilon_k\}^n$ with $\varepsilon_1,\dots,\varepsilon_k\in\mathbf{R}$. A countable grid is the $n$-fold product of sequences $\varepsilon_k\in\mathbf{R},k\in\mathbb{Z}$ without accumulation points.
A countable grid is regular if it is of the form $(\varepsilon\mathbf{Z})^n$ for some $\varepsilon>0$.

Let $\mathbf{P}\subseteq\mathbf{R}^n$ be a countable or finite grid and let $V$ be a module over $\mathbf{P}$.
We define the extension $\widetilde V:\mathbf{R}^n\to\mathbf{Vect}$ by \[\widetilde V_s = \left\{\begin{tabular}{cc}$V(\sup\{p\in\mathbf{P}:p\leq s\})$ & there is some $p\in\mathbf{P}$ with $p\leq s$ \\ $0$ & otherwise\end{tabular}\right.\] where the structure maps come from those of $V$ where we use the fact that $\sup \{p\in\mathbf{P}:p\leq s\}\leq \sup\{p\in\mathbf{P}:p\leq t\}$ whenever $s\leq t$ and when there is some $p\in\mathbf{P}$ so that $p\leq s$.
We call a module of this form a $\mathbf{P}$-extension; we then also say that it is defined on $\mathbf{P}$.
Let $V$ be an $n$-parameter module and let $\mathbf{P}\subseteq \mathbf{R}^n$ be a countable or finite grid.
We denote by $V_\mathbf{P}$ the module obtained by first restricting to $\mathbf{P}$ and then extending.
We call this procedure a \emph{restriction-extension}.
This construction is functorial.
We state two lemmas on grid extensions. 

\begin{lemma}
  Let $V$ be a pointwise finite-dimensional $n$-parameter module.
  Then $V$ is finitely presentable if and only if there is a finite grid $\mathbf{P}\subseteq\mathbf{R}^n$ so that $V \cong V_\mathbf{P}$.
\end{lemma}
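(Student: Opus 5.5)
We prove the two directions separately. The backward direction reduces to the statement that a grid extension of a finitely presentable grid module is finitely presentable. The forward direction consists in choosing the grid from the coordinates of a finite presentation.

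($\Leftarrow$) Suppose $V\cong V_\mathbf{P}$ with $\mathbf{P}=\{\varepsilon_1<\dots<\varepsilon_k\}^n$ a finite grid. The restriction $V\vert_\mathbf{P}$ is a pointwise finite-dimensional representation of the finite poset $\mathbf{P}$, so it is finitely generated.

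Representations of a finite poset form the category of modules over its incidence algebra, which is finite-dimensional. Hence finitely generated modules are finitely presented, and we may write
\[V\vert_\mathbf{P}\cong\operatorname{cok}\Big(\bigoplus_{j}\mathsf{P}_{q_j}\to\bigoplus_i\mathsf{P}_{p_i}\Big)\]
with finitely many $p_i,q_j\in\mathbf{P}$.

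Next we check that the extension functor $\widetilde{(-)}$ is exact: at $s$ it evaluates at the single point $\sup\{p\in\mathbf{P}:p\le s\}$ or gives $0$, and evaluation is exact. We also check that $\widetilde{\mathsf{P}_p}$ is the free module $\mathsf{R}^n_p$ on the upset of $p$ in $\mathbf{R}^n$. Indeed, $\sup\{q\in\mathbf{P}: q\le s\}\ge p$ if and only if $s\ge p$, because $\mathbf{P}$ is a product grid and the supremum is taken componentwise. Applying the extension to the presentation then gives a finite presentation of $V_\mathbf{P}\cong V$ over $\mathbf{R}^n$.

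($\Rightarrow$) Let $V\cong\operatorname{cok}(\bigoplus_j\mathsf{R}^n_{b_j}\to\bigoplus_i\mathsf{R}^n_{a_i})$ with finitely many generators $a_i$ and relations $b_j$. Let $E\subset\mathbb{R}$ be the finite set of all coordinates of all $a_i$ and $b_j$, and take $\mathbf{P}=E^n$. Each $a_i$ and $b_j$ lies in $\mathbf{P}$.

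For any $s$, set $p(s)=\sup\{p\in\mathbf{P}:p\le s\}$ when this set is nonempty. The key observation is that for each generator or relation point $c$, we have $c\le s$ if and only if $c\le p(s)$; this holds because every coordinate of $c$ is in $E$. Consequently $\mathsf{R}^n_c \cong (\mathsf{R}^n_c)_\mathbf{P}$, naturally in $s$. The same argument shows that $\mathsf{R}^n_c$ vanishes at every $s$ with no grid point below it, since then no grid point, in particular not $c$, lies below $s$.

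The morphism between free modules is given by a matrix of scalars, and it commutes with restriction-extension. Restriction-extension is exact, being a composite of two exact functors. It therefore commutes with cokernels, so
\[V_\mathbf{P}\cong\operatorname{cok}\big((\textstyle\bigoplus_j\mathsf{R}^n_{b_j})_\mathbf{P}\to(\bigoplus_i\mathsf{R}^n_{a_i})_\mathbf{P}\big)\cong V.\]

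The main point requiring care is the naturality of this identification, namely that the isomorphism $V\cong V_\mathbf{P}$ is compatible with the structure maps. To handle it, we observe that there is a natural map $V_\mathbf{P}\to V$, given at $s$ by the structure map $V_{p(s),s}$ (and by $0$ where there is no grid point below $s$). We then check that this map is an isomorphism on free modules and conclude by the five lemma. The pfd hypothesis is only used in the backward direction, to ensure finite generation of $V\vert_\mathbf{P}$.
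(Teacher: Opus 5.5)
Your proof is correct. In the backward direction you use that $V\vert_\mathbf{P}$ is finitely presented over the finite poset, that grid extension is exact, and that $\widetilde{\mathsf{P}_p}$ is the indicator module of the upset of $p$ in $\mathbf{R}^n$. In the forward direction you take the grid $E^n$ built from the coordinates of the generators and relations, and the natural map $V_\mathbf{P}\to V$ given by $V_{p(s),s}$ is an isomorphism on free modules and hence on $V$ by right exactness. The paper does not prove this lemma itself but refers to \cite{bauer23}, so your argument supplies a complete, self-contained proof of the statement as used there.
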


\begin{lemma}
  Let $\mathbf{P}$ be a finite or countable grid in $\mathbf{R}^n$ and let $V$ be defined on $\mathbf{P}$.
  Let $s<t$ so that every grid point $p$ with $p\leq t$ also satisfies $p\leq s$. Then $V_{s,t}$ is an isomorphism. 
\end{lemma}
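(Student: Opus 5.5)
We prove the lemma by showing that the supremum operation defining $\widetilde V$ gives the same grid point at $s$ and at $t$. The structure map $V_{s,t}$ is then an identity, hence an isomorphism.

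Write $\mathbf{P}=\mathbf{P}_1\times\dots\times\mathbf{P}_n$. Each $\mathbf{P}_i\subseteq\mathbf{R}$ is either finite or a bi-infinite sequence without accumulation points. For $u\in\mathbf{R}^n$ set $D(u)=\{p\in\mathbf{P}:p\leq u\}$.

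The hypothesis says precisely that $D(t)\subseteq D(s)$. Since $s\leq t$ we also have $D(s)\subseteq D(t)$, so $D(s)=D(t)$.

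There are two cases.

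\textbf{Case $D(t)=\emptyset$.} Then $D(s)=\emptyset$ as well, so by definition $\widetilde V_s=\widetilde V_t=0$. The map between zero spaces is trivially an isomorphism.

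\textbf{Case $D(t)\neq\emptyset$.} Then $\sup D(s)=\sup D(t)=:p$.

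We first check that this supremum exists and lies in $\mathbf{P}$. Because $\mathbf{P}$ is a product, $D(u)=\prod_i\{p_i\in\mathbf{P}_i:p_i\leq u_i\}$. Each factor is nonempty, bounded above by $u_i$, and contains no accumulation points, so each factor has a maximum. Hence $D(u)$ has a maximum in $\mathbf{P}$, namely the componentwise maximum. This is also what makes the definition of $\widetilde V$ legitimate.

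With $p$ in hand, $\widetilde V_s=V(p)=\widetilde V_t$. The structure map $\widetilde V_{s,t}$ is by construction induced by $V_{\sup D(s),\,\sup D(t)}=V_{p,p}=\mathrm{id}$, so it is the identity, and in particular an isomorphism.

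The only step requiring care is the existence of the supremum as an actual grid point. This uses that $\mathbf{P}$ is a product of discrete subsets of $\mathbf{R}$, so suprema of nonempty bounded-above downward sections are attained; the rest is unwinding definitions.
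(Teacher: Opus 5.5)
Your proof is correct. The paper states this lemma without proof and treats it as immediate from the definition of the $\mathbf{P}$-extension; your argument ($D(s)=D(t)$, so either both values are $0$ or both equal $V$ at the same grid point, and the structure map is an identity) is exactly that unwinding, with the useful extra check that the supremum is attained in $\mathbf{P}$. One cosmetic point: in the statement $V$ is already the extension (or, if ``defined on $\mathbf{P}$'' is read up to isomorphism, isomorphic to one, in which case the conclusion transfers along the isomorphism), so writing $\widetilde V$ for it slightly conflates notation.
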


A proof of the first lemma can be found in \cite{bauer23}.

\subsection{Constructing the Observable Category}

The most important tool we need is the observable category of $n$-parameter persistence modules. It consists of all persistence modules, but identifies features not seen by the interleaving distance with zero. To achieve this, one localizes the ephemeral modules, which are precisely those modules at distance zero to the zero module. 
We follow closely and generalize the approach of \cite{chazal}, which treats the case of one-parameter modules. A more general account of the below results can be found in \cite{harsu2024}, which uses the language of Scott sheaves. In contrast, we perform all constructions explicitly in the context of modules over $\mathbf{R}^n$. 

We begin with the concept of localization to be used for defining the observable category of multiparameter persistence modules: 

\begin{definition}
  Let $\mathbf{A}$ be an Abelian category.
  A full subcategory $\mathbf{E}\subseteq\mathbf{A}$ is a \emph{Serre subcategory} if for every short exact sequence 

\[\begin{tikzcd}
	0 & {V'} & V & {V''} & 0
	\arrow[from=1-1, to=1-2]
	\arrow[from=1-2, to=1-3]
	\arrow[from=1-3, to=1-4]
	\arrow[from=1-4, to=1-5]
\end{tikzcd}\] in $\mathbf{A}$ we have the equivalence of the following statements:

  \begin{enumerate}
    \item $V',V''\in\mathbf{E}$.
    \item $V\in\mathbf{E}$.
  \end{enumerate}

  A morphism $\varphi:V\to W$ in $\mathbf{A}$ is a \emph{weak equivalence} if $\ker \varphi$ and $\operatorname{cok} \varphi$ are in $\mathbf{E}$.
  The above condition ensures that the composition of weak equivalence remains a weak equivalence: If $\varphi:U\to V,\psi:V\to W$ are weak equivalence then we have an exact sequence  
  \[\begin{tikzcd}
    0 & {\ker\varphi} & {\ker\psi\circ\varphi} & {\ker\psi} & {\operatorname{cok}\varphi} & {\operatorname{cok} \psi\circ\varphi} & {\operatorname{cok}\psi} & 0
    \arrow[from=1-1, to=1-2]
    \arrow[from=1-2, to=1-3]
    \arrow[from=1-3, to=1-4]
    \arrow[from=1-4, to=1-5]
    \arrow[from=1-5, to=1-6]
    \arrow[from=1-6, to=1-7]
    \arrow[from=1-7, to=1-8]
  \end{tikzcd}\] which shows that $\psi\circ\varphi$ is also a weak equivalence.
  
  The \emph{Serre localization} $\pi:\mathbf{A} \to \mathbf{A}/\mathbf{E}$ is a functor carrying weak equivalence to isomorphisms with the universal property that any other such functor factors over $\pi$.
\end{definition}

\begin{definition}
  We call an $n$-parameter module $V$ \emph{ephemeral} if for all $s\in \mathbf{R}^n$ and all $\varepsilon>0$ the map $V_{s-\varepsilon,s}$ vanishes.
  We write $\mathbf{Eph}_n$ for the category of ephemeral modules. 
\end{definition}

\begin{lemma}
  $\mathbf{Eph}_n\subseteq \mathbf{Pers}_n$ is a Serre subcategory.

  \begin{proof}
      We consider a short exact sequence   
      \[\begin{tikzcd}
        0 & {V'} & V & {V''} & 0
        \arrow[from=1-1, to=1-2]
        \arrow[from=1-2, to=1-3,"\varphi"]
        \arrow[from=1-3, to=1-4,"\psi"]
        \arrow[from=1-4, to=1-5]
      \end{tikzcd}\] in $\mathbf{Pers}_n$.
      If $V$ is ephemeral then $V'$ and $V''$ must be, too, since they are submodule and quotient respectively.
      Conversely, let $V'$ and $V''$ be ephemeral.
      Let $\varepsilon >0$.
      Choose some $0<\varepsilon'<\varepsilon$ and consider the diagram 
      \[\begin{tikzcd}
        & {V_{s}'} & {V_{s}} \\
        0 & {V_{s-\varepsilon'}'} & {V_{s-\varepsilon'}} & {V_{s-\varepsilon'}''} & 0 \\
        && {V_{s-\varepsilon}} & {V_{s-\varepsilon}''}
        \arrow["{\varphi_{s}}", from=1-2, to=1-3]
        \arrow[from=2-1, to=2-2]
        \arrow[from=2-2, to=1-2]
        \arrow["{\varphi_{s-\varepsilon'}}", from=2-2, to=2-3]
        \arrow[from=2-3, to=1-3]
        \arrow["{\psi_{s-\varepsilon'}}", from=2-3, to=2-4]
        \arrow[from=2-4, to=2-5]
        \arrow["\alpha", dotted, from=3-3, to=2-2]
        \arrow[from=3-3, to=2-3]
        \arrow["{\psi_{s-\varepsilon}}", from=3-3, to=3-4]
        \arrow[from=3-4, to=2-4]
      \end{tikzcd}\] Since $\psi_{s-\varepsilon'}\circ V_{s-\varepsilon',s-\varepsilon}  = V''_{s-\varepsilon',s-\varepsilon}\circ \psi_{s-\varepsilon} = 0\circ\psi_s = 0$ the dotted arrow exists so that everything commutes.
      Thus $V_{s-\varepsilon,s}$ factors over $V_{s-\varepsilon',s}' = 0$ and we are done.
  \end{proof}
\end{lemma}

Our goal is now to localize $\mathbf{Pers}_n$ by $\mathbf{Eph}_n$.
We first give an explicit definition of the observable category and we will show that it satisfies the universal property of $\mathbf{Pers}_n/\mathbf{Eph}_n$ at the end of this chapter.

\begin{definition}
  We define the \emph{observable category} $\mathbf{Ob}_n$ as follows: The objects are the $n$-parameter persistence modules.
  A morphism $\varphi:V\to W$ in $\mathbf{Ob}_n$ consists of a family of maps $\varphi_{s,s+\varepsilon}:V_s\to W_{s+\varepsilon}$ for $s\in\mathbf{R}^n,\varepsilon>0$ so that 

\[\begin{tikzcd}
	{V_s} & {W_{s+\varepsilon}} \\
	{V_{s+\varepsilon''}} & {W_{s+\varepsilon'}}
	\arrow["{\varphi_{s,s+\varepsilon}}", from=1-1, to=1-2]
	\arrow[from=1-1, to=2-1]
	\arrow["{\varphi_{s+\varepsilon'',s+\varepsilon'}}"', from=2-1, to=2-2]
	\arrow[from=2-2, to=1-2]
\end{tikzcd}\] commutes for all $0\leq\varepsilon''<\varepsilon'\leq \varepsilon$.
We also call these observable morphisms.
For $\varphi:U\to V$ and $\psi:V\to W$ in $\mathbf{Ob}_n$ we define \[(\psi\circ\varphi)_{s,s+\varepsilon} = \psi_{s+\varepsilon',s+\varepsilon}\circ\varphi_{s,s+\varepsilon'}\] for some $0<\varepsilon'<\varepsilon$.
By the above this is independent of the choice of intermediate scalar.
The identity on a module $V$ is given by the structure maps: $\operatorname{id}_V = (V_{s,s+\varepsilon})_{s\in\mathbf{R}^n,\varepsilon >0}$.
Associativity is clear. 
\end{definition}

Observe that if $\varphi:V\to W$ is a morphism of persistence modules in the usual sense then we can interpret it as an observable morphism by $\varphi_{s,s+\varepsilon} = W_{s,s+\varepsilon}\circ \varphi_s$.
This defines a functor \[\pi:\mathbf{Pers}_n\to \mathbf{Ob}_n.\] 
We will later see that $(\mathbf{Ob}_n,\pi)$ satisfy the universal property of Serre localization.

\begin{remark}
  Being q-tame is an observable property in the following sense: If $V,W$ are modules with $V\cong W$ in the observable category so that $V$ is q-tame then $W$ is also q-tame.
  Indeed, If we have an observable isomorphism $\varphi:W\to V$ then we can write a structure map $W_{s-\delta,s}$ as 
  \[\begin{tikzcd}
    {W_{s-\delta}} & {V_{s-\delta'}} & {V_{s-\delta''}} & {W_s}
    \arrow["{\varphi_{s-\delta,s-\delta'}}", from=1-1, to=1-2]
    \arrow[from=1-2, to=1-3]
    \arrow["{\varphi_{s-\delta',s}^{-1}}", from=1-3, to=1-4]
  \end{tikzcd}\] where the middle has finite image.
  On the other hand, being pointwise finite-dimensional is not an observable property: If $E$ is a nonzero ephemeral module then $\bigoplus^\infty_{i=1}E$ is no longer pointwise finite-dimensional.
  It is nonetheless isomorphic to $0$ in the observable category. 
\end{remark}

\subsection{Semicontinuity}

The definitions from this chapter are generalized from those in \cite{Schmahl22}.

\begin{definition}
  An $n$-parameter module $V$ is \emph{lower semicontinuous} (respectively \emph{upper semicontinuous}) if the canonical map \[\underset{s-\varepsilon}{\operatorname{colim}}~V_{s-\varepsilon}\to V_s\] respectively \[V_s\to \lim_{s+\varepsilon} V_{s+\varepsilon}\] is an isomorphism for all $s\in\mathbf{R}^n$.
  We write $\mathbf{Lsc}_n$ for the category of lower semicontinuous modules and $\mathbf{Usc}_n$ for the category of upper semicontinuous modules. 
\end{definition}

\begin{remark}
  An equivalent way of writing the lower semicontinuity condition is \[V_s = \bigcup_{\varepsilon>0}\operatorname{im} V_{s-\varepsilon,s}\] for all $s\in\mathbf{R}^n$.
\end{remark}

\begin{corollary}
  A module $V$ is lower semicontinuous if and only if $V \cong \underset{\varepsilon >0}{\operatorname{colim}}~ V[-\varepsilon]$.
  Dually, $V$ is upper semicontinuous if and only if  $V \cong \underset{\varepsilon>0}{\lim}~V[\varepsilon]$.
\end{corollary}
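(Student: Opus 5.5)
We prove the first statement by using the remark on lower semicontinuity as the working definition: $V$ is lower semicontinuous if and only if $V_s = \bigcup_{\varepsilon>0}\operatorname{im} V_{s-\varepsilon,s}$ for all $s$. The shifts $V[-\varepsilon]$ for $\varepsilon>0$ form a diagram indexed by $(\mathbf{R}_{>0})^{\mathrm{op}}$: for $\varepsilon'<\varepsilon$ the map $V[-\varepsilon]\to V[-\varepsilon']$ is $\eta_{\varepsilon-\varepsilon'}$ shifted. The morphisms $\eta_\varepsilon^{V[-\varepsilon]}:V[-\varepsilon]\to V$ form a compatible cocone, so there is a canonical morphism $c:\operatorname{colim}_{\varepsilon>0}V[-\varepsilon]\to V$.

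Colimits in $\mathbf{Pers}_n$ are computed pointwise. Hence $(\operatorname{colim}_\varepsilon V[-\varepsilon])_s = \operatorname{colim}_\varepsilon V_{s-\varepsilon}$, and $c_s$ is exactly the canonical map in the definition of lower semicontinuity. So if $V$ is lower semicontinuous, every $c_s$ is an isomorphism and $c$ is an isomorphism of persistence modules.

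The converse is the only delicate point, since the statement only asks for \emph{some} isomorphism $V\cong\operatorname{colim}_\varepsilon V[-\varepsilon]$, not necessarily $c$. We handle this by showing that $C:=\operatorname{colim}_\varepsilon V[-\varepsilon]$ is itself lower semicontinuous; lower semicontinuity is invariant under isomorphism, so $V$ then inherits it.

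To see that $C$ is lower semicontinuous, we use that the diagonal direction is cofinal among the points $s-\delta$ with $\delta\in\mathbb{R}^n_{>0}$ approaching $s$. Then
\[
\operatorname{colim}_{\delta>0} C_{s-\delta} = \operatorname{colim}_{\delta>0}\operatorname{colim}_{\varepsilon>0} V_{s-\delta-\varepsilon} \cong \operatorname{colim}_{\varepsilon>0} V_{s-\varepsilon} = C_s .
\]
The middle isomorphism holds because colimits commute with colimits and the map $(\delta,\varepsilon)\mapsto\delta+\varepsilon$ is cofinal. One should also check that this composite isomorphism is the canonical map appearing in the definition; it is, since all maps involved are induced by structure maps of $V$.

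The upper semicontinuous case is formally dual. Pointwise limits along $\varepsilon\to 0$ of $V[\varepsilon]$ compute $\lim_\varepsilon V_{s+\varepsilon}$, and the same interchange argument, with limits commuting with limits, shows that $\lim_\varepsilon V[\varepsilon]$ is upper semicontinuous. I expect the main, though mild, obstacle to be bookkeeping: verifying that the abstract isomorphism case reduces to the canonical map, which is exactly what the double-colimit (respectively double-limit) interchange argument provides.
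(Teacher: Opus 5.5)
Your proof is correct and is essentially the paper's argument: the paper states this corollary without proof, as an immediate consequence of colimits and limits in $\mathbf{Pers}_n$ being computed pointwise. Your extra step showing that $\operatorname{colim}_{\varepsilon>0}V[-\varepsilon]=\overline V$ is always lower semicontinuous (the same colimit interchange the paper uses in the proof of \cref{proposition:equivalence-lsc-mono-epi}) correctly covers the reading in which $\cong$ is an arbitrary isomorphism.

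One correction: drop your opening sentence. The union-of-images condition from the remark only expresses surjectivity of the canonical map $\operatorname{colim}_{\varepsilon>0}V_{s-\varepsilon}\to V_s$. For $n=1$, the interval module supported on $(-\infty,0)$ satisfies it, yet $\operatorname{colim}_{\varepsilon>0}V[-\varepsilon]$ is supported on $(-\infty,0]$, so with it as the definition the forward direction would fail. Fortunately, the body of your argument uses the colimit definition throughout.
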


\begin{definition}
  Let $V$ be a $n$-parameter persistence module.
  We construct a module $\overline{V}$ by taking \[\overline V_s = \underset{\varepsilon>0}{\operatorname{colim}}~V_{s-\varepsilon}\] with the structure maps defined by the appropriate colimits.
  By the functoriality of the colimit this indeed defines a persistence module, which is also called the \emph{lower envelope} of $V$.
  Dually, taking \[\underline V_s = \lim_{\varepsilon>0}V_{s+\varepsilon}\] defines a module $\underline{V}$, called the \emph{upper envelope} of $V$.
  From the universal property of the colimit we get a map $\xi^V:\overline V\to V$ assembled from the structure maps. There is a dual map $\zeta^V:V\to \underline V$.
\end{definition}

Let $\varphi:V\to W$ be an observable morphism of $n$-parameter persistence modules.
Fix some $\delta>0,s\in\mathbf{R}^n$ and let $\varepsilon>\delta>0$.
We then have maps \[V_{s-\varepsilon}\to W_{s-\delta}\to\overline W_s\] which are independent of the choice of $\delta$.
We let $\overline{\varphi}_s:\overline V_s\to\overline W_s$ be the map obtained from the universal property of the colimit when applied to the above data.
Again by the functoriality of the colimit this gives us a functor \[\overline{\,\cdot\,}:\mathbf{Ob}_n\to\mathbf{Pers}_n\] and the dual construction yields a functor \[\underline{\,\cdot\,}:\mathbf{Ob}_n\to\mathbf{Pers}_n.\]  We call these the \emph{lower} and \emph{upper envelope functors}, respectively.

\begin{lemma}
  Let $V$ be q-tame.
  Then $\overline V$ is also q-tame.

  \begin{proof}
    Let $s\ll t$ and choose $\delta>0$ so that $s\ll t-\delta$.
    We can then  factor \[\overline V_{s,t} = \overline V_s\to V_s \to V_{t-\delta}\to\overline V_t\] which shows the claim since $V_{s}\to V_{t-\delta}$ has finite image. 
  \end{proof}
\end{lemma}

\subsection{A Three-Fold Equivalence}

The goal of this section is to establish the equivalences \[\mathbf{qtLsc}_n\simeq \mathbf{qtOb}_n\simeq \mathbf{qtUsc}_n.\] 
To this end we show that we have adjunctions \[\overline{\,\cdot\,}\dashv \pi\dashv \underline{\,\cdot\,}\] and that these are adjoint equivalences when restricted to the appropriate subcategories.
The existence of these equivalences and of the adjunctions we use is already remarked in \cite{chazal}.
This chain of equivalences will give us the flexibility to always choose a suitable incarnation of the observable category for each purpose.

\begin{proposition}
  We have a chain of adjunctions \[\overline{\,\cdot\,}\dashv \pi\dashv \underline{\,\cdot\,}.\]

  \begin{proof}
    We show the adjunction $\overline{\,\cdot\,}\dashv \pi$, the other part is entirely dual.
    We have to establish natural bijections \[\operatorname{Hom}(\overline V,W) \cong  \operatorname{Hom}(V,\pi (W))\] for $V\in\mathbf{Ob}_n$ and $W\in \mathbf{Pers}_n$.
    We first construct the map \[(\cdot)^\flat:\operatorname{Hom}(\overline V,W)\to\operatorname{Hom}(V,\pi(W)).\] 
    If $\varphi:\overline{V}\to W$ we define $\varphi^\flat:V\to \pi(W)$ as $\varphi^\flat_{s-\varepsilon,s} = \varphi_s\circ \operatorname{inc}_{s-\varepsilon,s}$ where $\operatorname{inc}_{s-\varepsilon}:V_{s-\varepsilon,s}\to\overline V_s$ is the inclusion into the colimit. 
    Let $\varepsilon\geq \varepsilon'>\varepsilon'' \geq 0$ and consider the diagram 
    \[\begin{tikzcd}[column sep=large]
      {V_{s-\varepsilon}} & {\overline V_{s}} & {W_s} \\
      {V_{s-\varepsilon'}} & {\overline V_{s-\varepsilon''}} & {W_{s-\varepsilon''}}
      \arrow["{\operatorname{inc}_{s-\varepsilon,s}}", from=1-1, to=1-2]
      \arrow[from=1-1, to=2-1]
      \arrow["{\varphi_s}", from=1-2, to=1-3]
      \arrow["{\operatorname{inc}_{s-\varepsilon',s-\varepsilon''}}"', from=2-1, to=2-2]
      \arrow[from=2-2, to=1-2]
      \arrow["{\varphi_{s-\varepsilon''}}"', from=2-2, to=2-3]
      \arrow[from=2-3, to=1-3]
    \end{tikzcd}\] The left square commutes by construction and the right square commutes since $\varphi$ is already a morphism of persistence modules.
    Therefore $\varphi^\flat$ is indeed an observable morphism.
    We now construct the map \[\operatorname{Hom}(\overline V,W) \leftarrow \operatorname{Hom}(V,\pi (W)):(\cdot)^\sharp.\]
    Let $\psi:V\to \pi(W)$ be an observable morphism.
    For all $s\leq t$ and all $\varepsilon> 0$ the diagram 
    \[\begin{tikzcd}
      {V[-\varepsilon]_s} & {W_s} \\
      {V[-\varepsilon]_t} & {W_t}
      \arrow["{\psi_{s-\varepsilon,s}}", from=1-1, to=1-2]
      \arrow[from=1-1, to=2-1]
      \arrow["{\psi_{s-\varepsilon,t}}", from=1-1, to=2-2]
      \arrow[from=1-2, to=2-2]
      \arrow["{\psi_{t-\varepsilon,t}}"', from=2-1, to=2-2]
    \end{tikzcd}\] commutes.
    We thus have morphisms \[\psi^\varepsilon:V[-\varepsilon]\to W\] and since \[\overline V = \underset{\varepsilon >0}{\operatorname{colim}}~V[-\varepsilon]\] we can take \[\psi^\sharp = \underset{\varepsilon >0}{\operatorname{colim}}~\psi^\varepsilon\] which is a morphism of persistence modules by construction.
    We now verify that these constructions are indeed inverses. Let $\varphi:\overline V\to W$ be a morphism of persistence modules.
    Then \[\varphi^{\flat\sharp}_s = \underset{\varepsilon >0}{\operatorname{colim}}~\varphi^{\sharp}_{s-\varepsilon,s} =\underset{\varepsilon >0}{\operatorname{colim}} (\varphi_s\circ\operatorname{inc}_{s-\varepsilon,s}) = \varphi_s\] since a map out of $\overline V_s$ is precisely determined by the values it takes on the constituents of the colimit.
    Conversely, let $\psi:V\to \pi(W)$ be an observable morphism.
    We can then compute \[\psi^{\sharp\flat}_{s-\varepsilon,s} = \psi_s^\sharp\circ\operatorname{inc}_{s-\varepsilon,s} = \underset{\delta >0}{\operatorname{colim}} (\psi_{s-\delta,s})\circ\operatorname{inc}_{s-\varepsilon,s} = \psi_{s-\varepsilon,s}\] which shows that $\sharp$ and $\flat$ are inverses.
    Naturality follows since all constructions were canonical. 
  \end{proof}
\end{proposition}

\begin{proposition}\label{prop:adjoint_equivalence}
  The above adjunctions become adjoint equivalences  \[\mathbf{Lsc}_n \simeq \mathbf{Ob}_n\simeq \mathbf{Usc}_n\] when appropriately restricted.

  \begin{proof}
    We again focus on the $\overline{\,\cdot\,}\dashv \pi$-case.
    To show that we have an adjoint equivalence it suffices to show that the unit \[\eta_V:V\to \pi(\overline V)\] and the counit \[\varepsilon_W:\overline{\pi(W)}\to W\] are isomorphisms for all $V\in\mathbf{Ob}_n$ and $W\in\mathbf{Lsc}_n$.
    The unit arises via \[(\cdot)^\sharp:\operatorname{Hom}(\overline V,\overline V)\to \operatorname{Hom}(V,\pi(\overline{V}))\] \[\operatorname{id}_{\overline V}\mapsto \eta_V\] and the counit is obtained by \[(\cdot)^\sharp:\operatorname{Hom}(\pi(W),\pi(W))\to \operatorname{Hom}(\overline{\pi(W)},W)\] \[\operatorname{id}_{\pi(W)}\mapsto \varepsilon_W\] Let $\varepsilon,\delta>0$.
    Observe that we have $\eta_{V,s-\varepsilon,s} = \operatorname{inc}_{s-\varepsilon,s}$ and recall that we have a canonical map $\xi^V:\overline V\to V$ in the other direction.
    The map $\xi^V_{s,s+\delta}$ is glued from structure maps $V_{s-\delta'} \to V_{s}\to V_{s+\delta}$.
    Hence the composition 

\[\begin{tikzcd}
	{V_{s-\varepsilon}} & {\overline V_s} & {V_{s+\delta}}
	\arrow["{\operatorname{inc}_{s-\varepsilon,s}}", from=1-1, to=1-2]
	\arrow["{\xi^V_{s,s+\delta}}", from=1-2, to=1-3]
\end{tikzcd}\] is just the structure map $V_{s-\varepsilon}\to V_{s+\delta}$.
Conversely consider the composition 
\[\begin{tikzcd}
	{\overline{V}_{s-\varepsilon}} & {V_s} & {\overline V_{s+\delta}}
	\arrow["{\xi^V_{s-\varepsilon,s}}", from=1-1, to=1-2]
	\arrow["{\operatorname{inc}_{s,s+\delta}}", from=1-2, to=1-3]
\end{tikzcd}\] which is a factorization of the restriction map $\overline V_{s-\varepsilon}\to \overline V_{s+\delta}$.
Therefore $\eta_V$ is an observable isomorphism with inverse $\xi^V$.
For the counit we observe that \[\varepsilon_{W,s} = \underset{\varepsilon>0}{\operatorname{colim}}~W_{s-\varepsilon,s}\] which defines an isomorphism by lower semicontinuity. 
  \end{proof}
\end{proposition}

\begin{corollary}
  We have a chain of equivalences \[\mathbf{qtLsc}_n \simeq \mathbf{qtOb}_n\simeq \mathbf{qtUsc}_n.\] 

  \begin{proof}
    We have already shown that $\overline{\,\cdot\,}$ (and dually, $\underline{\,\cdot\,}$) preserves q-tameness and that q-tameness is an observable property.
    Thus the equivalences follow immediately from \cref{prop:adjoint_equivalence}.
  \end{proof}
\end{corollary}

 \begin{remark}
  Interleavings are preserved when passing between these categories.
  Clearly, the $\pi$-functor preserves interleaving relations.
  Dually, we show that $\overline{\,\cdot\,}$ also preserves interleavings.
  Let $V,W\in\mathbf{Ob}_n$ and consider a $\varepsilon$-interleaving $f:V\to W[\varepsilon],g:W\to V[\varepsilon]$.
  By functoriality we get commutative diagrams 
  \[\begin{tikzcd}
    {\overline V} && {\overline{V[2\varepsilon]}} \\
    & {\overline{W[\varepsilon]}}
    \arrow["{\eta_{2\varepsilon}}", from=1-1, to=1-3]
    \arrow[from=1-1, to=2-2]
    \arrow[from=2-2, to=1-3]
  \end{tikzcd}\] and 
  \[\begin{tikzcd}
    {\overline W} && {\overline{W[2\varepsilon]}} \\
    & {\overline{V[\varepsilon]}}
    \arrow["{\eta_{2\varepsilon}}", from=1-1, to=1-3]
    \arrow[from=1-1, to=2-2]
    \arrow[from=2-2, to=1-3]
  \end{tikzcd}\] Hence it remains to observe that the functor $\overline{\,\cdot\,}$ commutes with shifts.
  Indeed, \[\overline{V[\varepsilon]}_s = \underset{\delta>0}{\operatorname{colim}}~V[\varepsilon]_{s-\delta} = \underset{\delta>0}{\operatorname{colim}}~V_{s+\varepsilon-\delta} = \overline{V}[\varepsilon]_s \] which shows the claim.
  Thus all metric properties carry over between these categories. 
 \end{remark}

 \begin{remark}
  The isomorphism classes of any of the above categories of q-tame modules form a set, in particular, we do not run into any size issues when treating the isomorphism classes as points of a space.
  We show this in the case of q-tame and lower semicontinuous modules.
  Let $V$ be such a module.
  Then at every index $s\in\mathbf{R}^n$ we can write \[V_s = \bigcup_{\varepsilon>0} \operatorname{im}(V_{s-\varepsilon}\to V_s)\] where we identify the images in $V_s$.
  Each image is finite-dimensional by q-tameness, hence $V_s$ is of countable dimension.
  It follows that every structure map $V_{s,t}$ is a morphism between vector spaces of countable dimension.
  Thus $V$ is determined uniquely by a countable-dimensional vector space for each $s\in\mathbb{R}^n$ and a morphism of such vector spaces for every pair $s\leq t$ in $\mathbb{R}^n$.
  It is now clear that all possible choices of such data, even those not forming persistence modules, are a set. 
 \end{remark}

 \begin{definition}
  We write $\ourspace{n}$ for the extended pseudometric space of isomorphism classes of the category $\mathbf{qtOb}_n$. By the above remark, this is indeed a set. 
 \end{definition}

 The next proposition states that $\mathbf{Ob}_n$ is indeed the Serre localization of $\mathbf{Pers}_n$ at $\mathbf{Eph}_n$.
  The proof of the 1-parameter case in \cite{chazal} generalizes verbatim to the $n$-parameter case.

\begin{proposition}
  The pair $(\mathbf{Ob}_n,\pi)$ satisfies the universal property of\/ $\mathbf{Pers}_n/\mathbf{Eph}_n$.
  More explicitly, any functor $F:\mathbf{Pers}_n\to \mathbf{C}$ taking weak equivalence to isomorphisms factors uniquely through~$\pi$.
\end{proposition}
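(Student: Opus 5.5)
Two things need to be shown: $\pi$ inverts weak equivalences, and every functor $F:\mathbf{Pers}_n\to\mathbf{C}$ that inverts weak equivalences factors uniquely as $F = G\circ\pi$. For the factorization I will use the lower envelope functor $\overline{\,\cdot\,}:\mathbf{Ob}_n\to\mathbf{Pers}_n$ and set $G = F\circ\overline{\,\cdot\,}$. From the proof of \cref{prop:adjoint_equivalence} we already know the unit $\eta_V:V\to\pi(\overline V)$ is an observable isomorphism, with inverse $\xi^V$. Hence $\pi\circ\overline{\,\cdot\,}\cong\operatorname{id}_{\mathbf{Ob}_n}$ naturally. The remaining work is to compare $\overline{\pi(W)}$ with $W$ inside $\mathbf{Pers}_n$ after applying $F$.

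\textbf{Step 1: $\pi$ inverts weak equivalences.} Let $\varphi:V\to W$ have ephemeral kernel $K$ and cokernel $C$. I will build an observable inverse $\psi_{s,s+\varepsilon}:W_s\to V_{s+\varepsilon}$ by a diagram chase through the intermediate index $s+\varepsilon/2$.
\begin{itemize}
\item \emph{Surjectivity up to shift.} For $w\in W_s$, its image in $W_{s+\varepsilon/2}$ vanishes in $C_{s+\varepsilon/2}$, because $C$ is ephemeral. So it lifts to some $v\in V_{s+\varepsilon/2}$.
\item \emph{Well-definedness.} Any two lifts differ by an element of $K_{s+\varepsilon/2}$. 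This difference dies in $V_{s+\varepsilon}$, because $K$ is ephemeral.
\end{itemize}
So pushing $v$ to $V_{s+\varepsilon}$ gives a well-defined linear map $\psi_{s,s+\varepsilon}$. Checking the compatibility squares and the identities $\psi\circ\pi(\varphi)=\operatorname{id}$ and $\pi(\varphi)\circ\psi=\operatorname{id}$ in $\mathbf{Ob}_n$ is routine, since all maps are structure maps up to the chosen lifts. This is the same chase as in the proof that $\mathbf{Eph}_n$ is Serre.

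\textbf{Step 2: $\xi^W:\overline W\to W$ is a weak equivalence for every $W\in\mathbf{Pers}_n$.} Take $x\in\ker\xi^W_s$ and represent it by $v\in W_{s-\delta}$ whose image in $W_s$ is zero. For any $\varepsilon>0$, the image of $x$ in $\overline W_{s+\varepsilon}$ is represented by the image of $v$ in $W_{s+\varepsilon-\delta'}$ for some $\delta'<\varepsilon$. That element is zero, since $s+\varepsilon-\delta'\geq s$. So $\ker\xi^W$ is ephemeral.

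For the cokernel: any $w\in W_{s-\varepsilon}$ maps into $W_s$ through $\overline W_s$, since it lies in the image of the colimit inclusion. So the cokernel's structure maps vanish, and $\operatorname{cok}\xi^W$ is ephemeral.

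Consequently $F(\xi^W)$ is an isomorphism, natural in $W$. This gives $G\circ\pi = F\circ\overline{\pi(\cdot)}\cong F$, using that $\overline{\pi(W)}=\overline W$.

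\textbf{Step 3: strictness and uniqueness.} To get strict equality $G\circ\pi=F$ rather than a natural isomorphism, I expect the cleanest route is to allow the factorization up to unique natural isomorphism, as is standard for localizations. Alternatively, one can define $G$ on objects by $G(V)=F(V)$ and on morphisms by $G(\varphi)=F(\xi^{W})\circ F(\overline\varphi)\circ F(\xi^V)^{-1}$. Functoriality then follows from naturality of $\xi$, checked against observable composition. For $G(\pi\varphi)=F(\varphi)$ one needs $\overline{\pi(\varphi)}=\overline\varphi$ and naturality of $\xi$ for genuine morphisms.

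Uniqueness comes from the fact that every observable morphism $\varphi:V\to W$ factors as
\[
\varphi=\pi(\xi^W)\circ\pi(\overline\varphi)\circ\pi(\xi^V)^{-1},
\]
using the isomorphism of Step 1 and the fact that $\eta_V$ is inverse to $\pi(\xi^V)$. Hence any $G'$ with $G'\pi=F$ is forced to agree with $G$ on all morphisms.

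The main obstacle is Step 3: verifying that this formula defines a functor and reproduces $F$ exactly. It requires careful bookkeeping of how $\overline{\,\cdot\,}$ interacts with observable composition and with $\pi$.
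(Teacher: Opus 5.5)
Your proof is correct, and it takes a genuinely different route from the paper. The paper gives no argument of its own here; it only asserts that the one-parameter proof of \cite{chazal} carries over verbatim.

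You instead verify the universal property directly for the explicit model $\mathbf{Ob}_n$. You use only what Section~2 already supplies: the lower envelope functor $\overline{\,\cdot\,}$, the unit of $\overline{\,\cdot\,}\dashv\pi$, and the fact that this unit is inverse to $\pi(\xi^V)$. On top of that you add two short chases: $\pi$ inverts weak equivalences, and $\xi^W\colon\overline W\to W$ has ephemeral kernel and cokernel. This makes the proof self-contained, so the $n$-parameter case is actually checked rather than asserted. It gives strict factorization and strict uniqueness, and it needs no additivity or exactness assumption on $F$. The citation route is shorter, but it leaves the reader to confirm that each one-parameter step survives replacing $s<t$ by $s\ll t$.

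Step~3 is easier than you expect, and the hedge about factorizing only up to natural isomorphism is unnecessary.
\begin{itemize}
\item Functoriality of $G$ is telescoping: the factors $F(\xi^W)^{-1}F(\xi^W)$ cancel, and then you only need functoriality of $\overline{\,\cdot\,}$ on $\mathbf{Ob}_n$. Naturality of $\xi$ is not what is used there.
\item The strict identity $G\circ\pi=F$ comes from the naturality square $\xi^W\circ\overline{\pi(f)}=f\circ\xi^V$ for genuine morphisms $f$. This is a one-line check on colimit representatives.
\item Uniqueness, as you say, comes from naturality of the unit together with $\eta_V^{-1}=\pi(\xi^V)$.
\end{itemize}

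In Step~1, you should record explicitly that $\psi_{s,s+\varepsilon}$ does not depend on the choice of intermediate index. The same ephemeral-kernel argument shows this, and the observable compatibility squares for $\psi$ rely on it.
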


\begin{corollary}
 The observable category is an Abelian category.
\end{corollary}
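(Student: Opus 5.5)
The plan is to deduce the corollary from the preceding proposition together with the standard theory of Serre quotients. The key input is Gabriel's theorem: whenever $\mathbf{E}$ is a Serre subcategory of an Abelian category $\mathbf{A}$, the Serre localization $\mathbf{A}/\mathbf{E}$ exists, is Abelian, and the localization functor $\mathbf{A}\to\mathbf{A}/\mathbf{E}$ is exact. Two earlier results make this theorem applicable here. First, $\mathbf{Pers}_n$ is Abelian, since it is a functor category into $\mathbf{Vect}$ and kernels and cokernels are computed pointwise. Second, we have already shown that $\mathbf{Eph}_n\subseteq\mathbf{Pers}_n$ is a Serre subcategory. Since $(\mathbf{Ob}_n,\pi)$ satisfies the universal property of $\mathbf{Pers}_n/\mathbf{Eph}_n$, it is equivalent to Gabriel's quotient by uniqueness of universal objects. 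Being Abelian is invariant under equivalence of categories, so this equivalence transfers the Abelian structure to $\mathbf{Ob}_n$.

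The one point that needs care is that Gabriel's construction of the quotient uses only the Serre condition. The universal property is then used purely to identify our explicit category with that quotient.

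I would add a remark giving a more concrete description, which avoids citing Gabriel in full. The approach is to use the equivalences $\mathbf{Lsc}_n\simeq\mathbf{Ob}_n\simeq\mathbf{Usc}_n$. Kernels in $\mathbf{Ob}_n$ can be computed as follows: take any representative morphism, compute its pointwise kernel in $\mathbf{Pers}_n$, and apply $\pi$. Cokernels are handled the same way. The weak equivalences of $\mathbf{Pers}_n$ become isomorphisms in $\mathbf{Ob}_n$, and exactness of $\pi$ then gives that the image and coimage agree.

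The hardest part of this direct route is checking that an arbitrary observable morphism is, up to isomorphism, the image under $\pi$ of an honest morphism. This can be arranged using $\overline{V}\to W$ via the adjunction $\overline{\,\cdot\,}\dashv\pi$. That is precisely why I would instead rely on the cleaner abstract argument through Gabriel's theorem.
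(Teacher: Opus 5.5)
Your argument is correct and is the paper's own: the corollary comes right after the proposition identifying $(\mathbf{Ob}_n,\pi)$ with the Serre localization $\mathbf{Pers}_n/\mathbf{Eph}_n$, and the implicit proof is Gabriel's theorem that the Serre quotient of an Abelian category is Abelian (the paper spells out the same reasoning for the q-tame version in the next corollary). Your optional ``direct'' remark is only a sketch, since it appeals to exactness of $\pi$ before $\mathbf{Ob}_n$ is known to be Abelian; you do not rely on it, so nothing is missing.
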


\begin{corollary}
  The observable category of q-tame modules is Abelian.
  \begin{proof}
    We first observe that $\mathbf{qtOb}_n$ is the Serre localization of the full subcategory of q-tame modules at the Serre subcategory of ephemeral modules.
    Thus it suffices to show that the full subcategory of q-tame persistence modules forms an Abelian category, which can readily be seen by verifying that it is closed under finite direct sums, kernels, and cokernels.
  \end{proof}
\end{corollary}

\section{Algebraic Constructions: Decomposition Results}\label{section:decomposition}

We now consider the decomposition theorems for q-tame and continuous modules obtained in \cite{Schmahl22} for the $1$-parameter case, which we generalize to $n$ parameters.
We first show that we have additive decomposition in the category of lower semicontinuous modules, which dually yields decomposition into products in the upper semicontinuous category.
These results combine to givethe first part of our main theorem, which asserts an essentially unique decomposition into biproducts in the observable category of q-tame persistence modules.

\subsection{Additive Decomposition for Lower Semicontinuous Modules}

In this section we study the additive decomposition of semicontinuous modules, which will allow us to establish the Krull--Schmidt property.
Our general strategy will be to transform a lower semicontinuous q-tame module to a pointwise finite-dimensional module over another poset by an equivalence of categories.
We will then be able to decompose this module by the well-known decomposition result for pointwise finite-dimensional modules in \cite{botnan19} and transfer the decomposition back to $\mathbf{R}^n$.

\begin{definition}
  \label{definition:qtame-to-pfd}
  We consider the sets \[\mathbf{S}^n = \{(s-\varepsilon,s):s\in\mathbf{R}^n,\varepsilon > 0\}\] and \[\overline{\mathbf{S}^n} = \{(s-\varepsilon,s):s\in\mathbf{R}^n,\varepsilon \geq 0\}\] with the partial order inherited as subsets of $\mathbf{R}^{2n}$.
\end{definition}

We have a functor
\[\operatorname{im}:\mathbf{Pers}_n\to \mathbf{Pers}(\overline{\mathbf{S}^n})\] given on objects by 
\[\operatorname{im}(V)_{(s-\varepsilon,s)} = \operatorname{im}(V_{s-\varepsilon,s}:V_{s-\varepsilon}\to V_s)\]
and on morphisms by restriction to images.
In the other direction, there is a functor \[\operatorname{dg}:\mathbf{Pers}(\overline{\mathbf{S}^n}) \to \mathbf{Pers}_n\] which restricts a module over $\mathbf{S}^n$ to the diagonal by precomposing with the inclusion $\mathbf{R}^n\to \overline{\mathbf{S}^n}$.

\begin{definition}
  We say that a module $V$ over $\mathbf{S}^n$ (respectively, $\overline{\mathbf{S}^n}$) has the \emph{mono-epi} property if for all $\varepsilon>\delta>0$ (respectively, $\varepsilon>\delta\geq 0$) all horizontal maps \[V_{(s-\varepsilon,s),(s-\delta,s)}:V_{(s-\varepsilon,s)}\to V_{(s-\delta,s)}\] are monomorphisms and all vertical maps \[V_{(s,s+\delta),(s,s+\varepsilon)}: V_{(s,s+\delta)}\to V_{(s,s+\varepsilon)}\] are epimorphisms.
\end{definition}

\begin{lemma}\label{lemma:equivalence_mono_epi}
  The category $\mathbf{Pers}_n$ is equivalent to the category of modules over $\overline{\mathbf{S}^n}$ that have the mono-epi property via the above functors. 

  \begin{proof}
    We can immediately verify that  $\operatorname{dg}\circ\operatorname{im}$ is isomorphic to the identity functor on $\mathbf{Pers}_n$.
    Conversely, to verify that $\operatorname{im}\circ \operatorname{dg}$ is isomorphic to the identity on the category of modules over $\overline{\mathbf{S}^n}$ that have the mono-epi property, the factorization 
        \[\begin{tikzcd}
            {W_{(s-\delta,s)}} & {W_{(s,s)}} \\
            {W_{(s-\delta,s-\delta)}}
            \arrow[hook, from=1-1, to=1-2]
            \arrow[two heads, from=2-1, to=1-1]
        \end{tikzcd}\]
    of the structure map $W_{(s-\delta,s-\delta)}\to W_{(s,s)}$ for $\delta > 0$ gives \(W_{s-\delta,s} \cong \operatorname{im}(W_{(s-\delta,s-\delta)}\to W_{(s,s)})\).
  \end{proof}
\end{lemma}

Moreover, we have a restriction functor \[\operatorname{res}:\mathbf{Pers}(\overline{\mathbf{S}^n})\to\mathbf{Pers}(\mathbf{S}^n)\] which forgets the values of a persistence module on the diagonal.

In the other direction, we define a functor \[\operatorname{ext}^L:\mathbf{Pers}(\mathbf{S}^n)\to \mathbf{Pers}(\overline{\mathbf{S}^n})\] which extends a module $V$ to the diagonal via a colimit from below as
\[\operatorname{ext}^L(V)_{(s,s)} = \underset{\varepsilon>0}{\operatorname{colim}}~V_{(s-\varepsilon,s)},\]
and which defines structure maps and morphisms via the universal property of the colimit: 
If $\varphi:V\to W$ is a morphism of modules over $\mathbf{S}^n$, then the $\varphi_{(s-\varepsilon,s)}:V_{(s-\varepsilon,s)}\to W_{(s-\varepsilon,s)}$ define a natural transformation of colimit data, and so the universal property of the colimit gives us a morphism $\operatorname{ext}^L(V)_{(s,s)}\to \operatorname{ext}^L(W)_{(s,s)}$.

\begin{remark}
  By instead taking a limit from above, we can define another functorial extension to the diagonal
  \[\operatorname{ext}^R:\mathbf{Pers}(\mathbf{S}^n)\to \mathbf{Pers}(\overline{\mathbf{S}^n}).\]
  In fact, we have adjunctions
  \[\operatorname{ext}^L \dashv \operatorname{res}\dashv \operatorname{ext}^R.\]
  We will, however, not make use of this fact. 
\end{remark}

\begin{definition}
  We say that a module $V$ over $\mathbf{S}^n$ is \emph{left continuous} if for all $(r,s)\in\mathbf{S}^n$ we have \[V_{r,s} = \underset{\delta >0}{\operatorname{colim}}~ V_{r-\delta,s}.\]
\end{definition}

\begin{lemma}\label{lemma:mono-epi-left-continuous}
  For a module $V$ over $\mathbf{S}^n$, the following are equivalent:

  \begin{enumerate}
    \item $V$ is left continuous and has the mono-epi property.
    \item $\operatorname{ext}^L(V)$ has the mono-epi property.
  \end{enumerate}

  \begin{proof}
    Let $\operatorname{ext}^L(V)$ have the mono-epi property.
    Then $V$ also has the mono-epi property.
    Moreover, let $s\in\mathbf{R}^n, \varepsilon>0$ and $t = s+\varepsilon$.
    We then have a surjection \[\underset{\delta >0}{\operatorname{colim}}~V_{(s-\delta,s)}\twoheadrightarrow V_{s,t}\] which factors as 
    \[\begin{tikzcd}
      {\underset{\delta>0}{\operatorname{colim}}~V_{(s-\delta,s)}} & {\underset{\delta>0}{\operatorname{colim}}~V_{(s-\delta,t)}} & {V_{(s,t)}}
      \arrow[from=1-1, to=1-2]
      \arrow[two heads, from=1-2, to=1-3]
    \end{tikzcd}\] by the universal property of the colimit.
    This implies that the left map is an isomorphism, too.
    Conversely, the implication follows from the fact that the colimit functor preserves epimorphisms. 
  \end{proof}
\end{lemma}

\begin{proposition}\label{proposition:equivalence-lsc-mono-epi}
  The full subcategory $\mathbf{Lsc}_n$ of lower semicontinuous modules is equivalent to the full subcategory of left continuous mono-epi modules over $\mathbf{S}^n$.

  \begin{proof}
    The category $\mathbf{Lsc}_n$ is clearly equivalent to the category of modules over $\overline{\mathbf{S}^n}$ that have the mono-epi property and restrict to a lower semicontinuous module on the diagonal, by an application of \cref{lemma:equivalence_mono_epi}.
    We show that the latter is equivalent to the category of left continuous and mono-epi modules over $\mathbf{S}^n$ via the extension and restriction functors, proving the claim.

    Let $W$ be a left continuous and mono-epi module over $\mathbf{S}^n$.
    We first show that $V = \operatorname{ext}^L(W)$ has a lower semicontinuous module $s\mapsto V_{(s,s)}$ on the diagonal.
    Let $s\in\mathbf{R}^n$.
    Since colimits commute with colimits, we can compute \[\underset{\varepsilon >0}{\operatorname{colim}}~V_{(s-\varepsilon,s-\varepsilon)} = \underset{\varepsilon >0}{\operatorname{colim}}~\underset{\delta >0}{\operatorname{colim}}~W_{(s-\varepsilon-\delta,s-\varepsilon)} = \underset{\delta >0}{\operatorname{colim}}~\underset{\varepsilon >0}{\operatorname{colim}}~ W_{(s-\varepsilon-\delta,s-\varepsilon)} \cong \underset{\delta >0}{\operatorname{colim}}~W_{(s-\delta,s)} \cong V_{(s,s)}\] which proves lower semicontinuity.
    This shows that the extension functor maps into the desired category.
    First applying $\operatorname{ext}^L$ and then restricting is clearly isomorphic to the identity.
    Conversely, let $V$ be a mono-epi module with a lower semicontinuous module on the diagonal.
    Dually, by the mono-epi property, the modules $V_{(s-\delta,s)}$ are the image of $V_{(s-\delta,s-\delta)}$ in $V_{(s,s)}$.
    By lower semicontinuity, we have
    \[V_{(s,s)} \cong \underset{\varepsilon >0}{\operatorname{colim}}~V_{(s-\varepsilon,s-\varepsilon)} \cong \underset{\varepsilon >0}{\operatorname{colim}}~V_{(s-\varepsilon,s)},\] 
    which implies that $\operatorname{ext}^L(\operatorname{res})(V)$ is isomorphic to $V$ via the universal property of the colimit.
    This is clearly natural in $V$. 
  \end{proof}
\end{proposition}

\begin{corollary}
  \label{corollary:decomposition-qtLsc}
  The category $\mathbf{qtLsc}_n$ has essentially unique additive decompositions into indecomposables with local endomorphism rings.

  \begin{proof}
By the previous proposition, the category $\mathbf{qtLsc}_n$ is equivalent to the category of pointwise finite-dimensional left continuous mono-epi modules over $\mathbf{S}^n$.
The category of pointwise finite-dimensional modules over any poset has essentially unique decompositions into indecomposables with local endomorphism rings.
Hence we immediately see that all endomorphism rings of the indecomposables in the category of left continuous mono-epi modules over $\mathbf{S}^n$ are local.
This implies that any decomposition into indecomposables is essentially unique.

To obtain a decomposition, let $V\in\mathbf{qtLsc}$ and let $W = (\operatorname{res}\circ \operatorname{im})(V)$.
By \cite{botnan19}, we can decompose as
\[W \cong \bigoplus_{i\in I}W_i,\]
where the $W_i$ are pointwise finite-dimensional over $\mathbf{S}^n$ and have local endomorphism rings.
It remains to verify that the $W_i$ are left continuous and mono-epi.
The functor $\operatorname{ext}^L$ is defined by a colimit on the diagonal and the identity everywhere else.
We know that $\operatorname{ext}^L(W)$ has the mono-epi property by \cref{proposition:equivalence-lsc-mono-epi}, and we have \[\operatorname{ext}^L(W)  \cong \operatorname{ext}^L\left(\bigoplus_{i\in I}W_i\right) \cong \bigoplus_{i\in I} \operatorname{ext}^L(W_i)\] since colimits commute with colimits.
Since they sum to a mono-epi module, it follows that the $\operatorname{ext}^L(W_i)$ have the mono-epi property.
By \cref{lemma:mono-epi-left-continuous}, it follows that the constituents of the decomposition remain left continuous with the mono-epi property.
 \end{proof}
\end{corollary}

\subsection{Multiplicative Decomposition for Upper Semicontinuous Modules}

In the upper semicontinuous case, we dually study the decomposition as a direct product, which we refer to as a multiplicative decomposition.
The previously cited \cite{Schmahl22} treats this in the one-parameter case.
To generalize to the $n$-parameter case, we start out with a lemma, whose proof for the one-parameter case in \cite{Schmahl22} once again carries over almost verbatim to the multiparameter case:

\begin{lemma}
  Let $V_i,i\in I$ be a set of persistence modules so that $\prod_{i\in I}V_i$ is q-tame.
  Then the canonical map \[\varphi:\bigoplus_{i\in I}V_i \to \prod_{i\in I}V_i\] is a weak equivalence.

  \begin{proof}
    Clearly, the kernel of $\varphi$ is trivial.
    It remains to show that the cokernel is ephemeral.
    Let $\varepsilon>0$.
    We have the commutative diagram 

\[\begin{tikzcd}
	{\bigoplus_{i\in I} V_{i,s}} & {\prod_{i\in I}V_{i,s}} & {\operatorname{cok} \varphi_s} \\
	{\bigoplus_{i\in I}} & {\prod_{i\in I}V_{i,s-\varepsilon}} & {\operatorname{cok} \varphi_{s-\varepsilon}}
	\arrow["{\varphi_s}", from=1-1, to=1-2]
	\arrow["{p_s}", from=1-2, to=1-3]
	\arrow["{\sigma_{s-\varepsilon,s}}", from=2-1, to=1-1]
	\arrow["{\varphi_{s-\varepsilon}}"', from=2-1, to=2-2]
	\arrow["{\pi_{s-\varepsilon,s}}", from=2-2, to=1-2]
	\arrow["{p_{s-\varepsilon}}"', from=2-2, to=2-3]
	\arrow["{\gamma_{s-\varepsilon,s}}"', from=2-3, to=1-3]
\end{tikzcd}\] where the vertical arrows are given by the structure maps.
Now $p_{s-\varepsilon}$ is epic, so it suffices to show that \[p_s\circ \pi_{s,s-\varepsilon} = \gamma_{s- \varepsilon,s}\circ p_{s-\varepsilon} = 0.\]
But $p_s\circ\varphi_s$ vanishes, so it suffices to show that $\pi_{s-\varepsilon,s}$ factors over $\varphi_s$.
We can insert the respective images in the above diagram to obtain the diagram

\[\begin{tikzcd}
	{\bigoplus_{i\in I} V_{i,s}} & {\prod_{i\in I}V_{i,s}} \\
	{\bigoplus_{i\in I}\operatorname{im} V_{i,s-\varepsilon,s}} & {\prod_{i\in I}\operatorname{im} V_{i,s-\varepsilon,s}} \\
	{\bigoplus_{i\in I}} & {\prod_{i\in I}V_{i,s-\varepsilon}}
	\arrow["{\varphi_s}", from=1-1, to=1-2]
	\arrow[from=2-1, to=1-1]
	\arrow["\psi", from=2-1, to=2-2]
	\arrow[from=2-2, to=1-2]
	\arrow["{\sigma_{s-\varepsilon,s}}", from=3-1, to=2-1]
	\arrow["{\varphi_{s-\varepsilon}}"', from=3-1, to=3-2]
	\arrow["{\pi_{s-\varepsilon,s}}", from=3-2, to=2-2]
\end{tikzcd}\] where $\psi$ is the inclusion of the direct sum into the product.
We assume that $\prod_{i\in I}V_i$ is q-tame, and hence the image of $\pi_{s-\varepsilon,s}$ is finite.
This implies that $\psi$ is an isomorphism, and we can therefore factor the structure map as
\[\begin{tikzcd}
	{\prod_{i\in I}V_{i,s-\varepsilon}} & {\operatorname{im} \pi_{s-\varepsilon,s}} & {\operatorname{im} \sigma_{s-\varepsilon,s}} & {\bigoplus_{i\in I}V_{i,s}} & {\prod_{i\in I}V_{i,s}}
	\arrow[from=1-1, to=1-2]
	\arrow["{\psi^{-1}}", from=1-2, to=1-3]
	\arrow[from=1-3, to=1-4]
	\arrow["{\varphi_s}", from=1-4, to=1-5]
\end{tikzcd}\] as desired.
  \end{proof}
\end{lemma}

\begin{lemma}\label{lemma:weak-equivalence-qtUsc}
  Let $\varphi:V\to W$ be a weak equivalence.
  Then $\underline\varphi$ is an isomorphism.
  
  \begin{proof}
    This follows from the equivalence $\mathbf{Ob}_n\simeq\mathbf{Usc}_n$.
  \end{proof}
\end{lemma}

We now obtain the generalized version of the main theorem of \cite{Schmahl22}, again with an analogous proof, which we include for completeness.

\begin{proposition}\label{proposition:decomposition-qtUsc}

  Let $V\in\mathbf{qtUsc}_n$.
  Then there is a decomposition \[V \cong \prod_{i\in I}V_i\] where the $V_i\in\mathbf{qtUsc}_n$ are indecomposable and unique up to reordering.
  
  \begin{proof}
  We compute \[V\cong\underline{V}\cong \underline{\overline V}\cong \underline{\bigoplus_{i\in I} V_i}\] where we used the equivalence $\mathbf{qtLsc}_n\simeq\mathbf{qtUsc}_n$ and the existence of additive decomposition from this chapter.
  We now observe that $\prod_{i\in I}V_i$ is q-tame.
  This follows from the fact that if \[\prod_{i\in I}\operatorname{im} V_{i,s-\varepsilon,s}\] has infinite dimension then the direct sum has, too.
  Thus \[\varphi:\bigoplus_{i\in I}V_i\to\prod_{i\in I}V_i\] is a weak equivalence, and hence \[\underline{\bigoplus_{i\in I}V_i} \cong \underline{\prod_{i\in I}V_i}\] by~\cref{lemma:weak-equivalence-qtUsc}.
  Limits commute with limits, so
  \[\underline{\prod_{i\in I}V_i} \cong \prod_{i\in I} \underline{V_i},\]
  and together we have
  \[V\cong \prod_{i\in I}\underline{V_i}.\]
  The indecomposability of $\underline{V_i}$ follows from the fact that biproducts commute with limits.
  Uniqueness follows from the equivalence $\mathbf{qtLsc}_n\simeq\mathbf{qtUsc}_n$ and uniqueness of additive decompositions. 
  \end{proof}
\end{proposition}

From the equivalence of categories shown above, we now have the first part of our main theorem: 

\begin{corollary}
  In the observable category of q-tame modules, every object decomposes essentially uniquely into a biproduct of indecomposables with local endomorphism rings.

  \begin{proof}
    In the proof of \cref{proposition:decomposition-qtUsc} we have shown that for $V\in\mathbf{qtUsc}_n$ there is a decomposition \[V\cong \prod_{i\in I}\underline{V_i}\] where the $\underline{V_i}$ are indecomposable and unique up to reordering, which moreover comes from the additive decomposition \[V \cong \bigoplus_{i\in I} V_i.\] But in the observable category we have $V_i \cong \underline{V_i}$, which shows that $V$ decomposes as a biproduct of indecomposables.
  \end{proof}
\end{corollary}

\begin{example}\label{example:no-decomposition}
  We now show that such decompositions do not always exist when we drop the assumption of q-tameness.
  The following example~\cite[Remark 2.9]{strandstab}, originally due to Lesnick, does not admit an interval decomposition.
  Indeed, the argument given there generalizes to show that it does not admit any decomposition into indecomposables.
  We let \[L_0 = \mathbb{k}\] and \[L_{-s} = \bigoplus_{k=s}^\infty \mathbb{k}\] when $s\geq 1$, which we interpret as almost everywhere vanishing sequences with zeros before the $s$-th component.
  For $s\leq t < 0$ we take as structure maps the inclusions, and for the maps $L_s\to L_0$ we take the summation.
  Now consider any decomposition \[L=\bigoplus_{i\in I}M_i\] into indecomposables.
  Among them is exactly one module that is nonvanishing at $s = 0$, say $M$.
  We now show that $M$ is an interval module.
  Assume first that $M$ contains at some first index $t<0$ some nonzero sequence summing to zero. Then this sequence is sent to $0$ at index $s = 0$.
  We can thus write $M = [t,0)\oplus M'$, and by indecomposability of $M$ we must have $M' = 0$.
  If $M$ contains no such sequence, then it can at most be one-dimensional at any index, since the sequences with sum zero have codimension one.
  By indecomposability, it follows that $M$ must be an interval.
  However, since the intersection of all $L_{s}$ with $s\leq -1$ is $0$, it follows that $M$ cannot be $(-\infty,0]$.
  Moreover, we can also conclude that $M\neq [s,0]$, since otherwise the structure map $L_{s-1,0}$ would vanish, leading to a contradiction.
\end{example}

\section{Metric Constructions}

In this section, we discuss the metric aspects of \cref{theorem:main-theorem}; specifically, we show that in the observable category of of q-tame persistence modules, the interleaving distance defines a complete metric on the space and reflects isomorphisms.

\subsection{Completeness}

The goal of this section is to show that the space $\mathbf{qtLsc}_n$ of q-tame lower semicontinuous is complete, thereby establishing (P3) from the main theorem. 

\begin{lemma}
  Let $V^{(k)}$ be a sequence of $n$-parameter modules that is Cauchy for the interleaving distance.
  Then $V^{(k)}$ admits a metric limit under $d_I$, which is constructed as a colimit from the $V^{(k)}$ and the interleaving morphisms.

  \begin{proof}
    Since $V^{(k)}$ is a Cauchy sequence, any accumulation point is already a limit, and we may replace $V^{(k)}$ by a subsequence with summable distances: For every $k$ there is some minimal $N_k$ so that all elements above index $N_k$ have pairwise distance less than $\frac{1}{2^k}$, and the subsequence indexed by the $N_k$ now has summable distances.
    We can thus assume without loss of generality that $V^{(k)}$ and $V^{(k+1)}$ are $\varepsilon_{k}$-interleaved with \[\sum^\infty_{k = 1}\varepsilon_k<\infty.\]
    Write \[\delta_k = \sum_{m\geq k} \varepsilon_m\] for the tail sums.
    We now fix $\varepsilon_{k}$-interleavings $(f_k,g_k)$, where $f_k:V^{(k)}\to V^{(k+1)}[\varepsilon_{k}]$ and $g_k:V^{(k+1)}\to V^{(k)}[\varepsilon_{k}]$.
    We can then form the colimit \[V = \underset{k\geq 1}{\operatorname{colim}}~V^{(k)}[-\delta_k]\] along the morphisms $f_k[-\delta_k]$.
    Since $-\delta_k+\varepsilon_k = -\delta_{k+1}$ the shifted map $f_{k}[-\delta_k]$ defines a map $V^{(k)}[-\delta_k]\to V^{(k+1)}[-\delta_{k+1}]$.
    We now fix $k\geq 1$. We show that $V$ and $V^{(k)}$ are $\delta_k$-interleaved.
    Since $(\delta_k)_k\to 0$ as $k\to\infty$ this then yields the lemma. We first observe that by cofinality of $\{m \geq k\} \subset \mathbb N$, $V$ is the colimit of the diagram 
\[\begin{tikzcd}[row sep=large, column sep=large]
	{V^{(k)}[-\delta_k]} & {V^{(k+1)}[-\delta_{k+1}]} & {V^{(k+2)}[-\delta_{k+2}]} & \dots
	\arrow["{f_k[-\delta_k]}", from=1-1, to=1-2]
	\arrow["{f_{k+1}[-\delta_{k+1}]}", from=1-2, to=1-3]
	\arrow[from=1-3, to=1-4]
\end{tikzcd}\]
The diagram 
\[\begin{tikzcd}[row sep=large, column sep=large]
	{V^{(k)}[-\delta_k]} & {V^{(k+1)}[-\delta_{k+1}]} & {V^{(k+2)}[-\delta_{k+2}]} & \dots \\
	{V^{(k)}[\delta_k]} & {V^{(k+1)}[\delta_{k+1}]} & {V^{(k+2)}[\delta_{k+2}]} & \dots
	\arrow["{f_k[-\delta_k]}", from=1-1, to=1-2]
	\arrow["{\eta_{2\delta_k}}"', from=1-1, to=2-1]
	\arrow["{f_{k+1}[-\delta_{k+1}]}", from=1-2, to=1-3]
	\arrow["{\eta_{2\delta_{k+1}}}"', from=1-2, to=2-2]
	\arrow[from=1-3, to=1-4]
	\arrow["{\eta_{2\delta_{k+2}}}"', from=1-3, to=2-3]
	\arrow["{g_k[\delta_{k+1}]}", from=2-2, to=2-1]
	\arrow["{g_{k+1}[\delta_{k+2}]}", from=2-3, to=2-2]
	\arrow[from=2-4, to=2-3]
\end{tikzcd}\] commutes by the interleaving relations $g_m[\varepsilon_m]\circ f_m = \eta_{2\varepsilon_m}$ for $m\geq k$.
Composing the vertical maps with the horizontal maps in the bottom row yields a cocone from the top row to $V^{(k)}[\delta_k]$, and the universal property of $V$ as the colimit of the top row gives us a map $f:V\to V^{(k)}[\delta_k]$.
%
%
%
The diagram 
\[\begin{tikzcd}[row sep=large, column sep=huge]
	{V^{(k)}} & {V^{(k+1)}[\delta_{k+1}-\delta_k]} & {V^{(k+2)}[\delta_{k+2}-\delta_k]} & \dots \\
	{V^{(k)}} & {V^{(k+1)}[\delta_k-\delta_{k+1}]} & {V^{(k+2)}[\delta_k-\delta_{k+2}]} & \dots
	\arrow["{\operatorname{id}}", from=1-1, to=2-1]
	\arrow["{g_k[\delta_{k+1}-\delta_k]}"', from=1-2, to=1-1]
	\arrow["{\eta_{2(\delta_k - \delta_{k+1})}}", from=1-2, to=2-2]
	\arrow["{g_{k+1}[\delta_{k+2}-\delta_k]}"', from=1-3, to=1-2]
	\arrow["{\eta_{2(\delta_k - \delta_{k+2})}}", from=1-3, to=2-3]
	\arrow[from=1-4, to=1-3]
	\arrow["{f_{k}}"', from=2-1, to=2-2]
	\arrow["{f_{k+1}[\delta_k - \delta_{k+1}]}"', from=2-2, to=2-3]
	\arrow[from=2-3, to=2-4]
\end{tikzcd}\] commutes; this follows from the interleaving relations $f_m[\varepsilon_m]\circ g_m = \eta_{2\varepsilon_m}$ for $m\geq k$.
Observe that the bottom row has colimit $V[\delta_k]$.
Once more, this diagram yields a morphism $g:V^{(k)}\to V[\delta_k]$.
Shifting the second diagram by $\delta_k$, we can stack both diagrams vertically: 
\[\begin{tikzcd}[row sep=large, column sep=huge]
	{V^{(k)}[-\delta_k]} & {V^{(k+1)}[-\delta_{k+1}]} & {V^{(k+2)}[-\delta_{k+2}]} & \dots \\
	{V^{(k)}[\delta_k]} & {V^{(k+1)}[\delta_{k+1}]} & {V^{(k+2)}[\delta_{k+2}]} & \dots \\
	{V^{(k)}[2\delta_k]} & {V^{(k+1)}[2\delta_k-\delta_{k+1}]} & {V^{(k+2)}[2\delta_k-\delta_{k+2}]} & \dots
	\arrow["{f_k[-\delta_k]}", from=1-1, to=1-2]
	\arrow["{\eta_{2\delta_k}}"', from=1-1, to=2-1]
	\arrow["{f_{k+1}[-\delta_{k+1}]}", from=1-2, to=1-3]
	\arrow["{\eta_{2\delta_{k+1}}}"', from=1-2, to=2-2]
	\arrow[from=1-3, to=1-4]
	\arrow["{\eta_{2\delta_{k+2}}}"', from=1-3, to=2-3]
	\arrow["{\operatorname{id}}", from=2-1, to=3-1]
	\arrow["{g_k[\delta_{k+1}]}", from=2-2, to=2-1]
	\arrow["{\eta_{2(\delta_k - \delta_{k+1})}}", from=2-2, to=3-2]
	\arrow["{g_{k+1}[\delta_{k+2}]}", from=2-3, to=2-2]
	\arrow["{\eta_{2(\delta_k - \delta_{k+2})}}", from=2-3, to=3-3]
	\arrow[from=2-4, to=2-3]
	\arrow["{f_k[2\delta_k]}"', from=3-1, to=3-2]
	\arrow["{f_{k+1}[2\delta_k-\delta_{k+1}]}"', from=3-2, to=3-3]
	\arrow[from=3-3, to=3-4]
\end{tikzcd}\]
Each vertical column composes to the corresponding shift map $\eta_{2\delta_k}$.
The maps $f$ and $g$ were obtained from universal constructions, hence $g[\delta_k]\circ f$ is obtained in the same way from this stacked diagram.
The colimit of shifts is again a shift, so altogether we obtain $g[\delta_k]\circ f= \eta_{2\delta_k}$.
Stacking the diagrams in reversed order by shifting the first one gives us $f[\delta_k]\circ g = \eta_{2\delta_k}$ and hence the desired $\delta_k$-interleaving between $V$ and $V^{(k)}$. 
\end{proof}
\end{lemma}

\begin{remark}
  The construction can also be carried out via a limit instead of a colimit, reversing the roles of the $f_k$ and $g_k$. 
\end{remark}

It now remains to verify that the limit we have constructed remains in $\ourspace{n}$ to obtain the result.
We will use the following lemma:

\begin{lemma}
\label{lemma:q-tame-shift}
A module $V$ is q-tame if for any $\varepsilon > 0$ 
there is some q-tame module $W$ so that
the shift $\eta_\varepsilon$ factorizes over $W$.
\begin{proof}
Let $\varepsilon>0$. We show that $V_{s-\varepsilon,s}$ has finite rank.
By assumption we have a factorization 
\[\begin{tikzcd}
  V && {V[\frac{\varepsilon}{2}]} \\
  & W
  \arrow["{\eta_{\frac{\varepsilon}{2}}}", from=1-1, to=1-3]
  \arrow[from=1-1, to=2-2]
  \arrow[from=2-2, to=1-3]
\end{tikzcd}
\]
where $W$ is q-tame.
We obtain the commutative diagram
\[\begin{tikzcd}
	{V_{s-\varepsilon}} & {W_{s-\varepsilon}} & {V_{s-\frac{\varepsilon}{2}}} \\
	{V_{s-\frac{\varepsilon}{2}}} & {W_{s-\frac{\varepsilon}{2}}} & {V_{s}}
	\arrow[from=1-1, to=1-2]
	\arrow[from=1-1, to=2-1]
	\arrow[from=1-2, to=1-3]
	\arrow[from=1-2, to=2-2]
	\arrow[from=1-3, to=2-3]
	\arrow[from=2-1, to=2-2]
	\arrow[from=2-2, to=2-3]
\end{tikzcd}\] Now since $W$ is q-tame the structure map $W_{s-\varepsilon}\to W_{s-\frac{\varepsilon}{2}}$ has finite rank and $V_{s-\varepsilon}\to V_s$ factors over it.
Since $\varepsilon$ was arbitrary this yield the lemma.
\end{proof}
\end{lemma}

\begin{proposition}
  The space $\ourspace{n}$ is complete under the interleaving distance.

  \begin{proof}
    Since colimits commute with colimits we immediately see that the limit we have constructed above remains lower semicontinuous.
    Furthermore it follows from Lemma~\ref{lemma:q-tame-shift} that $V$ is q-tame, since the $V^{(k)}$ are q-tame and the $\delta_k$-interleavings provide factorizations for arbitrarily small shifts. 
  \end{proof}
\end{proposition}

\subsection{Upper Semicontinuity of the Persistent Set of Interleavings}

Our goal for the next two sections is the existence of an interleaving which realizes the interleaving distance: if $d_I(V,W) = \delta$ in the space of q-tame lower semicontinuous modules then we want to produce a $\delta$-interleaving of $V$ and $W$.
As a corollary we will then immediately obtain the second part of \cref{theorem:main-theorem}.
The current section shows that interleavings of lower semicontinuous persistence modules form an upper semicontinuous persistent set, following a strategy sketched in \cite[Section 4.5]{scoccola}.

\begin{notation}
  Let $V,W$ be fixed persistence modules. Let $r\geq 0$. We write \[\mathbb{I}(V,W)(r) = \{r\textrm{-interleavings of }V\textrm{ and }W\}.\]
  For $r<r'$ we have a map $\mathbb{I}(V,W)(r)\to\mathbb{I}(V,W)(r')$ via $(f,g)\mapsto (\eta_{r'-r}\circ f,\eta_{r'-r}\circ g)$.
  This gives $\mathbb{I}(V,W)$ the structure of a persistent set over $\mathbf{R}_{\geq 0}$.
\end{notation}

  Let $V,W$ be fixed persistence modules.
  We consider the assignment \[\varepsilon\mapsto \operatorname{Hom}(V,W[\varepsilon]).\]
  Whenever $\varepsilon <\varepsilon'$, we can transform any morphism $V\to W[\varepsilon]$ to a morphism $V\to W[\varepsilon']$ by postcomposition with $\eta_{\varepsilon'-\varepsilon}$.
  Thus $\operatorname{Hom}(V,W[\cdot])$ is a persistent set over $\mathbf{R}_{\geq 0}$.

\begin{proposition}\label{prop:upper-semicontinuity-interleavings}
  If $V$ is lower semicontinuous or $W$ is upper semicontinuous, then
  the persistent set $\mathbb{I}(V,W)$ is upper semicontinuous.
  \begin{proof}
    Consider the morphisms \[\xi_\varepsilon:\operatorname{Hom}(V,W[\varepsilon])\times\operatorname{Hom}(W,V[\varepsilon])\to \operatorname{Hom}(V,V[2\varepsilon])\times \operatorname{Hom}(W,W[2\varepsilon])\] \[(f,g)\mapsto (g[\varepsilon]\circ f,f[\varepsilon]\circ g)\] which defines a morphism $\xi$ of the respective persistence sets.
    We denote by $\operatorname{pt}$ the terminal object in the category of persistent sets; it has a unique element $*$ at every index.
    We define a morphism $\operatorname{pt}\to \operatorname{Hom}(V,V[2\varepsilon])\times \operatorname{Hom}(W,W[2\varepsilon])$ at $\varepsilon\geq 0$ by mapping ${*}\mapsto (\eta_{2\varepsilon}^V,\eta^W_{2\varepsilon})$.
    Since limits of functors are computed levelwise we get a pullback square 

\[\begin{tikzcd}
	{\mathbb{I}(V,W)} & {\operatorname{Hom}(V,W[\cdot])\times\operatorname{Hom}(W,V[\cdot])} \\
	{\operatorname{pt}} & {\operatorname{Hom}(V,V[2\cdot])\times \operatorname{Hom}(W,W[2\cdot])}
	\arrow[from=1-1, to=1-2]
	\arrow[from=1-1, to=2-1]
	\arrow[from=1-2, to=2-2]
	\arrow[from=2-1, to=2-2]
\end{tikzcd}\] Since limits commute with limits it now suffices to show that the persistent sets involved in the pullback diagram are upper semicontinuous.
For this we only need to show that $\operatorname{Hom}(V,W[\cdot])$ (and dually, $\operatorname{Hom}(W,V[\cdot])$) is upper semicontinuous at any $\delta\geq 0$.
In the case that $V$ is lower semicontinuous, we have \[ V[\delta] \cong \underset{\varepsilon>\delta}{\operatorname{colim}}~V[-\varepsilon],\] 
and since the $\operatorname{Hom}$ functor sends limits to colimits in the first variable, we get
\[\lim_{\varepsilon>\delta}\operatorname{Hom}(V,W[\varepsilon]) = \lim_{\varepsilon>\delta}\operatorname{Hom}(V[-\varepsilon],W) \cong \operatorname{Hom}(\underset{\varepsilon>\delta}{\operatorname{colim}}~V[-\varepsilon],W).  \] In the case that $W$ is upper semicontinuous, we dually use the fact that the Hom-functor preserves limits in the second variable to obtain upper semicontinuity of the Hom-persistence set by an analogous argument.
  \end{proof}
\end{proposition}

We therefore have that
\[\mathbb{I}(V,W)(\delta)  \cong \lim_{\varepsilon>\delta}~\mathbb{I}(V,W)(\varepsilon)\]
and thus by a cofinality argument
\[\mathbb{I}(V,W)(\delta) \cong \lim_{n\geq 1} \mathbb{I}(V,W)\left(\delta + \textstyle \frac{1}{n}\right)\]
(or any other countable cofinal set).
In general, the limit of infinite nonempty sets can be empty, so this is not yet sufficient for us to conclude $\mathbb{I}(V,W)(\delta) \neq \emptyset$ whenever $d_I(V,W) = \delta$.
In the next section we will turn $\mathbb{I}(V,W)$ into a persistent topological space to address this issue.

\subsection{Existence of Metric-Realizing Interleavings}

In this section we show that the interleaving distance indeed reflects isomorphisms in the q-tame and observable setting.
More generally, we establish that if $d_I(V,W) = \delta$, then there is a $\delta$-interleaving between $V$ and $W$. 
Recall that \cref{prop:upper-semicontinuity-interleavings}  establishes that any $\delta$-interleaving is a limit of $\varepsilon$-interleavings for $\varepsilon>\delta$.
Conversely, however, it is not clear a priori that the limit of $\varepsilon$-interleavings exists, since the limit of a tower \[\dots\to X_3\to X_2\to X_1\] of infinite nonempty sets can be empty, as illustrated by the following simple example.

\begin{example}
  Let $X_n = \{n,n+1,n+2,\dots\}$ with the inclusions \[\dots\to X_3\to X_2\to X_1.\]
  We then have \[\lim_{k\geq 1} X_k = \bigcap^\infty_{k=1}X_k = \emptyset , \] even though every individual $X_n$ is nonempty. 
\end{example}

We thus need some additional properties to guarantee that the limit is inhabited.
Our strategy for this section is to apply the following theorem to obtain a suitable interleaving: 

\begin{theorem}[Stone~\cite{stone}]
  Consider the poset $\mathbf{N}^{\operatorname{op}} = (\mathbb N, \geq)$.
  Let $X:\mathbf{N}^{\operatorname{op}}\to\mathbf{Top}$ be a tower of topological spaces such that \begin{enumerate}
    \item every $X_s$ is compact and $T_0$, and
    \item every $X_{st}$ is closed.
  \end{enumerate}
  Then \(\lim X_\bullet \neq \emptyset.\)
\end{theorem}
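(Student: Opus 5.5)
The plan is a Zorn's lemma argument on compatible families of closed subsets. The aim is to produce a compatible family of singletons, which is exactly a point of $\lim X_\bullet$. Throughout I assume, as is implicit in the statement, that every $X_s$ is nonempty; otherwise the claim is false. For $t \geq s$ write $X_{ts}\colon X_t \to X_s$ for the structure map.

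Call a family $C = (C_s)_{s\in\mathbb N}$ \emph{admissible} if each $C_s \subseteq X_s$ is nonempty and closed, and $X_{ts}(C_t)\subseteq C_s$ for all $t\geq s$. The family $(X_s)_s$ is admissible. Order admissible families by componentwise reverse inclusion.

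Step 1: every chain $(C^\alpha)_\alpha$ has an upper bound, namely $C_s = \bigcap_\alpha C^\alpha_s$. Each $C_s$ is closed. It is nonempty because the $C^\alpha_s$ form a chain of nonempty closed subsets of the compact space $X_s$, so they have the finite intersection property. Compatibility holds since $X_{ts}(\bigcap_\alpha C^\alpha_t)\subseteq\bigcap_\alpha X_{ts}(C^\alpha_t)\subseteq \bigcap_\alpha C^\alpha_s$. Zorn's lemma then gives a minimal admissible family $C$.

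Step 2: show that $C$ is surjective, i.e.\ $X_{ts}(C_t)=C_s$ for all $t \geq s$. Fix $s$ and set $D_s=\bigcap_{t\geq s}X_{ts}(C_t)$, leaving $D_r=C_r$ for $r\neq s$. Each $X_{ts}(C_t)$ is closed because $X_{ts}$ is a closed map, and these sets decrease in $t$. Hence $D_s$ is closed, and it is nonempty by compactness. It is also compatible: $X_{ts}(C_t)\subseteq D_s$ for $t \geq s$, since for $t'\geq t$ we have $X_{ts}(C_t)\supseteq X_{ts}(X_{t't}(C_{t'})) = X_{t's}(C_{t'})$; and $X_{sr}(D_s)\subseteq X_{sr}(C_s)\subseteq C_r$ for $r < s$. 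Minimality forces $D_s = C_s$. I will then upgrade this to $X_{ts}(C_t)=C_s$ by a direct minimality argument. Replace $C_t$ by itself and $C_s$ by $X_{ts}(C_t)$; for $r<s$ replace $C_r$ by $X_{tr}(C_t)$; for $t > u > s$ replace $C_u$ by $X_{tu}(C_t)$; for $u > t$ keep $C_u$. All these images are closed because the maps are closed, and compatibility is immediate. This is the step needing the most care with indices, but each check is routine.

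Step 3: show that every $C_s$ is a singleton. Fix $s$ and $x\in C_s$, and put $F=\overline{\{x\}}\subseteq C_s$. Define a new family $E$ as follows:
\begin{itemize}
\item $E_t = C_t\cap X_{ts}^{-1}(F)$ for $t\geq s$, which is closed by continuity and nonempty by the surjectivity from Step 2;
\item $E_r = X_{sr}(F)$ for $r<s$, which is closed because $X_{sr}$ is a closed map.
\end{itemize}
Compatibility of $E$ follows directly from functoriality of the tower, so $E$ is admissible and refines $C$. By minimality $E = C$, hence $C_s=\overline{\{x\}}$ for every $x\in C_s$. So any two points of $C_s$ have the same closure, and the $T_0$ axiom forces $C_s=\{x_s\}$. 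Compatibility then gives $X_{ts}(x_t)=x_s$ for all $t \geq s$, so $(x_s)_s\in\lim X_\bullet$, which is therefore nonempty.

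The main obstacle is Step 2. Without Hausdorffness, fibres need not behave well, so surjectivity has to be extracted from minimality together with closedness of the maps rather than from a fibre-compactness argument. Step 3 depends on surjectivity to guarantee $E_t \neq \emptyset$.
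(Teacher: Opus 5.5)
The paper does not prove this statement; it cites Stone and uses the result only as a black box through \cref{cor:stone_q_tame}. Your Zorn's lemma argument on minimal admissible families of nonempty closed sets is a correct, self-contained proof, which the paper does not provide. It also makes visible where each hypothesis enters:
\begin{itemize}
\item compactness is used only in Step 1, to get nonempty intersections along chains;
\item closedness of the bonding maps is used in Steps 2 and 3, to keep the images $X_{tu}(C_t)$ and $X_{sr}(F)$ closed;
\item the $T_0$ axiom is used only at the very end.
\end{itemize}
You are right that nonemptiness of the $X_s$ must be assumed. In the paper's application it holds because $d_I(V,W)=\delta$ makes every $\mathbb{I}(V,W)(\delta+\tfrac1n)$ nonempty. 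Steps 2 and 3 split indices into those below and above a fixed index, so they use that $\mathbb{N}$ is totally ordered. The argument is therefore tailored to towers, which is all the paper needs.

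One correction concerns the first half of Step 2, with $D_s=\bigcap_{t\geq s}X_{ts}(C_t)$; it does not work as written. Compatibility of $D$ requires $X_{ts}(C_t)\subseteq X_{t's}(C_{t'})$ for $t'>t$. Your computation $X_{ts}(C_t)\supseteq X_{t's}(C_{t'})$ proves the opposite inclusion. The inclusion you actually need is essentially the surjectivity you are trying to establish, so the argument is circular.

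This paragraph is, however, superfluous. The direct minimality argument that follows is complete on its own: set $C'_u=X_{tu}(C_t)$ for $u\leq t$ and $C'_u=C_u$ for $u\geq t$. The only nontrivial compatibility case is $u<t<v$, which follows from $X_{vu}(C_v)=X_{tu}(X_{vt}(C_v))\subseteq X_{tu}(C_t)$. Minimality then gives $C'=C$, hence $X_{ts}(C_t)=C_s$. You should delete the $D_s$ construction and keep only this argument; with that change, Steps 1--3 constitute a valid proof.
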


We first show the following corollary, which allows us to replace the condition of compactness of spaces by compactness of the images of the structure maps.

\begin{corollary}\label{cor:stone_q_tame}
  If $X:\mathbf{N}^{\operatorname{op}}\to\mathbf{Top}$ is a diagram of topological spaces such that
  \begin{enumerate}
    \item $X$ is \emph{q-tame}, that is, all nontrivial structure maps have compact and $T_0$ image, and
    \item all structure maps are closed. 
  \end{enumerate}
  Then \[\lim X_\bullet\neq\emptyset.\]

  \begin{proof}
    We let $\widehat X_s = \operatorname{im} X_{s+1,s}\subseteq X_s$ and define $\widehat X_{s+1,s}$ by restricting $X_{s+1,s}:X_{s+1}\to X_s$ to $\operatorname{im}(X_{s+2,s+1})$. By construction, this map lands in $\widehat X_s$.
    The diagram $\widehat X$ satisfies the conditions of the theorem of Stone by construction. Hence \[\emptyset\neq \lim \widehat X_\bullet = \lim X_\bullet\] which shows the corollary.
  \end{proof}
\end{corollary}

The goal is to apply \cref{cor:stone_q_tame}to the tower of interleavings $\mathbb{I}(V,W)(\delta + \frac{1}{n})$, topologized as a subspace of the product of the Hom-sets $\operatorname{Hom}(V,W[\delta + \frac{1}{n}])\times\operatorname{Hom}(W,V[\delta + \frac{1}{n}])$.
In \cite[Theorem 4.5.14]{scoccola}, it is shown that if the Hom-sets admint suitable topologies, then the existence of metric-realizing interleavings follows from the upper semicontinuity of the persistent set of interleavings and the theorem of Stone. 
Our approach is similar; however, as explained later in \cref{rmk:topologize-product}, we have to deviate from this approach slightlyand equip the product of the Hom-sets with a topology different from the product topology.

\begin{proposition}
\label{prop:hom-space-topologies}
  Assume that all sets \[\operatorname{Hom}(V,W)\times\operatorname{Hom}(V',W')\] are equipped with topologies so that
  \begin{enumerate}
    \item all spaces are $T_1$,
    \item the maps $\xi_\varepsilon$ are continuous, and
    \item postcomposition with a pair of shifts \[S_\varepsilon = (\eta_\varepsilon\circ -,\eta_\varepsilon\circ -):\operatorname{Hom}(V,W)\times\operatorname{Hom}(V',W')\to \operatorname{Hom}(V,W[\varepsilon])\times\operatorname{Hom}(V',W'[\varepsilon])\] is continuous, closed, and has compact image.
  \end{enumerate} 
If $V,W\in \mathbf{qtLsc}_n$ satisfy $d_I(V,W) = \delta$, then $\mathbb{I}(V,W)(\delta) \neq \emptyset$. 

\begin{proof}
  The goal is to be able to apply Stone's theorem to $\mathbb{I}(V,W)$.
  For $\varepsilon>0$ we again consider 
  \[\begin{tikzcd}
    {\mathbb{I}(V,W)(\varepsilon)} & {\operatorname{Hom}(V,W[\varepsilon])\times \operatorname{Hom}(W,V[\varepsilon])} \\
    {\operatorname{pt}} & {\operatorname{Hom}(V,V[2\varepsilon])\times \operatorname{Hom}(W,W[2\varepsilon])}
    \arrow[from=1-1, to=1-2]
    \arrow[from=1-1, to=2-1]
    \arrow[from=1-2, to=2-2]
    \arrow[from=2-1, to=2-2]
  \end{tikzcd}\] which is a pullback square.
  The map $\xi_\varepsilon$ is continuous, so we can topologize $\mathbb{I}(V,W)(\varepsilon)$ with the limit topology.
  By the $T_1$-property $\mathbb{I}(V,W)(\varepsilon)$ is closed in $\operatorname{Hom}(V,W[\varepsilon])\times \operatorname{Hom}(W,V[\varepsilon])$ as the preimage of the pair of shifts by $2\varepsilon$.
  Postcomposition with pair of shifts $(\eta_{\varepsilon'}^W,\eta_{\varepsilon'}^V)$ is continuous, closed, and has compact image.
  Since $\mathbb{I}(V,W)(\varepsilon)$ is closed, the same holds there.
  We conclude that it satisfies the conditions of \cref{cor:stone_q_tame}, and we can conclude $\mathbb{I}(V,W)(\delta)\neq \emptyset$. 
\end{proof}
\end{proposition}
We now show that we can find topologies on the Hom-sets which satisfy the conditions of \cref{prop:hom-space-topologies}. We first reduce to the case of modules with bounded support, from which we can then obtain the general case by a limit construction.

\begin{notation}\label{def:hom-lim}
  Let $k\geq 1$ and let $V,W$ be $n$-parameter persistence modules.
  We let \[\operatorname{Hom}(V,W)\vert_k = \operatorname{Hom}(V\vert_{\mathbf{C}^k},W\vert_{\mathbf{C}^k})\] where $\mathbf{C}^k = [-k,k]^n\subseteq\mathbf{R}^n$.
  As sets we have \[\operatorname{Hom}(V,W) \cong \lim_{k\geq 1}\operatorname{Hom}(V,W)\vert_k\] with the limit being taken along the restriction maps. 
\end{notation}

\begin{lemma}\label{lem:hom-space-topologies-bounded}
  Assume that for each $k \in \mathbb N$ the Hom-sets $\operatorname{Hom}(V,W)\vert_k\times\operatorname{Hom}(V',W')\vert_k$ are equipped with topologies so that the conditions of \cref{prop:hom-space-topologies} hold and so that the restrictions \[\operatorname{Hom}(V,W)\vert_{k+1}\times\operatorname{Hom}(V',W')\vert_{k+1}\to\operatorname{Hom}(V,W)\vert_k\times\operatorname{Hom}(V',W')\vert_k\] are continuous and closed.
  Then there exist topologies on $\operatorname{Hom}(V,W)\times\operatorname{Hom}(V',W')$ which satisfy the conditions of \cref{prop:hom-space-topologies}.
  \begin{proof}
    As stated in \cref{def:hom-lim}, we have \[\operatorname{Hom}(V,W)\times \operatorname{Hom}(V',W') = \lim_{k\geq 1}(\operatorname{Hom}(V,W)\vert_k\times \operatorname{Hom}(V',W')\vert_k)\] as sets.
    We can hence topologize with the usual limit topology.
    The product of $T_1$-spaces remains $T_1$.
    This implies that the limit, as a subspace of a $T_1$-space, is $T_1$ as well.

    For continuity of $\xi_\varepsilon$, observe that 
    \[\begin{tikzcd}
      {\operatorname{Hom}(V,W[\varepsilon])\times\operatorname{Hom}(W,V[\varepsilon])} & {\operatorname{Hom}(V,W[\varepsilon])\vert_k\times\operatorname{Hom}(W,V[\varepsilon])\vert_k} \\
      {\operatorname{Hom}(V,V[2\varepsilon])\times\operatorname{Hom}(W,W[2\varepsilon])} & {\operatorname{Hom}(V,V[2\varepsilon])\vert_k\times\operatorname{Hom}(W,W[2\varepsilon])\vert_k}
      \arrow[from=1-1, to=1-2]
      \arrow["{\xi_\varepsilon}"', from=1-1, to=2-1]
      \arrow["{\xi_\varepsilon^k}", from=1-2, to=2-2]
      \arrow[from=2-1, to=2-2]
    \end{tikzcd}\] commutes. 
    Hence $\xi_\varepsilon$ is continuous as a map constructed from continuous limit data.

    For postcomposition with shifts (i.e., the maps $S_\varepsilon$ from \cref{prop:hom-space-topologies}), we observe that
    \[\begin{tikzcd}
      {\operatorname{Hom}(V,W)\times\operatorname{Hom}(V',W')} & {\operatorname{Hom}(V,W)\vert_k\times\operatorname{Hom}(V',W')\vert_k} \\
      {\operatorname{Hom}(V,W[\varepsilon])\times\operatorname{Hom}(V',W'[\varepsilon])} & {\operatorname{Hom}(V,W[\varepsilon])\vert_k\times\operatorname{Hom}(V',W'[\varepsilon])\vert_k}
      \arrow[from=1-1, to=1-2]
      \arrow["{S_\varepsilon}", from=1-1, to=2-1]
      \arrow["{S_\varepsilon^k}"', from=1-2, to=2-2]
      \arrow["{\operatorname{res}_k}", from=2-1, to=2-2]
    \end{tikzcd}\] commutes, so the same reasoning implies that the maps obtained in this manner are continuous as well.
    This diagram also implies that \[\operatorname{im} S_\varepsilon \cong  \lim_{k\geq 1} \operatorname{im} S_\varepsilon^k\] and more concretely \[\operatorname{im} S_\varepsilon \cong \bigcap_{k=1}^\infty\operatorname{res}_k^{-1}(\operatorname{im} S_\varepsilon^k)\] which means that the image of $S_\varepsilon$ is closed as an intersection of closed sets.
    It follows that \[\prod^\infty_{k=1}\operatorname{res}_k:\operatorname{im} S_\varepsilon \to \prod^\infty_{k=1}\operatorname{im} S_\varepsilon^k\] is a closed embedding and that the right hand side is compact as a product of compact spaces.
    Hence compactness of the image follows.
  \end{proof}
\end{lemma}

It remains to construct such topologies for the restricted Hom-sets $\operatorname{Hom}(V,W)\vert_k \times \operatorname{Hom}(V',W')\vert_k$.
For this, we recall the Zariski topology~\cite[Chapter 1]{hartshorne77}.

\begin{definition}
  Let $V \cong \mathbb{k}^d$ be a finite-dimensional $\mathbb{k}$-vector space. We equip $\mathbb{k}$ with the cofinite topology, which in turn equips $\mathbb{k}^d$ with the coarsest topology which makes all polynomials in $\mathbb{k}[x_1,\dots,x_d]$ continuous.
  We call it the \emph{Zariski topology}. 
\end{definition}

We now extend the above definition to infinite-dimensional spaces (see for example \cite{Kumar2002}). 
\begin{definition}
Let $V$ be a (not necessarily finite-dimensional) $\mathbb{k}$-vector space.
As sets we can write
\[V \cong\underset{W\subseteq V\textrm{ finite-dimensional}}{\operatorname{colim}}~W,\]
which allows us to topologize $V$.
We call this the \emph{ind-Zariski topology}.
\end{definition}
Let $V$ and $W$ be vector spaces equipped with the ind-Zariski topology and let $\varphi:V\to W$ be linear.
If $U\subseteq V$ is finite-dimensional, then $\varphi\vert_U :U\to \varphi(U)$ is continuous.
By functoriality of the colimit this implies that $\varphi$ is continuous.

\begin{definition}
  We say that a map $\varphi:\mathbb{k}^d\to\mathbb{k}^{d'}$ is \emph{polynomial} if it is defined by a polynomial in every component $j\in \{1,\dots,d'\}$.
  For finite-dimensional vector spaces we say that a map is polynomial if it is polynomial for a suitable choice of basis.
  Finally, for arbitrary vector spaces we call a map \emph{polynomial} if it can be written as a colimit of polynomial maps on finite-dimensional subspaces.
  Polynomial maps are continuous in the ind-Zariski topology. 
\end{definition}

\begin{remark}\label{rmk:topologize-product}
  For our purposes, the main problem with the Zariski topology, and the reason that we topologize the product directly, is that bilinear maps are not continous for the product topology if the underlying field is infinite.
  To see this, consider the multiplication \[\mu:\mathbb{k}\times\mathbb{k}\to\mathbb{k}.\]
  Then on the left we have a product of cofinite topologies.
  But the preimage $\mu^{-1}(0) = \{xy = 0\}$ is not finite if $\mathbb{k}$ is infinite, and hence not closed.
  Unfortunately, this suggests that there is no easy way to fulfill the criteria in \cite{scoccola} except in the case of a finite base field. 
\end{remark}

The following lemma is the key to showing that the topologies we have defined on the Hom-sets satisfy the requirement of compact images of the shift maps, since finite-dimensional subspaces are compact and closed in the ind-Zariski topology.

\begin{lemma}\label{lem:fin-dim-image}
  The image of \[\operatorname{Hom}(V,W)\vert_k \to \operatorname{Hom}(V,W[\varepsilon])\vert_k\] is finite-dimensional.

  \begin{proof}
    The image consists of precisely those $\psi:V\to W[\varepsilon]$ which have a factorization 
    \[\begin{tikzcd}
      V & W \\
      & {W[\varepsilon]}
      \arrow["\varphi", from=1-1, to=1-2]
      \arrow["\psi"', from=1-1, to=2-2]
      \arrow["{\eta_\varepsilon^W}", from=1-2, to=2-2]
    \end{tikzcd}\] over an $\varepsilon$-shift.
    Let $s\in \mathbf{C}_k$. For any $s\ll t \ll s+\varepsilon$ we have a commutative diagram 
\[\begin{tikzcd}
	{V_s} & {\operatorname{im} V_{st}} & {V_{s+\varepsilon}} \\
	{W_s} & {\operatorname{im} W_{st}} & {W_{s+\varepsilon}}
	\arrow[from=1-1, to=1-2]
	\arrow["{\varphi_s}"', from=1-1, to=2-1]
	\arrow[from=1-2, to=1-3]
	\arrow["{\alpha_t}"', from=1-2, to=2-2]
	\arrow["{\varphi_{s+\varepsilon}}", from=1-3, to=2-3]
	\arrow[from=2-1, to=2-2]
	\arrow[from=2-2, to=2-3]
\end{tikzcd}\] where $\alpha_t$ is the linear map $\varphi_t$, suitably restricted. Observe that $\alpha_t$ is a map between finite-dimensional vector spaces. From the commutativity, we get a factorization  \[\psi_s = W_{t,s+\varepsilon}\circ\alpha_t\circ V_{st}.\]  We now fix any uniform grid $\mathbf{Q}\subseteq\mathbf{C}_k$ with width $\leq\frac{\varepsilon}{2}$. Since the $k$-cube is compact, the grid is finite. For any $s$ there is some $t\in\mathbf{Q}$ with $s\ll t\ll s+\varepsilon$, allowing us to write $\psi_s$ in terms of~$\alpha_t$ and the structure maps of $V$ and $W$. Thus $\psi$ is completely determined by the family $(\alpha_t)_{t\in\mathbf{Q}}$, and each $\psi_s$ has finite rank, proving the claim.
  \end{proof}
\end{lemma}

We can now proceed with the proof of the main result of this section, which is that the ind-Zariski topology on the Hom-sets satisfies the conditions of \cref{lem:hom-space-topologies-bounded}.

\begin{lemma}
  Topologizing $\operatorname{Hom}(V,W)\vert_k\times \operatorname{Hom}(V',W')\vert_k$ with the ind-Zariski topology  yields topologies which satisfy the conditions of \cref{lem:hom-space-topologies-bounded}. 

  \begin{proof}

The ind-Zariski topology is $T_1$ as points are closed in finite-dimensional subspaces, which are closed in the ambient space.

Observe that the maps $\xi_\varepsilon$ are bilinear, which implies that when choosing coordinates locally it becomes a polynomial expression of degree $2$.
This suffices for continuity under the ind-Zariski topology.

The continuity of pairs of shifts follows from their linearity. 
Now \cref{lem:fin-dim-image} implies that a pair of shifts \[S_\varepsilon : \operatorname{Hom}(V,W)\vert_k\times\operatorname{Hom}(V',W')\vert_k\to \operatorname{Hom}(V,W[\varepsilon])\vert_k\times\operatorname{Hom}(V',W'[\varepsilon])\vert_k\] also has a finite-dimensional image as the product of two finite-dimensional spaces.
It follows that the image must be closed and compact in the ind-Zariski topology. 
  \end{proof}
\end{lemma}

We thus obtain the second part of our main theorem:

\begin{corollary}
  The space of q-tame lower semicontinuous modules is a complete extended metric space.
  In particular, $d_I$ reflects isomorphisms. 
\end{corollary}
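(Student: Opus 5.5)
The plan is to assemble the corollary from the pieces already in place, working throughout in $\mathbf{qtLsc}_n$. By the three-fold equivalence $\mathbf{qtLsc}_n\simeq\mathbf{qtOb}_n$ this is the same as working in $\mathbf{qtOb}_n$, and the remark on interleavings shows that $d_I$ is the same in both incarnations. Completeness has already been established: the proposition that $\ourspace{n}$ is complete constructs the metric limit of a Cauchy sequence as a colimit. That limit is again lower semicontinuous, because colimits commute with colimits, and it is q-tame by \cref{lemma:q-tame-shift}. The remaining task is to show that the extended pseudometric is an honest extended metric, i.e.\ that $d_I(V,W)=0$ forces $V\cong W$.

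For this I would prove the stronger statement that $d_I(V,W)=\delta$ implies $\mathbb{I}(V,W)(\delta)\neq\emptyset$, and then specialize to $\delta=0$, where a $0$-interleaving is an isomorphism. The route is to feed the topologies into \cref{prop:hom-space-topologies}.

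\begin{enumerate}
\item Fix $k$. Equip $\operatorname{Hom}(V,W)\vert_k\times\operatorname{Hom}(V',W')\vert_k$ with the ind-Zariski topology. By the preceding lemma these spaces are $T_1$, the maps $\xi_\varepsilon$ are continuous, and each shift pair $S_\varepsilon$ is continuous with finite-dimensional, hence compact and closed, image (\cref{lem:fin-dim-image}).
\item Check the extra hypothesis of \cref{lem:hom-space-topologies-bounded}. The restriction maps from level $k+1$ to level $k$ are linear, hence continuous. For closedness I would note that the restriction maps are only needed on the images of shifts. Those images are finite-dimensional, and a linear map between finite-dimensional spaces has Zariski-closed image: the image is a linear subspace, cut out by linear equations.
\item \Cref{lem:hom-space-topologies-bounded} then supplies limit topologies on the full Hom-products satisfying the conditions of \cref{prop:hom-space-topologies}.
\item Apply \cref{prop:hom-space-topologies}. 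It uses the upper semicontinuity of $\mathbb{I}(V,W)$ from \cref{prop:upper-semicontinuity-interleavings}, valid since $V$ is lower semicontinuous, together with \cref{cor:stone_q_tame} for the tower $\mathbb{I}(V,W)(\delta+\tfrac1m)$. This yields a $\delta$-interleaving.
\end{enumerate}

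The main obstacle is step 2, the closedness of the restriction maps. A linear map on an infinite-dimensional ind-Zariski space need not be closed on arbitrary closed sets, since closed sets can be unions of infinitely many subvarieties in an ind-sense. I would handle this by restricting the whole tower to the images of the shifts, as is done in the proof of \cref{cor:stone_q_tame}. On these finite-dimensional pieces every map is between finite-dimensional varieties. The relevant sets $\mathbb{I}$ are then Zariski-closed, being defined by quadratic equations, and their images under linear maps are constructible. Closedness would need to be replaced by the statement that the tower of images stabilizes, which follows from the Noetherian property of finite-dimensional affine spaces.

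If this detour fails, the fallback is to apply Stone's theorem directly to the tower of these finite-dimensional Zariski-closed sets. Such spaces are compact, and the descending chain condition on closed sets gives a nonempty limit by the standard Mittag-Leffler-type argument.
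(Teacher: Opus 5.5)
Your outline is the paper's own argument. Completeness comes from the colimit construction, and $d_I(V,W)=\delta\Rightarrow\mathbb{I}(V,W)(\delta)\neq\emptyset$ comes from upper semicontinuity of $\mathbb{I}(V,W)$, \cref{cor:stone_q_tame}, and ind-Zariski topologies on the Hom-sets. The completeness half is fine. The gap is in the step that is supposed to produce the interleaving, and your repair does not close it.

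Stone's theorem needs \emph{closed} bonding maps. Postcomposition with shifts is not Zariski-closed, even for bounded pointwise finite-dimensional modules with $n=1$, where restriction to cubes plays no role. Take $V=\mathbb{k}_{(0,1]}$ and $W=\mathbb{k}_{(0,3/2]}$ over an infinite field.
For $\varepsilon\in[\frac12,\frac34)$, both Hom-spaces are $\mathbb{k}$ and $\mathbb{I}(V,W)(\varepsilon)\cong\{(x,y)\in\mathbb{k}^2: xy=1\}$.
For $\varepsilon'\in[1,\frac32)$, one has $\operatorname{Hom}(W,V[\varepsilon'])=0$ and both $\eta_{2\varepsilon'}$ vanish, so $\mathbb{I}(V,W)(\varepsilon')\cong\mathbb{k}\times\{0\}$.
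The bonding map is $(x,y)\mapsto(x,0)$, whose image $\mathbb{k}^\times\times\{0\}$ is not closed. So the defect is not confined to the restriction maps of your step~2. Hypothesis~(3) of \cref{prop:hom-space-topologies}, which your step~1 takes from the ind-Zariski lemma, already fails. That lemma only shows that $\operatorname{im}S_\varepsilon$ is closed, and a map with closed image need not be a closed map; your step~2 makes the same conflation.

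Your fix via Noetherianity does not work either. The descending chain condition holds for \emph{closed} sets. The images $\operatorname{im}\bigl(\mathbb{I}(V,W)(\delta+\tfrac1m)\to\mathbb{I}(V,W)(\delta+\tfrac1k)\bigr)$ are at best constructible, and over fields like $\mathbb{R}$ not even that (the image of $\{y=x^2\}$ under $(x,y)\mapsto y$ is $[0,\infty)$). Only their closures are forced to stabilize, and a point of the stable closure need not lift.

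Your fallback fails for the same reason: Noetherian (hence compact) $T_1$ spaces with continuous bonding maps can have empty limit. Let $\mathbb{k}=\{a_1,a_2,\dots\}$ be countably infinite, and set $X_m=\{(x,y_1,\dots,y_m)\in\mathbb{k}^{m+1}: y_i(x-a_i)=1 \text{ for } i\le m\}$. These sets are nonempty, Zariski closed, cut out by quadratic equations, and linked by linear coordinate projections. Yet $\lim_m X_m=\emptyset$. Every property you invoke holds in this example, so those properties cannot suffice.

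What is missing is a genuine Mittag-Leffler statement: the images in the tower $\mathbb{I}(V,W)(\delta+\frac1m)$ stabilize, or some other reason forces the limit to be nonempty. Such an argument must use the specific structure of interleavings or a hypothesis on $\mathbb{k}$. For finite $\mathbb{k}$, for instance, the pieces $\operatorname{im}S^k_\varepsilon$ are finite, and a Tychonoff/K\"onig argument with discrete topologies goes through.
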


Together with the decomposition result from \cref{section:decomposition} we have shown our main theorem:

\begin{theorem}
  Let $n \geq 1$.
  The observable category of q-tame $n$-parameter persistence modules satisfies the following properties:
  \begin{enumerate}
  \item[(P1)] The category is Abelian and Krull--Schmidt, in the sense that every object decomposes as a direct sum of indecomposables in an essentially unique way.
  \item[(P2)] Two persistence modules are isomorphic if and only if they are at interleaving distance zero.
  \item[(P3)] The extended metric induced by the interleaving distance is complete, in the sense every Cauchy sequence of persistence modules has a limit.
  \end{enumerate}
\end{theorem}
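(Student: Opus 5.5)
The plan is to assemble the theorem from results already established, working throughout in the incarnation $\mathbf{qtLsc}_n$ (or $\mathbf{qtUsc}_n$) of the observable category. The corollary $\mathbf{qtLsc}_n\simeq\mathbf{qtOb}_n\simeq\mathbf{qtUsc}_n$ lets us move between these, and the remark on interleavings shows that $\pi$ and $\overline{\,\cdot\,}$ preserve interleavings and commute with shifts. Hence every metric statement can be checked in whichever incarnation is most convenient. Throughout, ``isomorphic'' means isomorphic in $\mathbf{qtOb}_n$.

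For (P1), Abelianness is the corollary stating that $\mathbf{qtOb}_n$ is the Serre localization of the Abelian category of q-tame modules at the ephemeral ones. For Krull--Schmidt, take $V\in\mathbf{qtOb}_n$ and replace it by $\overline V\in\mathbf{qtLsc}_n$. \cref{corollary:decomposition-qtLsc} decomposes $\overline V$ as a direct sum of indecomposables with local endomorphism rings. Since the equivalences are additive, this decomposition transfers to $\mathbf{qtOb}_n$, and it is a biproduct there by the corollary following \cref{proposition:decomposition-qtUsc}. Local endomorphism rings give essential uniqueness by the standard Krull--Remak--Schmidt--Azumaya argument.

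For (P2), one direction is trivial: an observable isomorphism yields $\varepsilon$-interleavings for every $\varepsilon>0$, obtained by composing with shift maps, so the distance is $0$. For the converse, suppose $d_I(V,W)=0$ and pass to $\overline V,\overline W\in\mathbf{qtLsc}_n$, which are still at distance $0$. By \cref{prop:upper-semicontinuity-interleavings} the persistent set $\mathbb{I}(\overline V,\overline W)$ is upper semicontinuous. The ind-Zariski topologies on the restricted Hom-sets satisfy the hypotheses of \cref{lem:hom-space-topologies-bounded}, so the Hom-sets carry topologies satisfying \cref{prop:hom-space-topologies}. That proposition then gives $\mathbb{I}(\overline V,\overline W)(0)\neq\emptyset$. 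A $0$-interleaving is an isomorphism $\overline V\cong\overline W$, hence $V\cong W$ in $\mathbf{qtOb}_n$. The same argument shows more generally that distances are realized by interleavings, so $d_I$ is a genuine extended metric on $\ourspace{n}$.

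For (P3), use the completeness proposition. Given a Cauchy sequence in $\ourspace{n}$, represent it by modules $V^{(k)}\in\mathbf{qtLsc}_n$. The colimit construction from the earlier lemma yields a limit $V$. This $V$ is lower semicontinuous, because colimits commute, and it is q-tame by \cref{lemma:q-tame-shift}, so its class lies in $\ourspace{n}$.

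The only real subtlety is in (P2): checking that the hypotheses of \cref{prop:hom-space-topologies} are met for q-tame lower semicontinuous modules. The key point is the compact-image condition supplied by \cref{lem:fin-dim-image}, which rests on q-tameness. The remaining steps are bookkeeping through the equivalences.
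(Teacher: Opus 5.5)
Your proposal is correct and follows the paper's own assembly exactly: (P1) comes from the Serre-localization corollary plus transferring the decomposition of \cref{corollary:decomposition-qtLsc} through the equivalences; (P2) from upper semicontinuity of $\mathbb{I}$ together with Stone's theorem on the ind-Zariski-topologized Hom-sets; and (P3) from the colimit construction plus \cref{lemma:q-tame-shift}. One caveat you inherit from the paper rather than introduce: Stone's theorem needs the bonding maps to be \emph{closed maps}, but the ind-Zariski lemma, and \cref{lem:fin-dim-image} which you single out as the key point, only show that $S_\varepsilon$ has closed compact \emph{image}, and linear maps are not Zariski-closed in general (the projection $(x,y)\mapsto x$ sends $\{xy=1\}$ to $\mathbb{k}\setminus\{0\}$), so for infinite $\mathbb{k}$ this hypothesis of Stone's theorem is not actually established.
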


\subsection{Results on Separability and Compactness}

We just saw that the observable ismorphism classes of q-tame persistence modules form a complete (extended) metric space. We now study metric and topological properties of this space and of important subspaces. We first exhibit a density result, allowing us to approximate any q-tame module via restriction-extensions to discrete regular grids.

\begin{definition}
  Let $V\in\mathbf{qtUsc}_n$.
  We say that $V$ is \emph{locally finitely presentable} if it is pointwise finite-dimensional and if there is some countable grid $\mathbf{Q}$ so that $V \cong V_\mathbf{Q}$.
  Additionally, if $\mathbf{Q}$ can be chosen to be regular, then we say that $V$ is \emph{regularly locally finitely presentable}.
\end{definition}

\begin{lemma}\label{lem:regularly-locally-fp-dense}
  The regularly locally finitely presentable modules are dense in $\mathbf{qtUsc}_n$.

  \begin{proof}
    Let $V$ be q-tame and  upper semicontinuous.
    We let \[S_\varepsilon V = \operatorname{im}(\eta_\varepsilon:V\to V[\varepsilon])\] for all $\varepsilon>0$. Observe that this module is $\varepsilon$-interleaved with $V$; 
    the interleaving morphisms are given by the submodule inclusion $S_\varepsilon V \hookrightarrow V[\varepsilon]$ and the composition of the quotient map $V\to S_\varepsilon V$ with the shift map $S_\varepsilon V\to S_\varepsilon V[\varepsilon]$.
    Furthermore, $S_\varepsilon V$ is pointwise finite-dimensional since $V$ is q-tame. It thus follows that the pointwise finite-dimensional upper semicontinuous modules are dense in $\mathbf{qtUsc}_n$.

    We may thus assume that $V$ is pointwise finite-dimensional.
    Let $\varepsilon>0$ and consider the regular grid $\mathbf{Q} = (\varepsilon\mathbb{Z})^n$.
    Observe that the module $V_\mathbf{Q}$ is regularly finitely presentable since restriction-extension is idempotent. 
    Composing the natural map $V_\mathbf{Q}\to V$ with the shift map $V\to V[\varepsilon]$, we get a map \[g:V_\mathbf{Q}\to V[\varepsilon].\]
    Dually, we get a map \[f:V\to V_\mathbf{Q}[\varepsilon]\] as follows: 
    For any $s$ there is a unique grid point $p$ such that $s \leq p < s+\varepsilon$.
    By definition of $V_\mathbf{Q}$, this yields a map $V_s\to V_p = (V_\mathbf{Q})_{s+\varepsilon}$ assembled from the structure maps of $V$. These maps define an interleaving of $V$ and $V_\mathbf{Q}$ since they both are defined purely in terms of the structure maps of $V$.
    We can thus conclude that \(d_I(V,V_\mathbf{Q})\leq\varepsilon\), and so $V$ can be approximated by a regularly finitely presentable module in the interleaving distance.
  \end{proof}
\end{lemma}

All persistence modules arising from applications in topological data analysis are finitely presentable. However, the space of finitely presentable modules is not complete, so it is natural to ask for a characterization of its closure in order to study stability properties of methods using these modules.

\begin{proposition}
  The closure of $\mathbf{fp}_n$ under the interleaving distance consists of precisely those modules $V\in\mathbf{qtUsc}_n$ for which $d_I(V,V_{\mathbf{C}_k})\to 0$ as $k\to\infty$.

  \begin{proof}
    Let $V \colon \mathbf{R}^n\to \mathbf{Vect}$ be in the closure of the finitely presentable modules.
    Since $\mathbf{fp}_n$ is in $\mathbf{qtUsc}_n$ which is complete, it follows that $V\in\mathbf{qtUsc}_n$.
    We write $V_k = V_{\mathbf{C}_k}$.
    There is a sequence $W_n$ of finitely presentable modules with $W_k\to V$ as $n\to\infty$.
    We assume that $W_k$ is defined on some finite grid $\mathbf{P}_k$ which we may assume to be contained in the box $[-k,k]^n$.
    By the triangle inequality we get \[d_I(V_k,V)\leq d_I(W_k,V)+d_I(V_k,W_k)\] so it remains to show that $d_I(V_k,W_k)\to 0 $ as $k\to\infty$.
    Let $f:V\to W_k[\varepsilon]$ and $g:W_k\to V[\varepsilon]$ be an $\varepsilon$-interleaving.
    Since $W_k$ is defined on a grid inside $[-k,k]^n$ we can obtain factorizations \[f:V\to V_k\to W_k[\varepsilon]\] and \[g:W_k\to V_k[\varepsilon]\to V[\varepsilon]\] which combine to an $\varepsilon$-interleaving of $W_k$ and $V_k$.
    This shows the claim.
    Conversely, consider a $V\in\mathbf{qtUsc}_n$ with the property \[d_I(V,V_k)\to 0\] as $k\to\infty$.
    This immediately implies that $V_k\to V$, so it remains to show that $V_k\in\overline{\mathbf{fp}_n}$.
    As above, the $V_k$ can be approximated by modules of the form \[ \operatorname{im}(\eta_\varepsilon :V_k\to V_k[\varepsilon])\] which are defined on the box $[-k,k]^n$ and pointwise finite-dimensional.
    Additionally, these can moreover be approximated by a module defined on a regular grid.
    But the intersection of a regular grid with a compact set is finite, hence they can be approximated by pointwise finite-dimensional modules defined on a finite grid.
    By the lemma in the first chapter, these are finitely presentable.
    It follows that $V_k\in\overline{\mathbf{fp}_n}$. 
  \end{proof}
\end{proposition}

\begin{corollary}
  \label{corollary:characterization-closure-finitely-presented}
  The closure $\overline{\mathbf{fp}_n}$ of the finitely presentable modules in the interleaving distance is precisely the space of q-tame, upper semicontinuous modules $V$ such that there is some bounded subset $\mathbf{B}\subseteq\mathbf{R}^n$ with $V_\mathbf{B} \cong V$.
\end{corollary}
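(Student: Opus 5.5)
The plan is to deduce the statement from the preceding proposition, which characterizes $\overline{\mathbf{fp}_n}$ as the set of $V\in\mathbf{qtUsc}_n$ with $d_I(V,V_{\mathbf{C}_k})\to 0$. I read $V_{\mathbf{B}}$ as the restriction of $V$ to $\mathbf{B}$ followed by the extension, exactly as for $V_{\mathbf{C}_k}$. I will prove the two inclusions separately. Only the second inclusion is non-formal, and I expect it to be the main obstacle.

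\emph{Modules supported on a bounded set lie in the closure.} Let $V\in\mathbf{qtUsc}_n$ with $V\cong V_{\mathbf{B}}$ for some bounded $\mathbf{B}$, and choose $k_0$ with $\mathbf{B}\subseteq\mathbf{C}_{k_0}$. For $k\geq k_0$ the extension from $\mathbf{C}_k$ only uses the values of $V$ on $\mathbf{C}_k$. These values are in turn determined by the values on $\mathbf{B}\subseteq\mathbf{C}_k$, because $V\cong V_{\mathbf{B}}$. Hence restriction–extension along $\mathbf{C}_k$ fixes $V_{\mathbf{B}}$, which gives $V_{\mathbf{C}_k}\cong (V_{\mathbf{B}})_{\mathbf{C}_k}\cong V_{\mathbf{B}}\cong V$. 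Consequently $d_I(V,V_{\mathbf{C}_k})=0$ for all $k\geq k_0$, and the preceding proposition places $V$ in $\overline{\mathbf{fp}_n}$. The only thing to check here is the idempotence statement $(V_{\mathbf{B}})_{\mathbf{C}_k}\cong V_{\mathbf{B}}$ for $\mathbf{B}\subseteq\mathbf{C}_k$. This follows from the definition of extension as a supremum over points below $s$, in the same way as the idempotence of restriction–extension already used in \cref{lem:regularly-locally-fp-dense}.

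\emph{Modules in the closure are supported on a bounded set.} Let $V\in\overline{\mathbf{fp}_n}$. By the proposition, $d_I(V,V_{\mathbf{C}_k})\to 0$, and the task is to upgrade this to $V\cong V_{\mathbf{C}_k}$ for a single $k$, after which $\mathbf{B}=\mathbf{C}_k$ works. First I would use (P2): by the existence of metric-realizing interleavings it suffices to show $d_I(V,V_{\mathbf{C}_k})=0$ for some $k$. Second I would exploit that, for $k\leq k'$, restriction–extension along $\mathbf{C}_k$ factors through restriction–extension along $\mathbf{C}_{k'}$, which makes $k\mapsto d_I(V,V_{\mathbf{C}_k})$ nonincreasing. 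Third I would try to show that this sequence of distances cannot decrease strictly forever. My approach would be to compare $V_{\mathbf{C}_k}$ and $V_{\mathbf{C}_{k+1}}$ on the shell $\mathbf{C}_{k+1}\setminus\mathbf{C}_k$, using the smoothings $S_\varepsilon V$ together with q-tameness to see that only finitely much information is created there at each scale.

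This last step is where I expect the real difficulty. A one-parameter module with infinitely many bars, whose lengths tend to $0$ and whose births tend to infinity, satisfies $d_I(V,V_{\mathbf{C}_k})\to 0$ while not being isomorphic to any $V_{\mathbf{C}_k}$. So the argument must either use a reading of ``$V_{\mathbf{B}}\cong V$'' that holds up to observable isomorphism or up to $\varepsilon$ for every $\varepsilon$, or else be restricted further. If the literal statement fails, I would adjust it to the form ``for every $\varepsilon>0$ there is a bounded $\mathbf{B}$ with $d_I(V,V_{\mathbf{B}})<\varepsilon$''. That form is equivalent to the proposition, via the monotonicity established in the second step.
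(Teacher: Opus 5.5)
Your inclusion ``boundedly generated $\subseteq\overline{\mathbf{fp}_n}$'' is correct. For $\mathbf{B}\subseteq\mathbf{C}_k$, a point of $\mathbf{B}$ lies below $s$ iff it lies below $\min(s,k)$, and no point of $\mathbf{B}$ lies below $s$ when $s\not\geq -k$. Hence $(V_{\mathbf{B}})_{\mathbf{C}_k}\cong V_{\mathbf{B}}$, so $V\cong V_{\mathbf{C}_k}$ for large $k$, and the ``if'' half of the preceding proposition applies.

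The reverse inclusion is not proved, and your suspicion is right: it is false, so no argument can close this gap. The paper gives no proof of this corollary beyond placing it after the proposition. The passage from $d_I(V,V_{\mathbf{C}_k})\to 0$ to $V\cong V_{\mathbf{B}}$ for a single bounded $\mathbf{B}$ is exactly the step that fails. Here is a counterexample that avoids the proposition. For $n=1$, take $V=\bigoplus_{j\geq 1}[j,j+\tfrac1j)$.
\begin{itemize}
\item $V$ is pointwise finite-dimensional and upper semicontinuous, since its bars are disjoint, locally finite, and of the form $[a,b)$.
\item The finitely presentable truncations $\bigoplus_{j\leq N}[j,j+\tfrac1j)$ lie within $\tfrac{1}{2(N+1)}$ of $V$, so $V\in\overline{\mathbf{fp}_1}$.
\item Any $V_{\mathbf{B}}$ with $\mathbf{B}$ bounded is constant on some ray $[m,\infty)$, and so is its upper envelope. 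But $V=\underline{V}$ is not constant on any ray.
\end{itemize}
Hence $V$ is not isomorphic, even observably, to any $V_{\mathbf{B}}$.

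Two parts of your sketch need correcting.

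\emph{Monotonicity.} The claim in your second step is false. Restriction--extension along $\mathbf{C}_k$ neither commutes with shifts nor is $1$-Lipschitz. For $V=[0,1)\oplus[1.9,2.1)$ one gets $V_{\mathbf{C}_1}=[0,1)$, at distance $0.1$ from $V$, but $V_{\mathbf{C}_2}=[0,1)\oplus[1.9,\infty)$, at distance $\infty$. So your closing claim that the weakened statement is ``equivalent to the proposition'' has no support; it would need its own proof.

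\emph{Your counterexample.} Bars with lengths tending to $0$ and births tending to $\infty$ do not automatically give $d_I(V,V_{\mathbf{C}_k})\to 0$. In the example above, $V_{\mathbf{C}_k}$ contains the infinite bar $[k,\infty)$, so $d_I(V,V_{\mathbf{C}_k})=\infty$ for every integer $k$. For your purpose, move the bars off the integers, e.g.\ $[j+\tfrac12,\,j+\tfrac12+\tfrac1j)$ with $j\geq 3$. Since the unshifted $V$ lies in $\overline{\mathbf{fp}_1}$, it also refutes the ``only if'' half of the preceding proposition. That proof uses a factorization $V\to V_{\mathbf{C}_k}\to W_k[\varepsilon]$ which does not exist in general. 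Your first inclusion is unaffected, because it uses only the other half.
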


\begin{definition}
  We call such modules \emph{boundedly generated} and write $\mathbf{bg}_n$ for the full subcategory of boundedly generated modules. 
\end{definition}

\begin{corollary}
  As a category, $\mathbf{bg}_n$ is Abelian.
  \begin{proof}
    As a full subcategory of $\mathbf{qtUsc}_n$, we only have to check closedness under finite sums, kernels, and cokernels. We do the case of sums as an example. Let $V_\mathbf{B} \cong V$ and $W_\mathbf{C}  \cong W$ with $\mathbf{B,C}$ bounded. Let $\mathbf{D} = \mathbf{B}\cup\mathbf{C}$. Then \[(V\oplus W)_{\mathbf{D}} \cong V_\mathbf{D}\oplus W_\mathbf{D} \cong V\oplus W\] which implies that $\mathbf{bg}_n$ is closed under sums.
  \end{proof}
\end{corollary}

\begin{proposition}\label{bgsep}
  The space $\mathbf{bg}_n$ is separable if and only if the base field $\mathbb{k}$ is countable or $n=1$.
  \begin{proof}
First let $n=1$. In this case the finitely presentable modules all decompose into finitely many interval modules. This already implies separability independently of the cardinality of the base field. 

Now let $\mathbb{k}$ be countable. It suffices to show that the space of finitely presentable modules has a countable, dense subset. Any such module is pointwise finite-dimensional and defined on a finite grid. If the base field is countable, then for every structure map there is only a countable amount of linear maps to choose from. Restricting to modules with countable grid lengths thus yields a countable, dense subset. For the other direction, assume that $\mathbb{k}$ is uncountable and $n\geq 2$. In order to show that the resulting space is not separable, we produce an uncountable family of modules, with distance bounded from below by some $\varepsilon>0$. For this we consider the following well-known example from \cite[Chapter 8]{botnan23}: 

\[\begin{tikzcd}
	{\mathbb{k}} & {\mathbb{k}\oplus \mathbb{k}} & {\mathbb{k}\oplus \mathbb{k}} & {\mathbb{k}\oplus \mathbb{k}} \\
	& {\mathbb{k}} & {\mathbb{k}\oplus \mathbb{k}} & {\mathbb{k}\oplus \mathbb{k}} \\
	&& {\mathbb{k}} & {\mathbb{k}\oplus \mathbb{k}} \\
	&&& {\mathbb{k}}
	\arrow["\begin{array}{c} \begin{pmatrix}0\\ 1\end{pmatrix} \end{array}", from=1-1, to=1-2]
	\arrow["{\operatorname{id}}", from=1-2, to=1-3]
	\arrow["{\operatorname{id}}", from=1-3, to=1-4]
	\arrow["\begin{array}{c} \begin{pmatrix}1\\0\end{pmatrix} \end{array}", from=2-2, to=1-2]
	\arrow["\begin{array}{c} \begin{pmatrix}1\\ 0\end{pmatrix} \end{array}"', from=2-2, to=2-3]
	\arrow["{\operatorname{id}}"', from=2-3, to=1-3]
	\arrow["{\operatorname{id}}", from=2-3, to=2-4]
	\arrow["{\operatorname{id}}"', from=2-4, to=1-4]
	\arrow["\begin{array}{c} \begin{pmatrix}1\\ 1\end{pmatrix} \end{array}"', from=3-3, to=2-3]
	\arrow["\begin{array}{c} \begin{pmatrix}1\\ 1\end{pmatrix} \end{array}"', from=3-3, to=3-4]
	\arrow["{\operatorname{id}}"', from=3-4, to=2-4]
	\arrow["\begin{array}{c} \begin{pmatrix}1\\\lambda\end{pmatrix} \end{array}"', from=4-4, to=3-4]
\end{tikzcd}\]
For $\lambda\neq\mu\in\mathbb{k}$, the resulting modules are not isomorphic. With a given grid size, their interleaving distances are bounded from below. 
  \end{proof}
\end{proposition}

\begin{remark}
  In particular, the space of boundedly generated modules is separable for finite fields $\mathbb{F}_q$, for number fields $K\mid\mathbb{Q}$, and for function fields $F\mid\mathbb{F}_q(t)$ i.e., for global fields.
\end{remark}

Recall that a subset of a topological space is \emph{precompact} if its closure is compact.
In the above proposition, we used the module $\operatorname{im}(\eta_\varepsilon:V\to V[\varepsilon])$.
For $V\in\mathbf{qtOb}_n$ this module is not an observable invariant (i.e., not well-defined on isomorphism classes in the observable category), but the module \[S_\varepsilon V = \operatorname{im}(\underline{V}\to\underline{V}[\varepsilon])\] is, since  the lower envelope $\underline{\,\cdot\,}$ is a functor out of the observable category. 
We could also use the upper envelope functor $\overline{\,\cdot\,}$, or a mix of the two.
The result will always be the same up to observable isomorphism.

\begin{proposition}\label{prop:characterization-precompactness}
  A subset $A\subseteq \mathbf{qtOb}_n$ is precompact if and only if for all $\delta>0$ the set
  \[A\vert_\delta = \{S_\delta V\vert_{\delta\mathbb{Z}^n}:V\in A\}\]
   has finitely many isomorphism classes. 

  \begin{proof}
    We use the following well-known result:

    \begin{lemma}
      Let $X$ be a complete metric space. A set $A\subseteq X$ is precompact if and only if it is totally bounded. 
    \end{lemma}

    We observe that from the proof of \cref{lem:regularly-locally-fp-dense} we obtain \[d_I(V,S_\varepsilon V\vert_{\varepsilon\mathbf{Z}^n})\leq 2\varepsilon\] for any $V$.
    We first assume that every $A\vert_\delta$ has finitely many isomorphism classes. Fix $\varepsilon >0$ and let $0<\delta <\frac{\varepsilon}{2}$.
    Let $V_1,\dots,V_m$ be the isomorphism classes in $A_\delta$ extended to $\mathbb{R}^n$.
    For all $V\in A$ there is some $V_k$ so that $d_I(V,V_k)\leq 2\delta < \varepsilon$.
    We thus obtain \[A\subseteq\bigcup_{k=1}^m B_\varepsilon(V_k).\]
    Since $\varepsilon>0$ was arbitrary, it follows that $A$ is totally bounded.
    
    Conversely, let $A$ be totally bounded and fix $\delta>0$.
    We can then find a cover \[A\subseteq \bigcup_{k=1}^m B_{\frac{\delta}{2}}(V_k)\] with $V_1,\dots,V_m\in A$.
    This means that for any  $V\in A$ we have some $V_k$ with $d_I(V,V_k)<\frac{\delta}{2}$.
    In particular, there is a $\frac{\delta}{2}$-interleaving which gives us a diagram 
    \[\begin{tikzcd}
      {V\vert_{\delta\mathbf{Z}^n}} && {V[\delta]\vert_{\delta\mathbf{Z}^n}} \\
      & {V_k[\frac{\delta}{2}]\vert_{\delta\mathbf{Z}^n}}
      \arrow[from=1-1, to=1-3]
      \arrow[from=1-1, to=2-2]
      \arrow[from=2-2, to=1-3]
    \end{tikzcd}\]
    implying that $S_\delta V\vert_{\delta \mathbf{Z}^n}$ depends only on $V_k$. 
    It follows that $A\vert_\delta$ has at most $m$ isomorphism classes, proving the claim.
  \end{proof}
\end{proposition}

\begin{definition}
  Let $V\in\mathbf{qtOb}_n$.
  We call the map \[c^V:[0,\infty)\to  \mathbb{N} \cup \{\infty\} \] defined by \[c^V(\varepsilon) = \sup_s \operatorname{rk}\left( \underline V_s \to \overline V[\varepsilon]_s\right)\] the \emph{persistent rank} of $V$. 
\end{definition}

\begin{corollary}
  \label{corollary:conditions-precompactness-finite-field}
  Assume that $\mathbb{k}$ is finite. Then the following conditions on $A\subseteq\mathbf{qtOb}_n$ imply that $A$ is precompact:

  \begin{enumerate}
    \item The modules $V\in A$ have uniformly bounded support.
    \item The modules $V\in A$ have uniformly bounded persistent rank.  
  \end{enumerate}
\end{corollary}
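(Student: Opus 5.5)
We will verify the criterion of \cref{prop:characterization-precompactness}: for each $\delta>0$ we show that the set $A\vert_\delta=\{S_\delta V\vert_{\delta\mathbb{Z}^n}:V\in A\}$ has only finitely many isomorphism classes. The idea is that each such module is a representation of a \emph{finite} poset with \emph{uniformly bounded} dimension vector, over a finite field. There are only finitely many such representations, since each one is described by finitely many matrices over $\mathbb{k}$ of bounded size.

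\emph{Step 1 (finite support).} Let $B=[-R,R]^n$ be a box containing the supports of all $V\in A$. We will read "support" observably, i.e.\ as the support of $\underline V$ (equivalently of $\overline V$, up to boundary effects). Then $S_\delta V=\operatorname{im}(\underline V\to\underline V[\delta])$ vanishes at every index $s$ with $s\notin B$. Restricting to $\delta\mathbb{Z}^n$, the module $S_\delta V\vert_{\delta\mathbb{Z}^n}$ is supported on the finite set $P_\delta=\delta\mathbb{Z}^n\cap B$, whose size depends only on $\delta$ and $R$. Hence $S_\delta V\vert_{\delta\mathbb{Z}^n}$ is determined up to isomorphism by its restriction to the finite poset $P_\delta$, which is independent of $V$.

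\emph{Step 2 (bounded dimensions).} Let $c<\infty$ be a uniform bound on the persistent ranks. At each $s$, the dimension of $(S_\delta V)_s=\operatorname{im}(\underline V_s\to\underline V_{s+\delta})$ should be bounded by $c^V(\delta')$ for a suitable $\delta'$. We compare $\underline V\to\underline V[\delta]$ with $\underline V_s\to\overline V[\delta]_s$, using the canonical maps between the envelopes. The map $\underline V_s\to\underline V_{s+\delta}$ factors through $\overline V_{s+\delta}$, because the upper envelope at $s+\delta$ receives the lower envelope at $s+\delta$ and the canonical comparison $\overline V\to V\to\underline V$ is compatible with the structure maps. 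Therefore
\[\dim (S_\delta V)_s\le \operatorname{rk}(\underline V_s\to\overline V_{s+\delta})\le c^V(\delta)\le c.\]
This inequality is the step I expect to need the most care: one has to check precisely which envelope maps compose to the structure map. If this factorization fails on the nose, I would replace $\delta$ by a slightly smaller $\delta'<\delta$ and factor through $V_{s+\delta'}$ instead.

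\emph{Step 3 (counting).} By Steps 1 and 2, every element of $A\vert_\delta$ is isomorphic to a representation of the finite poset $P_\delta$ with each vector space of dimension at most $c$. After choosing bases, such a representation is given by a dimension vector in $\{0,\dots,c\}^{P_\delta}$ together with one matrix over $\mathbb{k}$ of size at most $c\times c$ for each of the finitely many cover relations of $P_\delta$. Since $\mathbb{k}$ is finite, there are only finitely many such data, so $A\vert_\delta$ has finitely many isomorphism classes. Finally, \cref{prop:characterization-precompactness} gives that $A$ is precompact.
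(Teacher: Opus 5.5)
Your proposal is correct and follows essentially the argument the paper intends; the paper states this corollary without proof as a consequence of \cref{prop:characterization-precompactness}, and your route (bounded support confines $S_\delta V\vert_{\delta\mathbb{Z}^n}$ to a fixed finite subgrid, the persistent rank bounds its dimensions, and finiteness of $\mathbb{k}$ leaves finitely many isomorphism types) is exactly that consequence. The factorization you flagged in Step 2 holds on the nose: composed with the projections of the limit $\underline V_{s+\delta}$, the composite $\underline V_s\to\overline V_{s+\delta}\to V_{s+\delta}\to\underline V_{s+\delta}$ gives the projections $\underline V_s\to V_{s+\delta+\eta}$, so it equals the structure map of $\underline V$ and no shrinking of $\delta$ is needed.
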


\section{Applications}

\subsection{Sublevel Set Persistence}

Let $X$ be a topological space and let $f:X\to\mathbb{R}^n$ be a continuous function.
Then \[X_s = \{x\in X:f(x) \ll s\}\] defines a filtration of open subspaces of $X$.
By the continuity of $f$ we have \[X_s \cong \underset{\varepsilon>0}{\operatorname{colim}}~X_{s-\varepsilon} \cong \bigcup_{\varepsilon>0}X_{s-\varepsilon}.\]
The assignment \[s\mapsto H_k(X_s,\mathbb{k})\] defines a $n$-parameter persistence module for fixed $k\geq 0$.
Singular homology preserves filtered colimits (see for example \cite{may99}, chapter 14), so \[H_k(X_s,\mathbb{k}) \cong H_k(\underset{\varepsilon>0}{\operatorname{colim}}~ X_{s-\varepsilon},\mathbb{k}) \cong \underset{\varepsilon>0}{\operatorname{colim}}~H_k(X_{s-\varepsilon},\mathbb{k})\] and hence the module $H_k(X_\bullet,\mathbb{k})$ is lower semicontinuous.
The next proposition is similar in spirit to \cite[Theorem~2]{cagliari-landi} and  \cite[Theorem~3.33]{strandstab}.

\begin{proposition}
  If $X$ is finitely triangulable, then the module $H_k(f) = H_k(X_\bullet,\mathbb{k})$ is in $\mathbf{bg}_n$. 

  \begin{proof}
    We first consider q-tameness. Let $s\ll t$.
    We want to show that $H_k(X_s,\mathbb{k})\to H_k(X_t,\mathbb{k})$ has finite image.
    We triangulate $X$ and refine the triangulation until no simplex intersects both $f^{-1}(s)$ and $f^{-1}(t)$.
    Let $K$ be the union of simplices intersecting $X_s$.
    Then $X_s\subseteq K\subseteq X_t$ which provides a factorization \[H_k(X_s,\mathbb{k})\to H_k(K,\mathbb{k})\to H_k(X_t,\mathbb{k})\] which shows the claim since $K$ is the geometric realization of a finite simplicial complex.
    We already know that $V = H_k(f)$ is lower semicontinuous from the above discussion. Thus it remains to show that it is the restriction-extension over a bounded set in $\mathbf{R}^n$.  For this we observe that $\mathbf{B} = f(X)$ is compact in $\mathbf{R}^n$ and hence bounded. We claim that $V = V_\mathbf{B}$. For this we observe that $X_s = f^{-1}(\{p : p\ll s\}\cap \mathbf{B}) = X_t$ where $t=\sup\{p : p\in \mathbf{B},p\ll s\}$, which is in $\mathbf{B}$ by closedness. Thus, as a persistence set, the filtration $X_\bullet$ is the restriction-extension of itself over $\mathbf{B}$, hence the same also holds for $V$. 
  \end{proof}
\end{proposition}

The results from \cref{section:decomposition} then immediately give us the following corollary:

\begin{corollary}
  Let $X$ be a finitely triangulable space and let $X_\bullet$ be any continuous $n$-parameter filtration into open subspaces.
  Then the module $H^k(X_\bullet,\mathbb{k})$ decomposes essentially uniquely into indecomposable modules.  
\end{corollary}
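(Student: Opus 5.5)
The plan is to derive the corollary directly from the preceding proposition together with the decomposition results of \cref{section:decomposition}. Two things need checking. First, that the statement is about the module $s\mapsto H_k(X_s,\mathbb{k})$; the superscript $H^k$ in the statement I read as a typo for homology, and I comment on the cohomological reading at the end. Second, that this module lands in a category where essentially unique decomposition is already established.

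Concretely, I would proceed in three steps.

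\textbf{Step 1.} Since the filtration $X_\bullet$ is continuous and consists of open subspaces, $X_s=\bigcup_{\varepsilon>0}X_{s-\varepsilon}$. Singular homology commutes with filtered colimits, so $V=H_k(X_\bullet,\mathbb{k})$ is lower semicontinuous, exactly as in the discussion before the preceding proposition.

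\textbf{Step 2.} Show that $V$ is q-tame by reusing the triangulation argument of the preceding proposition. For $s\ll t$, refine a finite triangulation of $X$ so that the union $K$ of closed simplices meeting $X_s$ lies inside $X_t$. Then $H_k(X_s)\to H_k(X_t)$ factors through the finite-dimensional space $H_k(K)$. This is where the only real work lies, since for a general filtration there is no function $f$ whose level sets we can separate. I would argue as follows. By lower semicontinuity, any element of $H_k(X_s)$ comes from $H_k(X_{s'})$ for some $s'\ll s$, and the closure relation $\overline{X_{s'}}\subseteq X_t$ is what we need. If that closure containment is not directly available, I would instead use compactness of $X$ and a Lebesgue number argument on the open cover $\{X_t\}\cup\{X\setminus\overline{X_{s'}}\}$ to choose a fine enough subdivision.

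\textbf{Step 3.} With $V\in\mathbf{qtLsc}_n$, \cref{corollary:decomposition-qtLsc} gives an essentially unique decomposition $V\cong\bigoplus_i V_i$ into indecomposables with local endomorphism rings. Via the equivalence $\mathbf{qtLsc}_n\simeq\mathbf{qtOb}_n$, this decomposition is also essentially unique observably.

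If cohomology is genuinely intended, the module $s\mapsto H^k(X_s,\mathbb{k})$ is contravariant, hence a module over $(\mathbf{R}^n)^{\mathrm{op}}$. After the reindexing $s\mapsto -s$ it becomes an $n$-parameter module. Over a field, $H^k=\operatorname{Hom}(H_k,\mathbb{k})$ turns colimits into limits, so the module is upper semicontinuous. Q-tameness transfers because dualizing preserves finite rank. \cref{proposition:decomposition-qtUsc} then yields an essentially unique product decomposition, which is a biproduct decomposition in the observable category. The main obstacle in either reading is Step 2, the q-tameness, since the remaining steps are formal.
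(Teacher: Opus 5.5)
\textbf{The gap is in Step 2.} You read the hypothesis as an arbitrary open filtration with $X_s=\bigcup_{\varepsilon>0}X_{s-\varepsilon}$. In that generality q-tameness is simply false. Take $X=[0,1]$, $U=\bigcup_{m\ge 1}(\tfrac{1}{m+1},\tfrac1m)$, and the constant filtration $X_s=U$. Then $H_0(X_s)\cong\bigoplus_{m\ge1}\mathbb{k}$ with identity structure maps, so no map $V_{s,t}$ has finite rank.

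Your fallback does not get around this. The family $\{X_t,\,X\setminus\overline{X_{s'}}\}$ covers $X$ exactly when $\overline{X_{s'}}\subseteq X_t$, so the Lebesgue-number argument presupposes the containment it was meant to replace. In the example, $\overline{U}=[0,1]\not\subseteq U$.

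Worse, the conclusion of the corollary itself fails under your reading, already for $n=1$. Put $I_m=(\tfrac1{m+1},\tfrac1m)$, set $X_t=\bigcup_{m\ge j}I_m$ for $t\in(-j-1,-j]$ with $j\ge1$, and set $X_t=[0,1]$ for $t>-1$. This is a lower semicontinuous filtration of $[0,1]$ by open sets. Its $H_0$ is the module of \cref{example:no-decomposition}, made constant on each $(-j-1,-j]$ and on $(-1,\infty)$. The argument given there goes through and shows that this module has no decomposition into indecomposables. So no version of Step 2 can succeed for the reading you chose.

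\textbf{What is intended.} The paper uses the sublevel set filtration $X_s=\{x: f(x)\ll s\}$ of a continuous $f\colon X\to\mathbb{R}^n$; the corollary sits directly after the proposition on $H_k(f)$, and $H^k$ is a typo for $H_k$. Continuity of $f$ supplies exactly the separation you were missing: $\overline{X_s}\subseteq f^{-1}(\{p: p\le s\})\subseteq X_t$ whenever $s\ll t$. Compactness of $X$ then gives a finite subcomplex $K$ with $X_s\subseteq K\subseteq X_t$.

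That is the content of the preceding proposition, which already shows that $H_k(f)$ is q-tame and lower semicontinuous. With it, your Steps 1 and 2 reduce to citing that proposition. Your Step 3, applying \cref{corollary:decomposition-qtLsc}, is then the paper's entire proof, and the cohomological detour is unnecessary.
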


Let $X$ be a finitely triangulable space.
Then for any two continuous functions $f,g:X\to\mathbb{R}^n$ we have \[d_I(H_k(f),H_k(g))\leq \|f-g\|_\infty,\] which implies that the mapping \[C(X,\mathbb{R}^n)\to \mathbf{qtOb}_n\] is continuous. 

\begin{proposition}
  Let $X$ be a finitely triangulable space and let $A\subseteq C(X,[0,1]^n)$ be a uniformly Lipschitz and closed family of continuous functions.
  Then the image of $A$ in $\mathbf{qtOb}_n$ is compact.

  \begin{proof}
    We use the theorem of Arzelà-Ascoli:

    \begin{theorem}
      Let $X$ be a compact Hausdorff space. A subset $A\subseteq C(X,\mathbb{R}^n)$ is relatively compact if and only if it is equicontinuous and pointwise bounded.
    \end{theorem}

    A proof of this version can be found in \cite{kelley1976linear}.
    By considering each component separately, this generalizes to functions with values in $\mathbb{R}^n$. The result then follows from the fact that the image of compact sets is compact. 
  \end{proof}
\end{proposition}

\begin{remark}
  From \cref{prop:characterization-precompactness} it now follows that for a uniformly Lipschitz and closed family $A\subseteq C(X,\mathbb{R}^n)$ the set $A\vert_\delta$ has finitely many isomorphism classes for all $\delta>0$.
\end{remark}

\begin{proposition}
  Let $X$ be a finitely triangulable space. Then the image of $C(X,\mathbb{R}^n)$ in $\mathbf{bg}_n$ is separable. 

  \begin{proof}
    Let $\mathbb{l}$ be the prime field of $\mathbb{k}$, in particular $\mathbb{l}$ is countable.
    Let $f:X\to\mathbb{R}^n$ be continuous.
    Then by the universal coefficient theorem for homology (see for example \cite{hatcher2002algebraic}) we have \[H_k(X_s,\mathbb{k} ) \cong H_k(X_s,\mathbb{l})\otimes_{\mathbb{l}} \mathbb{k}\] and so $H_k(f,\mathbb{k}) \cong H_k(f,\mathbb{l})\otimes_\mathbb{l}\mathbb{k}$.
    We therefore have a factorization \[H_k(\cdot,\mathbb{k}):C(X,\mathbb{R}^n)\to \mathbf{bg}_n^\mathbb{l}\to\mathbf{bg}_n^\mathbb{k}\] where the last map is given by $V\mapsto V\otimes_\mathbb{l}\mathbb{k}$.
    We have \[d_I(V\otimes_\mathbb{l} \mathbb{k},W\otimes_\mathbb{l}\mathbb{k})\leq d_I(V,W)\] for modules with coefficients in  $\mathbb{l}$ since tensoring preserves interleaving diagrams which imples the continuity of the tensor operation.
    By \cref{bgsep}, we know that $\mathbf{bg}_n^\mathbb{l}$ is separable.
    It follows by continuity that the image of $C(X,\mathbb{R}^n)$ in $\mathbf{bg}_n^\mathbb{k}$ is also separable.
  \end{proof}
\end{proposition}

\subsection{Generic Indecomposability}

One of the main motivations for this work were the results on density of indecomposable multi-parameter persistence modules in \cite{bauer23}.
With the space of q-tame and upper semicontinuous modules, we now indeed have a Baire space for multiparameter persistence. However, the theorem on the density of indecomposables does not immediately generalize to this class of modules:

\begin{proposition}
  The indecomposables are not dense in $\mathbf{qtUsc}_n$.

\begin{proof}
Write $L$ for the module that is $\mathbb{k}$ everywhere and has only identities as structure maps.
It is straightforward to verify that the module $L$ is injective.
We show that $L\oplus L$ has no indecomposable in any $\varepsilon$-neighbourhood in the interleaving distance. 
We now consider an $\varepsilon$-interleaving $f:L\oplus L\to V[\varepsilon],g:V\to (L\oplus L)[\varepsilon]$.
We then have in particular the commutative diagram 
\[\begin{tikzcd}
	{L\oplus L} && {(L\oplus L)[2\varepsilon]} \\
	& {V[\varepsilon]}
	\arrow["\cong", from=1-1, to=1-3]
	\arrow["f"', from=1-1, to=2-2]
	\arrow["{g[\varepsilon]}"', from=2-2, to=1-3]
\end{tikzcd}\] which implies that $f:L\oplus L\to V[\varepsilon]$ is injective.
By composing with one of the inclusions $L\to L\oplus L$ we get a short exact sequence  
\[\begin{tikzcd}
	0 & L & V & {V/L} & 0
	\arrow[from=1-1, to=1-2]
	\arrow[from=1-2, to=1-3]
	\arrow[from=1-3, to=1-4]
	\arrow[from=1-4, to=1-5]
\end{tikzcd}\] which splits since $L$ is injective, in particular $V = L\oplus V/L$.
We also have $V/L \neq 0$ since otherwise we could factorize the identity $\mathbb{k}\oplus \mathbb{k}\to \mathbb{k}\oplus\mathbb{k}$ over a $1$-dimensional space.
Hence any $V$ which is $\varepsilon$-interleaved with $L\oplus L$ for any $\varepsilon>0$ cannot be indecomposable. 
\end{proof}

\end{proposition}

In light of this result we restrict ourselves to the smaller space of boundedly generated modules. 
Recall that $\overline{\mathbf{fp}_n}  =\mathbf{bg}_n$, so it is immediate that the indecomposables are dense by \cite[Theorem A]{bauer23}.
Recall that a \emph{residual set} is a countable intersection of open and dense sets, a \emph{Baire space} is a topological space in which any residual set is dense (which is the case for every complete metric space), and that a \emph{generic property} in a Baire space is a property that holds on a residual set.
Now the missing ingredient for genericity is the construction of a family open and dense sets whose intersection is the set of indecomposables.
To this end we show the next proposition:

\begin{proposition}\label{prop:open-indecomp}
  Let $V$ be a regularly locally finitely presentable and indecomposable persistence module.
  Then there is a constant $\mu>0$ so that every $W\in\mathbf{qtUsc}_n$ with $d_I(V,W)<\mu$ is $c\mu$-indecomposable, where $c>0$ is fixed. 
\end{proposition}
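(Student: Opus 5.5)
The plan is to transport the idempotents of a decomposition of $W$ back to $V$ along an interleaving, and to use that $\operatorname{End}(V)$ is local. I take ``$c\mu$-indecomposable'' in the sense of \cite{bauer23}: whenever $W\cong W_1\oplus W_2$, one of the summands $W_i$ is $c\mu$-interleaved with $0$, i.e.\ $\eta_{2c\mu}^{W_i}=0$. I expect to obtain $c=2$.

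\textbf{Setup.} Since $V$ is regularly locally finitely presentable, $V\cong V_{\mathbf{Q}}$ for $\mathbf{Q}=(\delta\mathbb{Z})^n$ and some $\delta>0$, and $V$ is pointwise finite-dimensional. Set $\mu=\delta/8$. The key observation is that for $0\le t<\delta$ the shift $\eta^V_t$ is ``invertible on the grid''. For $p\in\mathbf{Q}$, no grid point lies in $(p,p+t]$ coordinatewise beyond $p$ itself. Hence, by the second lemma on grid extensions, $V_{p,p+t}$ is an isomorphism.

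\textbf{From shifted maps to endomorphisms.} Consequently, any morphism $a\colon V\to V[t]$ with $t<\delta$ determines an endomorphism $\hat a$ of $V|_{\mathbf{Q}}$ by $\hat a_p=V_{p,p+t}^{-1}\circ a_p$. Since $V=V_\mathbf{Q}$, this is an endomorphism of $V$. The assignment $a\mapsto\hat a$ has the following properties.
\begin{itemize}
\item It sends $\eta_t$ to $\operatorname{id}$.
\item It is additive.
\item It is compatible with composition: $\widehat{a'[t]\circ a}=\hat a'\circ\hat a$ whenever $t+t'<\delta$. This holds because $a$ commutes with structure maps, so the isomorphisms $V_{p,p+t}$ can be moved past $a$.
\item It is injective, and $\hat a=0$ iff $a=0$. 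This is because $a$ is determined by its values on the grid, again since $V=V_\mathbf{Q}$.
\end{itemize}
Verifying these compatibilities carefully, in particular that composition of morphisms into different shifts matches composition in $\operatorname{End}(V)$, is the step I expect to be the main technical obstacle. The rest is formal.

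\textbf{Transporting the idempotents.} Now let $W\in\mathbf{qtUsc}_n$ with $d_I(V,W)<\mu$, and choose a $\mu$-interleaving $f\colon V\to W[\mu]$, $g\colon W\to V[\mu]$. Let $W=W_1\oplus W_2$ with idempotents $e_1+e_2=\operatorname{id}_W$. Define
\[
a_i=g[\mu]\circ e_i[\mu]\circ f\colon V\to V[2\mu].
\]
Then $a_1+a_2=\eta_{2\mu}$, so $\hat a_1+\hat a_2=\operatorname{id}_V$. Moreover,
\[
a_i[2\mu]\circ a_i=g[3\mu]\,e_i[3\mu]\,\eta^{W}_{2\mu}[\mu]\,e_i[\mu]\,f=g[3\mu]\,\eta^W_{2\mu}[\mu]\,e_i[\mu]\,f=a_i[2\mu]^{\sim},
\]
where $a_i[2\mu]^{\sim}$ denotes $a_i$ postcomposed with $\eta_{2\mu}$. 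Here I use that $e_i$ commutes with shifts and that $f[\mu]g=\eta_{2\mu}^W$. Since $4\mu<\delta$, the compatibility above gives $\hat a_i^2=\hat a_i$. Because $V$ is pointwise finite-dimensional and indecomposable, $\operatorname{End}(V)$ is local by \cite{botnan19}, so its only idempotents are $0$ and $1$. Hence one of the $\hat a_i$, say $\hat a_2$, is $0$, and therefore $a_2=0$.

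\textbf{Conclusion.} Finally, compute on $W_2$:
\[
\eta^{W}_{4\mu}\,e_2=f[3\mu]\,g[2\mu]\,\eta^{W}_{2\mu}[\mu]\ldots .
\]
More precisely,
\[
e_2[4\mu]\,\eta_{4\mu}^W\,e_2=e_2[4\mu]\,f[3\mu]\,g[2\mu]\,f[\mu]\,g\,e_2=e_2[4\mu]\,f[3\mu]\,a_2[\mu]\,g\,e_2=0,
\]
after inserting $e_2[2\mu]$ in the middle using $e_2^2=e_2$ and naturality. Thus $\eta^{W_2}_{4\mu}=0$, i.e.\ $W_2$ is $2\mu$-interleaved with $0$. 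Hence $W$ is $2\mu$-indecomposable, which is the claim with $c=2$.
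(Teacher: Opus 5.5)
Your proof is correct, and it takes a genuinely different route from the paper's.

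The paper restricts and extends the interleaving to the grid. There $V_\mathbf{P}\to W[\delta]_\mathbf{P}\to V[2\delta]_\mathbf{P}$ becomes an isomorphism, so $V$ splits off $W[\delta]_\mathbf{P}$. It then uses the decomposition of $W$ into indecomposables from Section~3 and an exchange (Krull--Schmidt--Azumaya) argument to find a summand $W_i$ carrying $V$. Two auxiliary grid lemmas then show that the complementary summand $B$ is strictly trivial.

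You instead push the idempotents of an arbitrary splitting $W=W_1\oplus W_2$ through the interleaving to maps $a_i\colon V\to V[2\mu]$. Invertibility of small shifts at grid points turns these into genuine complementary idempotents $\hat a_i\in\operatorname{End}(V)$. Indecomposability then forces $a_2=0$, which kills $\eta^{W_2}_{4\mu}$ directly.

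The steps you flagged all check out:
\begin{itemize}
\item the identity $\widehat{a'[t]\circ a}=\hat a'\hat a$ for $t+t'<\delta$;
\item injectivity of $a\mapsto\hat a$;
\item inserting $e_2[2\mu]$ via $e_2[2\mu]\,\eta^W_{2\mu}\,e_2=\eta^W_{2\mu}\,e_2$.
\end{itemize}
You do not even need \cite{botnan19}, since an indecomposable module has no idempotents other than $0$ and $1$.

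What your route buys:
\begin{itemize}
\item It uses nothing about $W$, so it works for arbitrary persistence modules $W$.
\item It treats every splitting of $W$ at once, rather than the particular $A\oplus B$ produced by a Krull--Schmidt decomposition.
\item It sidesteps the fact that for $W\in\mathbf{qtUsc}_n$, Section~3 only yields a product decomposition (a biproduct only in the observable category), which the appendix uses as a direct sum.
\end{itemize}

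The paper's route, in turn, allows a larger radius. It needs only one round trip $V\to W[r]\to V[2r]$ to be invertible on the grid ($2r$ below the grid spacing). Your idempotent identity composes two round trips, so as written it needs $4r<\delta$.
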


Since the proof is a straightforward generalization from the proof of \cite[Theorem B]{bauer23}, we relegate it to \cref{section:openness-indecomposables}.
We conclude that the indecomposables are not only dense, but also form a residual subset of $\mathbf{bg}_n$.

\begin{proposition}
  Let $n\geq 2$. In the space $\mathbf{bg}_n$, being indecomposable is a generic property.

 \begin{proof}
   Observe that \cref{prop:open-indecomp} shows that there is an open set in $\ourspace{n}$ containing all indecomposables and containing only $\varepsilon$-indecomposables for some $\varepsilon>0$. The intersection with $\mathbf{bg}_n$ becomes dense in $\mathbf{bg}_n$ and remains open in the subspace topology.  
   By shrinking the balls around the indecomposables we get a countable sequence of open and dense sets whose intersection is precisely the set of indecomposables.
 \end{proof}
\end{proposition}

\subsection{Compactness of Degree-Rips Constructions}

Prominent examples of density-sensitive bifiltrations are the Degree-Rips and the Subdivision-Rips filtrations.
In this section we show that certain families of persistence modules arising this way are precompact.
The construction of these bifiltrations can be found in \cite{blumberg2022stability2parameterpersistenthomology}.

\begin{proposition}\label{prop:precompactness-finite-samples}
  Let $X$ be a compact metric space.
  The set of finite samples of $X$ (endowed with the normalized counting measure)
  is precompact under $d_{\mathrm{GHP}}$.


  \begin{proof}
     In \cite[Proposition 8]{miermont2009tessellationsrandommapsarbitrary} it is shown that a set of compact metric probability spaces is precompact under $d_{\mathrm{GHP}}$ if and only if it is precompact under $d_{\mathrm{GH}}$ when we forget the probability measure.
    Now precompactness of the set of finite samples of $X$ under $d_{\mathrm{GH}}$ follows from Gromov's precompactness Theorem \cite{gromov,mgeom}, since the finite samples of $X$ have uniformly bounded diameter and are uniformly totally bounded.
  \end{proof}
\end{proposition}


\begin{proposition}\label{prop:stability-degree-subdiv-rips}
  The Degree-Rips and the Subdivision-Rips bifiltration are stable in the sense that they define Lipschitz maps from the space of finite metric probability spaces endowed with the normalized counting measure to the space of persistence modules, when postcomposed with homology.

  \begin{proof}
    The case of Degree-Rips follows from \cite[Corollary 6.5.2]{scoccola}.
    The stability of the Subdivision-Rips bifiltration was shown by Blumberg and Lesnik  in Theorem 1.6. (iii) of \cite{blumberg2022stability2parameterpersistenthomology}.
  \end{proof}
\end{proposition}

We therefore obtain the following corollary:

\begin{corollary}
  Let $X$ be a compact metric space and let $S$ be the set of finite samples of $X$ endowed with the normalized counting measure.
  Let $F$ denote either the Degree-Rips or the Subdivision-Rips constructions postcomposed with homology with field coefficients.
  Then $F(S) \subseteq \mathbf{qtUsc}_n$ is precompact.
\begin{proof}
  By \cref{prop:stability-degree-subdiv-rips}, the map $F$ is Lipschitz and hence continuous. By \cref{prop:precompactness-finite-samples}, the set $S$ is precompact, and so is the image $F(S)$.
  It remains to be shown that $F(S)$ indeed lands in $\mathbf{qtUsc}_n$, that is, if $Y$ is a finite metric space endowed with the normalized counting measure, then $F(Y)$ is q-tame and upper semicontinuous.
  The fact that it is q-tame is clear since all simplicial complexes used in the construction of $F$ are finite.
  The fact that it is upper semicontinuous is a straightforward consequence of the fact that the construction of $F$ uses non-strict inequalities \cite{blumberg2022stability2parameterpersistenthomology}.
\end{proof}
\end{corollary}

\printbibliography{}

\appendix\section{Openness of Nearly Indecomposables}\label{section:openness-indecomposables}

For the convenience of the reader we give the proof of proposition 4.12. It is completely analogous to the proof of \cite[Theorem B]{bauer23}.
Both of the lemmas we use are also taken from this paper.

\begin{proposition}
  Let $V$ be a regularly locally finitely presentable and indecomposable module.
  Then there is a constant $\mu>0$ so that every $W\in\mathbf{qtUsc}_n$ with $d_I(V,W)<\mu$ is $c\mu$-indecomposable where $c>0$ is fixed. 
  \begin{proof}
    Let $V$ be defined on a grid $\mathbf{P} = \varepsilon\mathbf{Z}^n$.
    Let $d_I(V,W)<\frac{\varepsilon}{2}$.
    Let $f:V\to W[\delta],g:W\to V[\delta]$ be a $\delta$-interleaving with $\delta<\frac{\varepsilon}{2}$.
    We then have the commutative diagram 
    \[\begin{tikzcd}
      V && {V[2\delta]} \\
      & {W[\delta]}
      \arrow["{\eta^V_{2\delta}}", from=1-1, to=1-3]
      \arrow["f"', from=1-1, to=2-2]
      \arrow["{g[\delta]}"', from=2-2, to=1-3]
    \end{tikzcd}\] and by the functoriality of restriction-extensions 
    \[\begin{tikzcd}
      {V_{\mathbf{P}}} && {V[2\delta]_{\mathbf{P}}} \\
      & {W[\delta]_{\mathbf{P}}}
      \arrow["{(\eta^V_{2\delta})_\mathbf{P}}", from=1-1, to=1-3]
      \arrow["{f_{\mathbf{P}}}"', tail, from=1-1, to=2-2]
      \arrow["{g[\delta]_{\mathbf{P}}}"', two heads, from=2-2, to=1-3]
    \end{tikzcd}\] since the shift becomes an isomorphism.
    The short exact sequence 
    \[\begin{tikzcd}
      0 & {\ker g[\delta]_\mathbf{P}} & {W[\delta]_\mathbf{P}} & {V[2\delta]_\mathbf{P}} & 0
      \arrow[from=1-1, to=1-2]
      \arrow[from=1-2, to=1-3]
      \arrow["{g[\delta]}", from=1-3, to=1-4]
      \arrow[from=1-4, to=1-5]
    \end{tikzcd}\] splits via $f\circ (\eta^V_{2\delta})_\mathbb{P}^{-1}$ and so we have \[W[\delta]_\mathbf{P} \cong V_\mathbf{P}\oplus X \cong V\oplus X\] with $X = \ker g[\delta]_\mathbf{P}$.
    There is a decomposition \[W \cong \bigoplus_{i\in I}W_i\] into indecomposables by our results from the third chapter.
    We then have \[W[\delta]_\mathbf{P} \cong \bigoplus_{i\in I}W_{i}[\delta]_{\mathbf{P}}\] and since $V$ is indecomposable there is some $W_i$ with $(W_i[\delta])_{\mathbf{P}} \cong V\oplus Y$.
    If we let $A = W_i$ and $B = \bigoplus_{j\in I\setminus\{i\}} W_j$ we have a decomposition $W = A\oplus B$ and it remains to show that $B$ is strictly trivial for a suitable constant.
    We first show that $B_\mathbf{P}$ is $\sigma$-trivial for $\sigma>\varepsilon$.
    For this observe that $B_\mathbf{P}$ is a direct summand of $X$ so it suffices to consider $X$.
    First we see that by naturality 
    \[\begin{tikzcd}
      {V\oplus X} & {V[\sigma]\oplus X[\sigma]} \\
      {W[\delta]_\mathbf{P}} & {W[\delta]_\mathbf{P}[\sigma]}
      \arrow["{(\eta^V_\sigma,\eta^X_\sigma)}", from=1-1, to=1-2]
      \arrow["\cong"', from=1-1, to=2-1]
      \arrow["\cong", from=1-2, to=2-2]
      \arrow["{\eta_\sigma^{W[\delta]_\mathbf{P}}}"', from=2-1, to=2-2]
    \end{tikzcd}\] commutes for all $\sigma >0$.
    By the interleaving we have that 
    \[\begin{tikzcd}
      {W[\delta]_\mathbf{P}} && {W[3\delta]_\mathbf{P}} \\
      & {V[2\delta]_\mathbf{P}}
      \arrow["{(\eta_{2\delta}^{W[\delta]})_\mathbf{P}}", from=1-1, to=1-3]
      \arrow["{g[\delta]_\mathbf{P}}"', from=1-1, to=2-2]
      \arrow[from=2-2, to=1-3]
    \end{tikzcd}\] commutes, so $(\eta^{W[\delta]}_{2\delta})_{\mathbf{P}}$ vanishes on $X$. We need a lemma:

    \begin{lemma}
      Let $0<\alpha <\beta$ and consider a countable grid $\mathbf{Q}$ with widths in $[\alpha,\beta]$.
      Let $L$ be a module.
      For $r\leq \alpha$ there is a morphism making 
      \[\begin{tikzcd}
        {L_\mathbf{Q}} & {L_{\mathbf{Q}}[\beta]} \\
        {L[r]_\mathbf{Q} = L_{\mathbf{Q}+r}[r]}
        \arrow["{\eta_\beta^{L_\mathbf{Q}}}", from=1-1, to=1-2]
        \arrow["{(\eta^L_r)_\mathbf{Q}}"', from=1-1, to=2-1]
        \arrow[dotted, from=2-1, to=1-2]
      \end{tikzcd}\] commute.
      
      \begin{proof}
        We first define the desired morphism, which is equivalently written as  \[m:L_{\mathbf{Q}+r}\to L_\mathbf{Q}[\beta-r].\]
        For $s\in\mathbf{R}^n$ we have \[L_{\mathbf{Q}+r}(s) = L(\sup\{t+r:t\in\mathbf{Q},t+r\leq s\})\] and \[L_\mathbf{Q}[\beta-r](s) = L_\mathbf{Q}(s+\beta-r) = L(\sup\{t:t\in\mathbf{Q},t\leq s+\beta-r\}).\] 
        We now let $s_0$ be so that $t_0+r = \sup\{t+r:t\in\mathbf{Q},t+r\leq s\}$ so that $L_{\mathbf{Q}+r}(s) = L(t_0+r)$.
        Writing $\mathbf{Q} = (\{t_i\}_{i\in\mathbf{Z}})^n$ we have $t_0 = (t_{i_1},\dots,i_{i_n})$ for suitable indices $i_\bullet$.
        Write $s_1 = (s_{i_1+1},\dots,s_{i_n+1})$.
        Then since our grid has mesh widths in $[\alpha,\beta]$ we get $t_1-\beta\leq t_0$ and so $t_1-\beta+r\leq t_0+r\leq s$ which in turn implies that $t_1\leq s+\beta-r$.
        Moreover, \[t_1\leq \sup\{t:t\in\mathbf{Q},t\leq s+\beta-r\}\] and by $r\leq \alpha$ we obtain $t_0+r\leq t_1$.
        This allows us to consider the composition \[m_s:L_{\mathbf{Q}+r}(s) = L(t_0+r)\to L(t_1)\to L(\sup\{t:t\in\mathbf{Q},t\leq s+\beta-r\}) = L[\beta-r](s)\] given by structure morphisms of $L$.
        Since these are constructed from structure maps only we immediately obtain naturality and commutativity of the diagram.
      \end{proof}
    \end{lemma}

    We apply the lemma to our $\varepsilon$-grid with $r = 2\delta <\varepsilon$ and $L = W[\delta]$ to obtain a factorization 
    \[\begin{tikzcd}
      {W[\delta]_\mathbf{P}} & {W[\delta]_\mathbf{P}[\sigma]} \\
      {W[3\delta]_\mathbf{P}}
      \arrow[from=1-1, to=1-2]
      \arrow[from=1-1, to=2-1]
      \arrow[dotted, from=2-1, to=1-2]
    \end{tikzcd}\] for any $\sigma >\varepsilon$. Thus $\eta^X_\sigma$ vanishes and $X$ is strictly $\sigma$-trivial. We now need another lemma:

    \begin{lemma}
      Let $\alpha,\beta> 0$ and let $\mathbf{Q}$ be a countable grid with widths $\leq \beta$.
      Let $L$ be a module. If $L_\mathbf{Q}$ is (strictly) $\alpha$-trivial then $L$ is (strictly) $(\alpha+\beta)$-trivial.

      \begin{proof}
        We show the non-strict case; the strict case follows analogously.
        We want to show that \[L(s)\to L(s+(\alpha+\beta))\] vanishes for all $s\in\mathbf{R}^n$.
        We can find a minimal $t$ with $s\leq t$. By our assumption on mesh widths we have $t\leq s+\beta$.
        Hence \[t+\alpha\leq s+ (\alpha+\beta).\]
        Write $u = \sup\{v\in\mathbf{Q}:v\leq t+\alpha\}$, then $L(t) = L_{\mathbf{Q}}(t)\to L_{\mathbf{Q}}(t+\alpha) = L(u)$ vanishes since $L_\mathbf{Q}$ is $\alpha$-trivial.
        We must have \[t\leq u\leq t+\alpha\] since $t$ is on $\mathbf{Q}$.
        We can thus conclude that the composition \[L(s)\to L(t)\to L(u)\to L(t+\alpha)\to L(s+(\alpha+\beta))\] vanishes, as desired.  
      \end{proof}
    \end{lemma}

    This shows that $B$ is $\sigma+\varepsilon> 2\varepsilon$-trivial for every $\sigma>\varepsilon$.
    We conclude that with $\mu = \frac{\varepsilon}{2}$ the module $W$ is strictly $6\mu = 3\varepsilon$-trivial since we can choose $\sigma <2\varepsilon$.
  \end{proof}
\end{proposition}

\end{document}